\documentclass[11pt,leqno]{amsart}

\usepackage{amsmath, amscd, amsthm, amssymb, graphics, mathrsfs, setspace, fancyhdr, times, bm, pdfsync, enumitem}
\usepackage[all]{xy}
\usepackage[usenames, dvipsnames, svgnames, table]{xcolor}

\usepackage[draft=false]{hyperref}
\hypersetup{breaklinks=true,colorlinks=true,linkcolor=black,anchorcolor=black,citecolor=black,filecolor=black,menucolor=black,urlcolor=MidnightBlue}
\usepackage{cleveref}

\numberwithin{equation}{section}

\newtheorem{thm}{Theorem}[subsection]
\newtheorem{prop}[thm]{Proposition}
\newtheorem{lm}[thm]{Lemma}
\newtheorem{lem}[thm]{Lemma}
\newtheorem{cor}[thm]{Corollary}

\theoremstyle{remark}
\newtheorem{rem}[thm]{Remark}

\newtheorem{ex}[thm]{Example}

\theoremstyle{definition}
\newtheorem{df}[thm]{Definition}
\newtheorem{defn}[thm]{Definition}
\newtheorem{num}[thm]{}

\numberwithin{equation}{thm}

\newtheorem{thm*}{Theorem}


\setcounter{secnumdepth}{2}
\setcounter{tocdepth}{2}
\allowdisplaybreaks

\DeclareMathOperator{\SH}{SH}

\renewcommand{\AA}{\mathbb A}
\newcommand{\PP}{\mathbb P}
\newcommand{\Proj}{{\mathbb P}}
\newcommand{\HP}{\mathrm{H}\mathbb P}

\newcommand{\GGx}[1]{\mathbb G_{m,#1}}




\newcommand{\E}{\mathbb E}

\newcommand{\MSp}{\mathbf{MSp}}

\newcommand{\KO}{\mathbf{KO}}

\newcommand{\ZZ}{\mathbb{Z}}
\newcommand{\ZZe}{\mathbb{Z}_\epsilon}

\newcommand{\CC}{\mathbb{C}}

\newcommand{\NN}{\mathbb{N}}

\newcommand{\Sp}{\mathrm{Sp}}
\newcommand{\GL}{\mathrm{GL}}
\newcommand{\SL}{\mathrm{SL}}

\newcommand{\A}{\mathcal A}
\newcommand{\Laz}{\mathcal L}
\newcommand{\Bus}{\mathcal B}
\newcommand{\Walt}{\mathcal W}

\newcommand{\MForm}{\mathscr F} 
\newcommand{\GForm}{\mathscr G} 
\newcommand{\FGL}{\mathcal{FGL}}
\newcommand{\tFGL}{2-\mathcal{FGL}}
\newcommand{\FTL}{\mathcal{FTL}}

\newcommand*{\HMW}{\mathbf{H}_{\mathrm{MW}}}


\newcommand{\univ}{\mathrm{univ}}
\newcommand{\ev}{\mathrm{ev}}

\DeclareMathOperator{\GW}{GW}
\DeclareMathOperator{\Spec}{Spec}


\newcommand{\ux}{\underline x}
\newcommand{\uy}{\underline y}
\newcommand{\uc}{\underline c}
\newcommand{\ut}{\underline t}
\newcommand{\ue}{\underline e}

\newcommand{\ualp}{\underline \alpha}

\begin{document}

\title{Formal ternary laws and Buchstaber's 2-groups}

\author{David Coulette}
\address{ENS de Lyon, UMPA, UMR 5669, 46 all\'ee d'Italie, 69364 Lyon Cedex 07, France}
\email{david.coulette@ens-lyon.fr}

\author{Fr\'ed\'eric D\'eglise}
\address{ENS de Lyon, UMPA, UMR 5669, 46 all\'ee d'Italie, 69364 Lyon Cedex 07, France}
\email{frederic.deglise@ens-lyon.fr}
\urladdr{http://perso.ens-lyon.fr/frederic.deglise/}
 
\author{Jean Fasel}
\address{Institut Fourier - UMR 5582, Universit\'e Grenoble-Alpes, CS 40700, 38058 Grenoble Cedex 9, France}
\email{Jean.Fasel@univ-grenoble-alpes.fr}
\urladdr{https://www-fourier.univ-grenoble-alpes.fr/~faselj/}

\author{Jens Hornbostel}
\address{Fakult\"at f\"ur Mathematik und Naturwissenschaften, Bergische Universit\"at Wuppertal, Gau{\ss}str. 20, 42119 Wuppertal, Germany}
\email{hornbostel@math.uni-wuppertal.de}
\urladdr{http://www2.math.uni-wuppertal.de/~hornbost/}

\begin{abstract}
We compare formal ternary laws to Buchstaber's 2-valued formal group laws.
\end{abstract}

\maketitle
\tableofcontents

\section{Introduction}

\subsection{Background}
Formal group laws (or FGL for short) are fundamental in many areas of mathematics.
 In algebraic topology, since Quillen's work on complex cobordism,
 one dimensional commutative formal group laws have been an important tool
 when studying Chern classes for complex oriented cohomology theories $E^*$.
 In \cite{Quillen}, Quillen discovered that the first Chern class of a tensor product
 of line bundles is computed by a formal group law canonically associated with
 the $E^*$. Moreover, he showed that the formal group law associated with complex
 cobordism is the universal one, introduced by Lazard in \cite{Lazard55}.
 The work of Quillen has been hugely influential in algebraic topology.

 Among many of his pursuers, Buchstaber has intiatied a program for studying symplectic cobordism,
 via the realization of Pontryagin classes of sympectic bundles in complex cobordism
 (\cite{BN,Bu75, Bu76, Bu78}, see also \cite{AL75}).
 Most notably, he coined down the notion of (commutative one-dimensional) \emph{2-valued formal group}
 over a base ring $R$,
 formed by certain pairs $(F_1(x,y),F_2(x,y))$ of formal power series in two variables with coefficients in $R$
 (see Example \ref{ex:FGL&2FGL} for more details).
 In fact, he showed that the Pontryagin polynomial of a tensor product of the
 two copies of the universal symplectic bundle gives rise (after taking a ``square root'')
 to a 2-valued formal group. Moreover, the 2-valued group obtained in this way
 satisfies a special condition, that Buchstaber calls ``type I'' (see \Cref{df:Buchstaber_type}),
 and he finally shows that one obtains the universal 2-valued formal group of type I, at least after
 inverting $2$. This is therefore an analogue of Quillen's identification of the
 formal group law of complex cobordism with the universal one.

\medskip

More recently, Voevodsky opened a door 
 between algebraic topology and algebraic geometry, introducing among other things algebraic cobordism,
 algebraic Morava K-theory and the motivic Steenrod algebra.
 After a formalization of \emph{(algebraic) oriented theories} in algebraic geometry by Panin and Smirnov,
 Levine and Morel \cite{LM07} have extended the work of Quillen on cobordism and formal group laws,
 within the setting of oriented theories.
 In particular, they show that the algebraic cobordism of any characteristic $0$ field is isomorphic to the Lazard ring,
 and the formal group law derived from Chern classes in algebraic cobordism
 agrees with the universal formal group law: \emph{op. cit.} Th. 1.2.6.\footnote{This was later
 generalized by Marc Hoyois in \cite[Proposition 8.2]{Hoyois} for characteristic $p>0$ fields after inverting $p$,
 and by Markus Spitzweck in \cite[Theorem 6.7]{SpitCobord} for
 a local regular ring after inverting the characteristic exponent of its residue field.}
 The path opened by these mathematicians has become particularly fecund and inspired
 many new results and computations among which:
 Riemann-Roch formulas (\cite{PaninRR}, \cite{Smirnov}, \cite{Deg18}),
 motivic Landweber existence theorem (\cite{NSO}),
 Schubert calculus (\cite{HK11}, \cite{CPZ13}, \cite{CZZ}),
 operations on oriented theories (\cite{VishikOper}, \cite{MerkurjevOper}),
 bivariant formalism (\cite{SchYok}, \cite{Deg16}).

However, there are important theories in motivic homotopy,
 such as Chow-Witt groups (\cite{Barge00}, \cite{FaselCHW}) and higher Grothendieck-Witt groups
 (encompassing both orthogonal and hermitian K-theories, see e.g. \cite{Hornb}, \cite{ST} or \cite{Schlichting17})
 which are not oriented, while they possess a well-developed theory of characteristic classes
 (Euler and Thom classes).
 Motivated by these examples, Panin and Walter introduced a refinement of orientation theory in
 motivic homotopy theory in a series of papers \cite{Panin10pred, PWcobord, Panin19, Panin19b}.
 The idea of their theory is to restrict the existence of Thom classes to particular
 kind of vector bundles, the most important examples being the symplectic bundles
 ($\Sp$-orientations) and oriented bundles ($\SL$-orientations).
 The first case seems the most successful analogue of usual orientation (now $\GL$-orientation),
 as it allows to get a theory of \emph{Borel classes}, completely analogous to that
 of Chern classes.
 Borel classes are the motivic counter-part of Pontryagin classes in topology.

In the same spirit as Buchstaber's 2-valued groups,
 the second and third authors have developed in \cite{DF21} an initial
 idea of Walter which proposes to define an analogue for Borel classes of
 the formal group laws associated with Chern classes. This is the theory
 of \emph{formal ternary laws}. The ternary comes from the basic fact
 that a double tensor products of symplectic bundles is not symplectic, whereas
 a triple tensor product is. Therefore, formal ternary laws arise by looking at the
 Borel classes of triple tensor products of symplectic bundles of (minimal) ranks $2$.
 As such a triple tensor product is of rank $8$, there are $4$ relevant Borel classes
 which lead in fact to four power series in three variables
 (see \textsection\ref{sec:df_FTL}).

One interesting features of the theory is that one gets series with coefficients
 in the Grothendieck-Witt ring $\GW(\ZZ)$, rather than just $\ZZ$ in the classical case.
 Formal ternary laws are therefore more complicated than formal group laws,
 but still seem to play a classification role for motivic ring spectra analogous
 to the latter for usual ring spectra. In fact, the second and third authors
 successfully in \cite{DF21} computed the formal ternary law
 associated with Chow-Witt groups
 (and its higher analogue, the MW-motivic ring spectrum, \cite{BCDFO21})
 while that of Grothendieck-Witt groups was recently computed in \cite{FH21}
 -- see Example \ref{examples:ftl} for both cases.
 According to the nature of these theories, the corresponding formal ternary laws
 are the analogue of the additive and multiplicative formal group laws respectively.
 As in topology, these results are useful for computing stable operations.
 In particular the second and third authors built the so-called Borel character,
 analogue of the Chern character, which corresponds to the exponential isomorphism
 from the multiplicative to the additive formal group laws with rational coefficients.

\subsection{Main results}
The aim of this paper is first to develop the algebraic part of the theory
 of formal ternary laws with the aim to relate it with Buchstaber's
 2-valued formal groups.

In order to formulate this comparison, we introduce a generalization of Buchstaber's theory,
 the category $\GForm_{n,d}(R)$ of \emph{$n$-valued $d$-ary formal group laws} over a ring $R$,
 or $(n,d)$-groups for short (see \Cref{df:nd_groups}).
 The justification for this extension is
 that while 2-groups are $2$-valued secondary formal group laws,
 formal ternary laws not only are $4$-valued but also depends on three formal variables.
 In this general framework we introduce a natural notion of degree
 (\Cref{df:degree_nd_gps}), based on the analogy with homology.
 The minimal and maximal degrees of the coefficients of an $(n,d)$-group roughly measure its complexity.
 As an example, we could mention that there is only one formal group law with minimal and maximal degrees both $0$, the \emph{additive} one,
 formal group law of degree $1$ are exactly the \emph{multuplicative ones}, and there are no
 formal group of finite degree distinct from $0$ and $1$.

We also introduce the analogue of the Lazard ring (\Cref{prop:universal_nd-group})
 for $(n,d)$-groups.
 In particular, we let $\Bus$ be the ring which classifies the 2-groups, and $\Bus^0$ the quotient ring
 of $2$-groups whose coefficients have degree $0$. Then a basic but fundamental observation of Buchstaber
 can be restated (\Cref{prop:Bus_gradings}) by saying that:
$$
\Bus^0 \simeq \ZZ[\gamma]/\big((\gamma+2)(\gamma-2)^2\big).
$$
This is a fundamental difference with the structure of the Lazard ring, as this ring is in particular
 non-reduced.

Next we introduce after Walter the notion of \emph{$4$-valued formal ternary laws} (\emph{i.e.} FTL,
 see \Cref{df:FTL}),
 which is an algebraization of the notion appearing in \cite[2.3.2]{DF21}. Comparing
 to \emph{loc. cit.} and the original axioms of Walter, the novelty here is the so-called
 $\epsilon$-linearity axiom.
 We refer the reader to \Cref{num:geometricFTL} for the fact that these FTL
 are naturally attached to $\Sp$-oriented cohomology theories.\footnote{This improves
 the results of \emph{loc. cit.} which treated only the case
 of $\SL$-oriented theories.}

Note that FTL are not exactly $(4,3)$-groups: in fact, they have a \emph{quadratic} nature
 which is reflected by the fact that we work over the Grothendieck-Witt ring of $\ZZ$,
 $\GW(\ZZ) \simeq \ZZe=\ZZ[\epsilon]/(\epsilon^2-1)$ rather than just $\ZZ$ (see \Cref{rem:FTL_basics}).
 However we can introduce the classifying ring $\Walt$ of FTL, that we call
 the \emph{Walter ring}. Our first theorem is a classification result:
\begin{thm*}(see \Cref{thm:compute_universal_FTL})\label{thmi:compute_universal_FTL}
After inverting $2$, there are only two FTLs of degree $\leq 0$.
 One of this law is the FTL associated with Chow-Witt groups (see \cite{DF21}).
\end{thm*}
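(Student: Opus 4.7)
The plan is to reduce the classification to a finite-dimensional computation inside the Walter ring $\Walt$ after inverting $2$. By definition of the degree filtration, only finitely many coefficients of the four power series defining an FTL can have degree $\leq 0$, so I would first enumerate the surviving monomials in $x,y,z$ for each of the four Borel power series and write their coefficients as generic elements $a+b\epsilon \in \ZZe[1/2]$. This turns the statement into a finite system of polynomial equations in finitely many unknowns over $\ZZ[1/2]$, after which one has to prove that the system has exactly two solutions and match one with the Chow-Witt FTL.

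I would then impose the FTL axioms one at a time and in increasing order of difficulty. First, the full symmetry in the three variables $x,y,z$, since an FTL encodes Borel classes of a triple tensor product of rank-$2$ symplectic bundles; this already drastically reduces the number of independent degree-$\leq 0$ parameters. Second, the normalization axioms obtained by setting one variable to $0$, which collapse the FTL to the formal group law governing binary Borel classes and pin down the degree-zero part against previously computed data. Third, the new $\epsilon$-linearity axiom of \Cref{df:FTL}, which is the novelty of the paper and sharply constrains the $\epsilon$-components. Finally the compatibility encoding that the four series are the elementary symmetric functions in the Borel roots of the triple tensor product. After inverting $2$, the idempotent decomposition $\ZZe[1/2] \simeq \ZZ[1/2] \times \ZZ[1/2]$ induced by the orthogonal idempotents $(1\pm\epsilon)/2$ splits the entire system into two decoupled subsystems, each of which I expect to admit at most one solution in degree $\leq 0$. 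The matching step then identifies one of them with the FTL of Chow-Witt groups from \cite{DF21} essentially by construction, while the other appears as its natural $\epsilon$-twin.

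The hardest step will be translating the elementary-symmetric-function compatibility into explicit low-degree relations: this is combinatorially heavy, and it is easy to miss hidden syzygies or to over-count equations that are already consequences of the symmetry and normalization axioms. I expect this step to be verified with computer algebra, even though the end result collapses to only two solutions. The role of inverting $2$ is indispensable throughout: beyond splitting the $\epsilon$-action, it eliminates the $2$-torsion phenomena already visible in $\Bus^0 \simeq \ZZ[\gamma]/\big((\gamma+2)(\gamma-2)^2\big)$, which otherwise would presumably produce additional exotic solutions not of interest here.
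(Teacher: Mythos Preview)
Your overall strategy---reduce to finitely many coefficients, impose the axioms as a polynomial system, solve with computer algebra, and exploit the idempotent splitting of $\ZZe[1/2]$---is exactly what the paper does (see the proof of \Cref{thm:compute_universal_FTL} and the Sage computation in the appendix). But your list of axioms is garbled in a way that, taken literally, would not produce the result.

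The decisive constraint is the \emph{associativity} axiom $F_t(F_t(x,y,z),u,v)=F_t(x,F_t(y,z,u),v)$, and you never name it. This is the axiom whose relations are so numerous and intricate that the paper resorts to a Groebner basis computation; it is precisely what pins down $a^3_{111}$ via $(a^3_{111})^2=832-768\epsilon$ and $(a^3_{111}-40)(\epsilon-1)=0$. Your fourth item, ``the compatibility encoding that the four series are the elementary symmetric functions in the Borel roots of the triple tensor product,'' is not an axiom of an FTL: the four series $F_1,\ldots,F_4$ are not constrained to be elementary symmetric functions of anything. Similarly, setting one variable to $0$ does not collapse the FTL to a ``formal group law governing binary Borel classes''; the neutral-element axiom says $F_t(x,0,0)=(1+xt)^2(1-\epsilon xt)^2$, which fixes many coefficients outright but introduces no binary law. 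You also omit the weak-neutral-element axiom $F_4(x,x,0)=0$.

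Finally, your expectation that each idempotent component has at most one solution is wrong and would mislead you at the last step: the $\epsilon=-1$ component has a unique solution ($a^3_{111}=40$), whereas the $\epsilon=1$ component has two ($a^3_{111}=\pm 8$). The two global FTLs are $a^3_{111}=8(2-3\epsilon)$ and $a^3_{111}=8(3-2\epsilon)$, so the dichotomy lives entirely on the $\epsilon=1$ side.
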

Therefore, the FTL associated with Chow-Witt groups is the analogue of the additive FTL.
 See \Cref{thm:compute_universal_FTL} for a more precise version in terms of the Walter ring
 and \Cref{rem:compute_universal_FTL} for further explanations.

Formal ternary laws have an extremely high computational complexity,
 due to the associativity axiom. More precisely, the Walter ring is naturally defined
 as a polynomial algebra with an infinite number of variables modulo an ideal $\mathcal R_{FTL}$,
 corresponding to the axioms (\Cref{prop:universal_FTL}).
 Nonetheless, this ideal is very difficult to compute.
 Therefore the preceding theorem could only be proved with the help of a computer.
 More generally, we developed a sage implementation which computes
 the Grobner basis of the ideal $\mathcal R_{FTL}$, for a possible range of degrees.
 See \Cref{num:sage} for more details.
%
%
%
%
%
%

The main result of this paper is the following comparison statement below between the following three categories over an arbitrary
commutative ring $R$:
\begin{itemize}
\item $\FGL(R)=\GForm_{1,2}(R)$ the category of formal group laws.
\item ($\tFGL)_I(R)\subset \GForm_{2,2}(R)$ Buchstaber's category of 2-groups of the first type.
\item $\FTL(R)=$ the category of formal ternary laws.
\end{itemize}
\begin{thm*}\label{thmi:comparison}
There is a functor $C: (\tFGL)_I \rightarrow \FTL$ which has a concrete interpretation in terms of characteristic classes. Composing it with the functor $N: \FGL \rightarrow (\tFGL)_I$ from Proposition \ref{prop:functorN} yields a functor which maps the FGL for Chow groups to the oriented part (set $\epsilon=-1$) of the FTL for Chow-Witt groups.
\end{thm*}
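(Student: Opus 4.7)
The plan is to construct $C$ by a splitting-principle recipe that mirrors the geometric origin of an FTL attached to an $\Sp$-oriented cohomology theory (see \Cref{num:geometricFTL}). Given $(F_1, F_2) \in (\tFGL)_I(R)$, the type I hypothesis provides, after a faithfully flat base change if necessary, two formal Borel roots $\xi_1, \xi_2$ with $\xi_1 + \xi_2 = F_1(x,y)$ and $\xi_1 \xi_2 = F_2(x,y)$, realized geometrically as the Borel classes of the rank-$4$ bundle $E_1 \otimes E_2$ attached to two rank-$2$ symplectic bundles $E_1, E_2$ of Borel classes $x, y$. Iterating this construction, I define four formal three-variable roots
\[
\theta_{ij}(x,y,z) \;=\; \xi_i\bigl(\xi_j(x,y),\, z\bigr), \qquad i, j \in \{1, 2\},
\]
and take $\Phi_1(x,y,z), \ldots, \Phi_4(x,y,z)$ to be their four elementary symmetric functions. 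By construction these are symmetric in the $\theta_{ij}$ and descend to $R \otimes_\ZZ \ZZe$, yielding the candidate FTL $C(F_1, F_2) = (\Phi_1, \Phi_2, \Phi_3, \Phi_4)$; the ``concrete interpretation'' in the statement is precisely that $C(F_1, F_2)$ records the four Borel classes of the triple tensor product $E_1 \otimes E_2 \otimes E_3$.

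The next step is verification of the FTL axioms of \Cref{df:FTL}. Symmetry in $y$ and $z$ follows from commutativity of the 2-group law (applied inside $\xi_j(y,z)$ in the alternative bracketing), while symmetry in $x$ and $y$ follows from the $\xi_i$-permutation invariance of the multi-set $\{\theta_{ij}\}$. The low-order vanishing conditions come directly from those of $F_1$ and $F_2$. Associativity of the FTL reduces to a multi-set compatibility between the bracketings $(E_1 \otimes E_2) \otimes E_3$ and $E_1 \otimes (E_2 \otimes E_3)$ at the level of the $\theta_{ij}$, which is a formal consequence of the associativity axiom of $(F_1, F_2)$. The delicate axiom is the novel $\epsilon$-linearity axiom, absent from \cite{DF21}: I would verify it by using the formal discriminant $\xi_1 - \xi_2$, whose square descends to $R$, and comparing with the description of $\epsilon \in \GW(\ZZ)$ as a Borel-type root over $\HP^\infty$.

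For the final statement, I would unwind the composition $C \circ N$ on the additive FGL $F(x,y) = x + y$ classifying Chow groups. The functor $N$ from \Cref{prop:functorN} outputs the 2-group with $F_1(x,y) = 2(x+y)$ and $F_2(x,y) = (x-y)^2$, obtained by squaring the four classical Chern roots $\pm \sqrt{x} \pm \sqrt{y}$. Then $C$ produces the four symmetric functions in $\{(\pm \sqrt{x} \pm \sqrt{y} \pm \sqrt{z})^2\}$, which expand to explicit symmetric polynomials in $x, y, z$. A direct comparison with the four power series of the Chow-Witt FTL computed in \cite{DF21}, specialized at $\epsilon = -1$ so as to kill the Witt-theoretic part, yields the asserted equality.

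The main obstacle will be the $\epsilon$-linearity axiom: it has no counterpart on the 2-group side and requires making explicit how the discriminant extracted from the type I structure encodes the $\GW(\ZZ)$-module data carried by an FTL; this is the precise point at which a $4$-valued ternary law genuinely refines a $(4, 3)$-group. A secondary difficulty is controlling the combinatorial complexity of the four polynomials $\Phi_k$, for which the sage computation of the ideal $\mathcal R_{FTL}$ described in \Cref{num:sage} provides a useful sanity check on low-degree terms.
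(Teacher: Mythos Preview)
Your construction of $C$ via iterated substitution of formal roots is exactly the paper's definition $C_t(F)(x,y,z):=F_t(F_t(x,y),z)$, and your verifications of symmetry and associativity, as well as the final explicit computation identifying $C\circ N$ of the additive FGL with the $\epsilon=-1$ specialization of the Chow--Witt FTL, follow the same route as Section~4.

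However, you have misidentified both the content of the type~I hypothesis and which FTL axiom is delicate. Type~I is \emph{not} a splitting condition: every $(2,2)$-series acquires formal roots after a finite extension by the algebraic splitting principle (\Cref{num:alg_split_pple}), irrespective of type. The type~I condition is the numerical constraint $a^2_{11}=-2$, equivalently $F_2(x,x)=0$ (\Cref{df:Buchstaber_type}). Conversely, the $\epsilon$-linearity axiom is \emph{trivial} here, not delicate: the source $(\tFGL)_I(R)$ consists of $2$-groups over an ordinary ring $R$, and $C$ lands in oriented FTLs over $R$ viewed as a $\ZZe$-algebra via $\epsilon\mapsto -1$. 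With $\epsilon=-1$ the relation $F_t(-\epsilon x,y,z)=F_{-\epsilon t}(x,y,z)$ is vacuous; there is no descent to $R\otimes_\ZZ\ZZe$ and no role for the discriminant $\xi_1-\xi_2$.

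The axiom that genuinely uses type~I is the \emph{weak neutral element} $C_4(F)(x,x,0)=0$, which you subsumed under ``low-order vanishing''. In the paper (\Cref{lem:2FGLto4FTL}): from $F_2(x,x)=0$ one has $F_t(x,x)=1+F_1(x,x)t$, whence
\[
C_t(F)(x,x,0)=F_t(F_t(x,x),0)=F_t(F_1(x,x),0)=(1+F_1(x,x)t)^2,
\]
so $C_3(F)(x,x,0)=C_4(F)(x,x,0)=0$. For a type~II law, say $F_t=(1+F(x,y)t)^2$, one gets $C_4(F)(x,x,0)=F_2(x,x)^2\neq 0$ and the axiom fails. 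This is the sole place the restriction to $(\tFGL)_I$ is essential, and your proposal does not isolate it.
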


As a consequence of this theorem, it is in principle possible to determine all the FTL associated to oriented theories, including algebraic cobordism. It also gives an answer to the obvious question to know if FTL can be described in terms of more basic ingredients such as 2-valued formal groups \`a la Buchstaber. The answer is negative in general (e.g. \Cref{thm:HMWisnot2fgl} and the subsequent discussion). Concretely, this means that in general the Borel classes of a triple product of symplectic bundles can't be computed using hypothetic characteristic classes (satisfying Buchstaber's axioms) attached to a product of two symplectic bundles. 

To conclude, let us stress some consequences of the universality of the Walter ring $\Walt$. This discussion can be taken as a philosophical explanation of the results of the paper. If $\E$ is an Sp-oriented motivic ring spectrum over a field $k$, then there is a ring homomorphism
\[
\Walt\to \E^{2*,*}(\Spec k)
\]
defined by mapping the generators of the left-hand side to the corresponding coefficient in the formal ternary law attached to $\E$. In the particular case of $\MSp$, recall that there is at present no complete computation of $\MSp^{2*,*}(\Spec k)$, as well as of the homotopy groups of its topological counterpart $MSp$. Using complex realization, we obtain a string of ring homomorphisms
\[
\Walt\to \MSp^{2*,*}(\Spec {\mathbb{C}})\to \pi_{-2*}(MSp)
\]
which factors through a string of ring homomorphisms
\begin{equation}\label{eqn:complexrealization}
\Walt/(\epsilon+1)\to \MSp^{2*,*}(\Spec {\mathbb{C}})\to \pi_{-2*}(MSp)
\end{equation}
(see Remark \ref{rem:topologicalMsp} for more details). Using real realization instead, we obtain (using e.g. \cite[Corollary 4.7]{BH}) a sequence
\begin{equation}\label{eqn:realrealization}
\Walt/(\epsilon-1)\to \MSp^{2*,*}(\Spec {\mathbb{R}})\to \pi_{-*}(MU)
\end{equation}
and the ring $\Walt$ can then be thought of as providing a (possibly partial) set of generators and relations for both $\pi_{-2*}(MSp)$ and $\pi_{-*}(MU)$. At the moment, we are not able to perform integral computations in the Walter ring, and we leave this question to further work. After inverting $2$, the computations become much easier. For instance, Section \ref{sec:2inverted} shows that 
\[
(\Walt^{\leq 2}/\langle \epsilon-1\rangle)\otimes \ZZ[1/2]\simeq \ZZ[1/2][a^3_{111},a^1_{111}]/\langle (a^3_{111}-8)(a^3_{111}+8)\rangle
\]
which fits well with the computations of $\pi_{-*}(MU)\otimes \ZZ[1/2]$ (see Remark \ref{rem:compute_universal_FTL} for a tentative explanation of the relation $(a^3_{111}-8)(a^3_{111}+8)=0$). At present, we do not know if our system of relations in $\Walt$ is complete, i.e. that the ring homomorphism $\Walt\to  \MSp^{2*,*}(\Spec k)$ is an isomorphism for a reasonable field $k$. We hope to explore this question in future work.

\subsection{Plan of the paper}

In Section 2, we both recall and extend Buchstaber's theory of multi-valued series,
 by introducing possibly more formal variables leading to $(n,d)$-series and $(n,d)$-groups
 (resp. Def. \ref{df:multivalued} and \ref{df:nd_groups}).
 The main technical point is the notion of substitution for these generalized series
 (\ref{df:substitution}). We then recall Buchstaber's classification of $2$-FGL in our language.

Section 3 concerns the algebraic notion of formal ternary laws,
 based on the language of $(n,d)$-series in Section 1.
 We state our classification result (Theorem \ref{thmi:compute_universal_FTL})
 for low degree FTL, and give examples coming from motivic homotopy.

In Section 4, we establish in Theorem \ref{thmi:comparison} the comparison between FGL, $2$-FGL and FTL as stated above.

Finally in Section 5, we give the main steps for constructing the canonical FTL
 associated to an $\Sp$-oriented motivic ring spectrum.

\subsection*{Acknowledgments}
The first two authors thank David Monniaux and Bruno Salvy for their interest and help
 in the computational aspects of this paper.
 The third author wishes to thank Michel Brion and Adrien Dubouloz for useful conversations around the proof of Theorem \ref{thm:epsilonlinearity}, and Oliver R\"ondigs for discussions on faster than light travel. The authors wish to thank Achim Krause for pointing out he work of Kochman on the coefficient ring $MSp_*$ in topology. The last author's research was conducted in the framework of the research training group GRK 2240: {\em Algebro-geometric Methods in Algebra, Arithmetic and Topology}.




\section{On Buchstaber's multi-valued series and formal group laws}\label{sec:2FGL}

\subsection{Multivalued series}

The following definition is a variation on Buchstaber's algebraic theory
 of multi-valued power series, compare  \cite[section 1]{Bu75}.
\begin{df}\label{df:multivalued}
Let $R$ be a ring, and $n,d>0$ be integers.
 We let $\ux=(x_1,\hdots,x_d)$ be formal variables.
 An $n$-valued $d$-dimensional (power) series with coefficients in $R$, or \emph{$(n,d)$-series over $R$} for short,
 will be a polynomial
 $F_t(\ux)$ in $R[[\ux]][t]$ of degree $n$ and constant term $1$. We use the generic notation:
\begin{equation}\label{eq:power_generic_not1}
F_t(\ux)=1+F_1(\ux)t+\hdots+F_n(\ux)t^n.
\end{equation}
We will also consider a different way of writing this type of polynomials, that we call the
 \emph{Buchstaber form}:
$$
\tilde F_t(\ux):=t^nF_{(-t^{-1})}(\ux)=t^n-F_1(\ux)t^{n-1}+\hdots+(-1)^nF_n(\ux).
$$
We write $\MForm_{n,d}(R)$ the corresponding $R$-module.

Let $\varphi:R \rightarrow R'$ be a morphism of rings.
 The obvious corestriction of scalars functor $\varphi_*:R[[\ux]][t] \rightarrow R'[[\ux]][t]$ induces a map:
$$
\varphi_*:\MForm_{n,d}(R) \rightarrow \MForm_{n,d}(R'),
$$
which we will still call the \emph{corestriction} (along $R'/R$).
\end{df}

\begin{rem}\label{rem:nd-series}
\begin{enumerate}
\item Obviously, $\MForm_{n,d}(R)$ is isomorphic to the $R$-module $R[[\ux]]^n$
 and the $(n,d)$-series $F_t(\ux)$ is determined by the $n$-tuple of power series in the
 variables $\ux$: $F_1(\ux), \hdots, F_n(\ux)$.

In particular, $\MForm_{1,d}$ is isomorphic to the $R$-module of power series in $\ux$.
 Given such a power series $\varphi(\ux)$, we will write $\varphi_t(\ux)=1+\varphi(\ux)t$
 the corresponding object of $\MForm_{1,n}$.
\item The notation \eqref{eq:power_generic_not1} is motivated by the theory of characteristic classes,
 and especially total classes (Chern, Pontryagin, Borel, etc.).
 The Buchstaber form $\tilde F_t(\ux)$, taken from the convention of \cite{Bu75},
 is useful when reasoning in terms of roots. We keep the two possible conventions to help the reader
 compare our notations with that of Buchstaber, but we believe that the notation \eqref{eq:power_generic_not1}
 is better suited when dealing with examples from motivic homotopy theory which are our motivation.
\end{enumerate}
\end{rem}

\begin{ex}\label{ex:total_chern}
Let $\E$ be an oriented motivic ring spectrum over a scheme $S$ and let $\PP_S^{\infty}$ be the infinite projective space (which is the colimit of $\PP_S^n$ for $n$ increasing). Let $\E^{**}=\E^{**}(S)$  be the ring of coefficients.

Then it is well-known that $\E^{**}(\PP^{\infty}_S)=\E^{**}[[x]]$,
 where $x$ is the first Chern class of the universal line bundle on $\PP^{\infty}_S$. Note further that $x$ is of cohomoloical degree $(2,1)$. More generally, we have $\E^{*,*}((\PP^{\infty}_S)^{\times d})=\E^{**}[[x_1,\ldots,x_d]]$ for any $d\geq 1$, where $x_i$ are the first Chern classes of the corresponding line bundles, sitting in degree $(2,1)$. Let now $G_{d,\infty}$ be the Grassmannian of $d$-dimensional sub-vector bundles in an infinite dimensional vector bundle over $S$,
 and let $V_{d,\infty}$ be the universal $d$-dimensional vector bundle over $\mathrm{Gr}_{d,\infty}$. The sum of the universal line bundles yields a morphism 
\[
 (\PP^{\infty}_S)^{\times d}\to G_{d,\infty}
\]
and we may consider the pull-back of the total Chern class (with coefficients in $\E^{*,*}(G_{d,\infty})$)
\[
c_t(V):=1+\sum_{i=1}^d c_i(V)t^i,
\]
along this morphism. Each $ c_i(V)$ is a homogeneous power series of degree $i$ in the variables $x_1,\ldots,x_d$ and we obtain an element of $\MForm_{d,d}(\E^{*,*})$. More precisely, we have $c_i(V)=e_i(x_1,\ldots,x_d)$, the elementary symmetric polynomial of degree $i$ in the variables $\ux$. Further, consider the map
\[
 (\PP^{\infty}_S)^{\times 2}\to  \PP_S^{\infty}
\]
classifying the tensor product of the two universal line bundles. Pulling-back the Chern class $1+xt$ to $(\PP^{\infty}_S)^{\times 2}$, we obtain a polynomial in $\MForm_{1,2}(\E^{*,*})$ of the form
\[
1+F(x_1,x_2)t
\] 
where $F(x_1,x_2)$ is the first Chern class of the product. This is the so-called formal group law of $\E^{*,*}$.
\end{ex}

\begin{num}
We can generically write an $(n,d)$-series $F_t(\ux)$ with coefficients in $R$ as
\begin{equation}\label{eq:generic_writing}
F_t(\ux)=1+\sum_ {\ualp \in \NN^n, \\ l=1,\hdots,n} a_{\ualp}^l\ux^{\ualp}t^l,
\end{equation}
for coefficients $a_{\ualp}^l \in R$. The preceding example leads us to consider a grading on $(n,d)$-series,
 defined by the convention that the formal variables $x_1,...,x_d$ have degree $+1$ and the indeterminate $t$ has degree $-1$.
\end{num}
\begin{df}\label{df:degree_nd_gps}
Consider the above notation. We define the degree (resp. valuation) of the $(n,d)$-series $F_t(\ux)$ as the integer $\delta$ (resp. $\nu$)
 such that:
\begin{align*}
&\delta=\sup(A), \text{resp. } \nu=\inf(A), \\
&A=\{|\ualp|-l \mid \ualp \in \NN^n, 1 \leq l \leq n, a_{\ualp}^l \neq 0 \}.
\end{align*}
In particular, we view $\MForm_{n,d}(R)$ as a $\ZZ$-graded $R$-module.
%
\end{df}
By definition, $\delta \geq \nu\geq -n$. The interval $[\nu,\delta]$ measures the
 complexity of the $(n,d)$-series $F_t(\ux)$.

\begin{ex}
In Example \ref{ex:total_chern}, one notes that the Chern classes $c_l(V)$ are of bidegree $(2l,l)=l(2,1)$.
 In particular, if one writes:
$$
c_l(V)=\sum_{\ualp} a_{\ualp}^l\uc^{\ualp},
$$
the (cohomological) bidegree of $a_{\ualp}^l$ is $(l-|\ualp|)(2,1)$, and this legitimates the choice of grading
 for $\uc^{\ualp}t^l$ to be $|\ualp|-l$. In that particular case, we have $\nu=\delta=0$.
\end{ex}

\begin{rem}
There is an obvious multiplication map:
$$
\MForm_{n,d}(R) \times \MForm_{m,e}(R) \rightarrow \MForm_{n+m,d+e}, F_t(\ux),G_t(\uy) \mapsto F_t(\ux).F_t(\uy)
$$
where the product is taken in the ring $R[[\ux,\uy]][t]$. Whereas this product is quite natural,
 we will rather use the substitution product of \Cref{df:substitution} below.
\end{rem}

\begin{df}\label{df:roots_nd-series}
Consider an $(n,d)$-series $F_t(\ux) \in \MForm_{n,d}(R)$.
 A \emph{root} of $F_t(\ux)$ will be a root of $\tilde F_t(\ux)$, as a polynomial in $t$.
 We say that $F_t(\ux)$ is \emph{split} if $\tilde F_t(\ux)$ is split, as a polynomial in $t$.
In the latter case, we will generically denote by $F^{[i]}$, $i=1,...,n$ the corresponding roots.
 It follows that
$$
F_t(\ux)=\prod_{i=1}^n (1+F^{[i]}t).
$$
\end{df}

\begin{ex} The previous definition is motivated by the splitting principle. As in Example \ref{ex:total_chern}, we have 
\[
c_t(V):=1+\sum_{i=1}^d c_i(V)t^i=\prod_{i=1}^d(1+x_it)
\]
In other words, the $n$-dimensional $d$-valued series $c_t(V)$ is split in $\E^{*,*}[[\ux]][t]$ -- \emph{i.e.}
 the polynomial $\tilde c_t(V) \in \E^{**}[[\uc]][t]$ splits.
\end{ex}

\begin{rem}
Beware that if $R$ is not integral, the roots of a split series $F_t(\ux)$ are not unique
 in general.
\end{rem}

\begin{num}\textit{The algebraic splitting principle}.\label{num:alg_split_pple}
Consider an $(n,d)$-series $F_t(\ux)$ with coefficients in $R$.
 Note that there always exists a finite ring extension $R[[\ux]] \rightarrow R^\prime$
 such that $\tilde F_x(\ux)$ splits in $R^\prime$. In fact, there is a universal one given by the formula
 \[
R^\prime=R[[\ux]][\ut]/(e_i(\ut)-F_i(\ux), i=1,\hdots,n)
 \] 
 where $\ut=(t_1,...,t_d)$, $e_i(\ut)$ is the $i$-th elementary symmetric polynomial in the variables
 $\ut$. Given a finite extension $A/R[[\ux]]$ in which $\tilde F_x(\ux)$ splits, we denote by $F^{[i]}$ (or sometimes $F_A^{[i]}$ if $A$ is important) its roots so that we have:
$$
F_t(\ux)=\prod_{i=1}^n (1+F^{[i]}t),
$$
and call them the \emph{formal roots} of $F_t(\ux)$ in $A$. There is a bijection between the set of pairs $(A,F^{[i]}_A)$ (with $A$ finite over $R[[\ux]]$) and the set of finite homomorphisms of $R[[\ux]]$-algebras $R^\prime\to A$. 
\end{num}

\begin{num}\textit{Substitutions}.
We will say that a power series $\varphi(x)$ in one variable is \emph{composable} if $\varphi(0)=0$. Then for any power series $\psi(x)$, the composite $\psi(\varphi(x))$
 is well-defined. Similar considerations hold when dealing with more variables and we now extend them in the context of multivalued series.
 
Let then $F_t(\ux) \in \MForm_{n,d}$ with $\ux=(x_1,\hdots,x_d)$.
 We fix an integer $1 \leq i \leq d$ and consider the $(d-1)$-tuple of variables
 $\ux'=(x_1,...,x_{i-1},x_{i+1},...,x_d)$.
  We introduce another $r$-tuple of variables $\uy=(y_1,...,y_r)$ and consider the expression in $R[[\ux',\uy]][t]$:
$$
\phi_i(\ux',\uy)=\prod_{l=1}^r F_t(x_1,...,x_{i-1},y_l,x_{i+1},...,x_d).
$$
Note that this is a polynomial in $t$ of degree $rn$ and with constant term $1$.
 Moreover, $\phi_i(\ux',\uy)$ is invariant under the action of the group $\mathfrak S(\uy)$ of permutations of the variables $\uy$.
 In particular, there exists a unique element $\phi_i' \in R[[\ux',\ue]][t]$, where $\ue=(e_1,...,e_r)$,
 such that
$$
\phi_i(\ux,\uy)=\phi'_i(\ux,e_l=e_l(\uy), l=1,\hdots,r),
$$
where the right-hand side is obtained by the indicated substitution, and $e_l(\uy)$ denotes the $l$-th symmetric
 elementary polynomial in the variables $\uy$.
 By construction, $\phi_i'(\ux',\ue)$ is polynomial in the variable $t$, of degree $rn$,
 with coefficients formal series in the variables $\ux'$ and $\ue$. In fact, $\phi_i'$
 is an $(rn,d+r-1)$-series that will serve for the following definition:
\end{num}
\begin{df}\label{df:substitution}
We will say that the multivalued series $G_t(\uy)\in \MForm_{m,r}$
 is \emph{composable} if $G_t(\underline 0)=1$. More precisely, if we write
\[
G_t(\uy)=1+\sum_{l=1}^m G_l(\uy)t^l.
\]
then $G_t(\uy)$ is composable if $G_l(\underline 0)=0$ for $l=1,\ldots, m$. In that case, one defines the
 \emph{substitution of $G_t(\uy)$ in $F_t(\ux)\in \MForm_{n,d}$ at the place $i$}
 as the following $(nm,d+r-1)$-series with coefficients in $R$:

$$
F_t\big(x_1,...,x_{i-1},G_t(\uy),x_{i+1},...,x_d\big)
 :=\phi_i'\big(\ux,e_l=G_l(\uy), l=1,\hdots,r\big),
$$
which is well-defined as by assumption, each formal series $G_l(\uy)$ is composable.

Let us write $\MForm_{n,d}^\circ$ the $R$-module of $(n,d)$-series which are composable.
 Then we have defined a bilinear pairing of $R$-modules for any $1 \leq i\leq n$:
$$
\sigma_i:\MForm_{n,d}^\circ \times \MForm_{m,e}^\circ \rightarrow \MForm_{nm,d+e-1}^\circ
$$
\end{df}
Note that with this definition, a power series $\varphi(\ux)$
 is composable if an only if the associated $(1,d)$-series $\varphi_t(\ux)$ 
 -- Remark \ref{rem:nd-series}(1) -- is composable.

\begin{rem}
Following Buchstaber, one can present the previous definition
 using the algebraic splitting principle.
 Indeed, if one introduces the formal roots $G^{[l]}$ of $G_t(\uy)$,
 we can write suggestively:
$$
F_t\big(x_1,...,x_{i-1},G_t(\uy),x_{i+1},...,x_d\big)
 =\prod_{l=1}^m F_t\big(x_1,...,x_{i-1},G^{[l]},x_{i+1},...,x_d\big).
$$
Note that $G^{[l]}$ is in general not a power series in the variables
 $\uy$.
\end{rem}

As a first application of this general concept,
 one introduces morphisms of multivalued series.
\begin{df}\label{df:morphism_nd-series}
Let $F_t(\ux)$ and $G_t(\ux)$ be composable $(n,d)$-series
 with coefficients in $R$.
 A morphism from $F_t(\ux)$ to $G_t(\ux)$ is a composable power series
 $\Theta(x) \in R[[x]]$ such that (in $R[[\ux]][t]$):
$$
\Theta_t(F_t(x_1,\hdots,x_d))=G_t(\Theta(x_1),\hdots,\Theta(x_d)).
$$
where $\Theta_t(x)$ is the $(1,d)$-series associated with $\Theta(x)$
 (cf. \Cref{rem:nd-series}(1)).

The composition of such morphisms is defined by the composition
 of power series. So in particular, $\MForm_{n,d}^\circ(R)$ becomes
 a category, and in fact an $R$-linear category.

It follows from this definition that a morphism $\Theta(x)$
from $F_t(\ux)$ to $G_t(\ux)$ is an isomorphism if and only if
$\Theta(x)=\lambda.x+O(x^2)$ where $\lambda \in R^\times$ (e.g. \cite[Lemma 2.8]{Strickl}).
A usual convention in this setting is to say that $\Theta(x)$
is a \emph{strict isomorphism} when $\lambda=1$.
\end{df}

\begin{rem}
Explicitly, the equation in the above definition reads:
\[
\prod_{l=1}^n(1+\Theta(F^{[l]})t)=G_t(\Theta(x_1),\hdots,\Theta(x_d)).
\]
If we denote by $G_{\theta}^{[l]}$ the roots of the series $G_t(\Theta(x_1),\hdots,\Theta(x_d))$, we may also write suggestively $\Theta(F^{[l]})=G_{\theta}^{[l]}$.
\end{rem}

\begin{num}
It is clear that given a morphism of rings $\varphi:R \rightarrow R'$
 the application $\varphi_*:\MForm_{n,d}^\circ(R) \rightarrow \MForm_{n,d}^\circ(R')$
 is in fact a functor of $\ZZ$-linear categories,
 so that we get a (strict) covariant functor $R \mapsto \MForm_{n,d}^\circ(R)$
 from rings to $\ZZ$-linear categories. Using the Grothendieck
 construction, this can be integrated. Explicitly, we get the following definition.
\end{num}
\begin{df}\label{df:nd-series_category}
We let $\MForm_{n,d}^\circ$ be the category whose objects are pairs $(R,F_t(\ux))$
 where $R$ is a ring and $F_t(\ux)$ is a composable $(n,d)$-series with coefficients in $R$,
 and whose morphisms $(R,F_t(\ux))) \rightarrow (R',G_t(\ux)))$ are pairs
 $(\varphi,\Theta)$ where $\varphi:R \rightarrow R'$ is a ring homomorphism 
 and $\Theta:\varphi_*(F_t(\ux)) \rightarrow G_t(\ux)$ a morphism of $(n,d)$-series
 with coefficients in $R'$. Note that 
 \[
 (\varphi,\Theta)=(\mathrm{Id},\Theta)\circ (\varphi,\mathrm{Id})
 \]
With this definition, $\MForm_{n,d}^\circ$ is a $\ZZ$-linear category, cofibered over the category of rings.
\end{df}

\subsection{Multi-valued formal group laws}

The following definition is a generalization
of Buchstaber's definition which corresponds to $d=2$, see \cite[Definition 1.2]{Bu75}.
\begin{df}\label{df:nd_groups}
Let $R$ be a ring, $n>0$, $d>1$ integers, and $\ux=(x_1,...,x_d)$ formal variables.
 A $d$-ary $n$-valued formal group law, $(n,d)$-group for short,
 is an $(n,d)$-series $F_t(\ux)$ satisfying the following properties:
\begin{enumerate}
\item \textit{Neutral element}. The $(n,1)$-series $F_t(x,0,...,0)$ is split
 with one root $x$ of multiplicity $n$, \emph{i.e.} $F_t(x,0,...,0)=(1+xt)^n$.
\item \textit{Symmetry}. The element $F_t(\ux)$ of $R[[\ux]][t]$ is a fixed
 point under the action of the group $\mathfrak S(\ux)$ permuting the formal variables.
\item \textit{Associativity}. Given another $(d-1)$-tuple of formal variables
 $(x_{d+1},...,x_{2d-1})$, one has the following equality of $(n^2,2d-1)$-series:
\begin{equation}
\begin{split}
&F_t\big(F_t(x_1,\hdots,x_d),x_{d+1},\hdots,x_{2d-1}\big) \\
&\quad =F_t\big(x_1,F_t(x_2,\hdots,x_{d+1}),x_{d+2},\hdots,x_{2d-1}\big)
\end{split}
\end{equation}
where we have used the substitution operation (\Cref{df:substitution}),
 valid as by property (1) $F_t(\ux)$ is composable.
\end{enumerate}
We will denote by $\GForm_{n,d}(R)$ the full sub-category of $\MForm_{n,d}^\circ(R)$ defined
 by $(n,d)$-groups. The corestriction of scalars functor
 obviously respects $(n,d)$-groups so that we can also consider
 the sub-category $\GForm_{n,d}$ of $\MForm_{n,d}$, cofibered over the category of rings,
formed by pairs $(R,F_t(\ux))$ such that $F_t(\ux)$ is an $(n,d)$-group.
\end{df}

\begin{ex}\label{ex:FGL&2FGL}
Our two main examples are the following. The above more general definition will be useful in the next section (see \Cref{df:FTL}).
\begin{itemize}
\item $\FGL=\GForm_{1,2}$ is the category of formal group laws.
\item $\tFGL=\GForm_{2,2}$ is Buchstaber's category of 2-valued formal groups (using the terminology of \cite{BN, Bu75, AL75}). 
\end{itemize}
Indeed, let 
\[
\tilde F_t(x,y)=t^2-F_1(x,y)t+F_2(x,y)
\]
and suppose that it satisfies Buchstaber's axioms. We then have 
\[
F_t(x,y)=1+F_1(x,y)t+F_2(x,y)t^2
\]
and a direct inspection shows that the symmetry and neutral element axioms are equivalent. 
Our associativity axiom reads as 
\[
F_t(F_t(x,y),z)=F_t(x,F_t(y,z))
\]
If $F^{[1]}$ and $F^{[2]}$ are the roots of $F$, then this reads as
\[
F_t(F^{[1]},z)F_t(F^{[2]},z)=F_t(x,F^{[1]})F_t(x,F^{[2]}),
\]
which translates apparently into Buchstaber's four constraints, see \cite[section 2]{Bu75} or \cite[p. 299]{AL75}. 
\end{ex}

\begin{rem}
Recall that a formal group law is a commutative group object in one-
dimensional formal schemes together with the choice of a local parameter. There
are several possible definitions of commutative $d$-ary monoids or groups, and of
multivalued variants of these. None of these will lead to the above definition of
$n$-valued $d$-ary laws. First, we did not discuss the notion of
inverses here. (Recall that for formal group laws, their existence is automatic, see
e.g. \cite[Proposition A2.1.2]{Ra03}. Second, the associativity conditions are not separate conditions on the $n$ different values $F_i$, but something more complicated, enforced by the associativity condition for the ``variable'' $F(x, y)$.
\end{rem}

\begin{rem}\label{rem:noepsilon}
\begin{enumerate}
\item Recall \cite[Theorem 6.1]{Ha78} that one-dimensional formal group laws are commutative over reasonable rings. In general, it is easy to describe non necessarily commutative multivalued formal group laws. 
\item Using symplectic bundles and $MU^*(\HP^{\infty})$, one obtains the motivating example for $\tFGL=\GForm_{2,2}$, see e.g. \cite[section 2]{AL75}. A similar construction can be done in the motivic case.  
\item It is possible to refine the above notion using the ring $\ZZ_{\epsilon}=\ZZ[\epsilon]/(\epsilon^2-1)$ introduced in Section \ref{sec:FTL}, and obtain in particular the notion of an ``unoriented'' two-valued formal group law. Such an object is a $(2,2)$-series $F_t(x,y)$ satisfying the symmetry and associativity axioms, together with the following refined neutral element axiom: The $(2,1)$-series $F_t(x,0)$ is split with roots $x$ and $-\epsilon x$,
and the additional condition $F_t(-\epsilon x,y)=F_{-\epsilon t}(x,y)$. Lacking interesting examples of genuine such objects (i.e. on which $\epsilon$ doesn't act as $-1$), we refrain from pursuing this here in detail, but see Proposition \ref{thm:HMWisnot2fgl} below.
\end{enumerate}
\end{rem}


The following result is an obvious generalization of known cases.
\begin{prop}\label{prop:universal_nd-group}
For any couple of integers $(n,d)$, the category $\GForm_{n,d}$
 admits an initial object $(\A_{n,d},F_t^{\univ,n,d})$.

If one writes
$$
F_t^{\univ,n,d}(\ux)=1+\sum_ {\ualp \in \NN^d, \\ l=1,\hdots,n} a_{\ualp}^l\ux^{\ualp}t^l,
$$
then the $\ZZ$-algebra $\A_{n,d}$ is generated by the family $\{a_{\ualp}^l, \ualp \in \NN^d, 0 \leq l \leq n\}$.
 It is a finitely presented algebra:
$$
\A_{n,d}=\ZZ\big[ a_{\ualp}^l, (\ualp,l) \in \NN^d \times \{0,\hdots,n\}\big]/\mathcal R_{n,d}
$$
and the ideal of relations $\mathcal R_{n,d}$ is generated by homogeneous elements of the graded polynomial ring
 $\ZZ\big[ a_{\ualp}^l\big]$ with respect to the degree defined by 
$$
\deg(a_{\ualp}^l)=|\ualp|-l.
$$
\end{prop}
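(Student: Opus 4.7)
The plan is the standard representability argument for the functor of $(n,d)$-groups. First, set
\[
P := \ZZ\bigl[a_{\ualp}^l \mid \ualp \in \NN^d,\ 1 \le l \le n\bigr]
\]
and introduce the tautological series
\[
F_t^{\mathrm{taut}}(\ux) := 1 + \sum_{\ualp,\, l} a_{\ualp}^l\, \ux^{\ualp} t^l \in P[[\ux]][t].
\]
By freeness of $P$, giving an $(n,d)$-series over a ring $R$ is the same as giving a ring homomorphism $P \to R$. I would then translate each of the three axioms of Definition \ref{df:nd_groups} into polynomial relations in $P$ by equating coefficients of each monomial. The neutral element axiom yields relations $a_{(k,0,\ldots,0)}^l = \binom{n}{l}\,\delta_{k,l}$; the symmetry axiom gives $a_{\ualp}^l = a_{\sigma \cdot \ualp}^l$ for all $\sigma \in \mathfrak S_d$; and the associativity axiom, read through the substitution of Definition \ref{df:substitution} (which makes sense over any commutative ring), yields an equality of $(n^2, 2d{-}1)$-series in $P[[x_1, \ldots, x_{2d-1}]][t]$ whose coefficient-wise comparison produces a further family of polynomials. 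Let $\mathcal R_{n,d}$ be the ideal they generate, set $\A_{n,d} := P/\mathcal R_{n,d}$, and let $F_t^{\univ,n,d}$ be the image of $F_t^{\mathrm{taut}}$; by construction this is an $(n,d)$-group over $\A_{n,d}$.

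The universal property is then automatic: for any $(n,d)$-group $(R, F_t)$, the ring map $P \to R$ sending $a_{\ualp}^l$ to the coefficient of $\ux^{\ualp} t^l$ in $F_t$ kills every relation in $\mathcal R_{n,d}$ (because $F_t$ satisfies the three axioms), so it factors uniquely through $\A_{n,d}$. This gives initiality in $\GForm_{n,d}$, and the claim that $\A_{n,d}$ is generated as a $\ZZ$-algebra by the family $\{a_{\ualp}^l\}$ follows directly from the construction.

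For the homogeneity statement, equip $P$ with the grading $\deg(a_{\ualp}^l) = |\ualp| - l$, and extend this grading to $P[[\ux]][t]$ (and to its analogues in more variables) by declaring $\deg(x_i) = -1$ and $\deg(t) = +1$. Then every monomial $a_{\ualp}^l\, \ux^{\ualp} t^l$ has total degree $(|\ualp| - l) - |\ualp| + l = 0$, so $F_t^{\mathrm{taut}}$ is homogeneous of degree $0$. Both sides of each axiom are consequently equalities between degree-$0$ elements of the ambient polynomial ring, and hence the coefficient of a monomial $\ux^{\ualp'} t^{l'}$ extracted from such an equation is a homogeneous element of $P$ of degree $|\ualp'| - l'$. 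For the neutral element axiom (where $\binom{n}{l}$ has degree $0$) and for the symmetry axiom this is immediate. The only substantial point, which I view as the main obstacle of the proof, is to verify that the substitution of Definition \ref{df:substitution} preserves degree-$0$ elements: this reduces to the observation that the intermediate product $\phi_i(\ux', \uy) = \prod_{l=1}^r F_t^{\mathrm{taut}}(\ldots, y_l, \ldots)$ is a product of degree-$0$ evaluations of $F_t^{\mathrm{taut}}$ (with $\deg(y_l) = -1$), and that rewriting $\phi_i$ as $\phi_i'(\ux', \ue)$ with $\deg(e_l) = -l$ preserves the grading since elementary symmetric polynomials in variables of degree $-1$ are homogeneous of the expected degree. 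Once this bookkeeping is carried out, homogeneity of all three families of relations generating $\mathcal R_{n,d}$ is immediate.
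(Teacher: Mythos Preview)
Your proposal is correct and follows essentially the same approach as the paper: construct the free polynomial ring on the coefficients, quotient by the relations coming from the three axioms, and verify homogeneity. The paper's proof is in fact only a two-sentence sketch (``take the polynomial ring, mod out by the relations, and it is an easy but lengthy verification that the relations are homogeneous''), so your write-up is considerably more detailed, particularly in the explicit grading argument for the substitution operation; the only minor discrepancy is that your auxiliary convention $\deg(x_i)=-1$, $\deg(t)=+1$ is the negative of the one the paper adopts in Definition~\ref{df:degree_nd_gps}, but this is harmless since it is used only internally to force $F_t^{\mathrm{taut}}$ to be homogeneous of degree $0$.
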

\begin{proof}
The proof is well-known: to get the ring $\A_{n,d}$, take the polynomial over $\ZZ$ generated
by the variables $a_{\ualp}^l$ as above, and mod out by the relations coming from the axioms of $(n,d)$-groups.
Then, define $F_t^{\univ}(\ux)$ by the above formula.
It is an easy (but lengthy) verification that the relations generating the ideal $\mathcal R_{n,d}$
are homogeneous with respect to this notion of degree. In particular, the ring $\A_{n,d}$ is naturally
$\ZZ$-graded.
\end{proof}

\begin{df}\label{df:universal_nd_rings}
We call the graded ring $\A_{n,d}$ defined above the universal ring of $(n,d)$-groups,
 and $F_t^{\univ,n,d}(\ux)$ --- or simply $F_t^{\univ}(\ux)$ when $(n,d)$ is clear --- the universal $(n,d)$-group.

Obviously, $\Laz=\A_{1,2}$ is the Lazard ring. We put $\Bus=\A_{2,2}$ and call it the \emph{Buchstaber ring}. 
\end{df}

\begin{ex}
The famous theorem of Lazard (\cite[Th\'eor\`eme II]{Lazard55}) asserts that $\Laz$ is isomorphic to a polynomial ring $\ZZ[u_1,u_2,...]$
 with an infinite number of variables $u_i$.
\end{ex}

\begin{num}
From the above definition, the Buchstaber ring $\Bus$ is generated by elements of the form $a_{ij}^l$,
 $l=1,2$, $i,j \geq 0$, such that the universal $2$-FGL is
$$
F_t^{\univ}(x,y)=1+\sum_{i,j} a_{ij}^1x^iy^jt+\sum_{i,j} a_{ij}^2x^iy^jt^2.
$$
Specializing \Cref{df:nd_groups} to $n=d=2$, 
 the symmetry axiom tell us exactly that $a_{ij}^l=a_{ji}^l$, and the neutral axiom gives:
\begin{align*}
& F_t^{\univ}(x,0)=(1+tx)^2=1+2tx+t^2x^2, \\
\emph{i.e.}\  & a^1_{i0}=2 \delta_1^i, \ a^2_{i0}=\delta_2^i.
\end{align*}
In particular, all elements of $\Bus$ must have non-negative degree, and the only element
 of degree $0$ which is not determined is $a^2_{11}$. Buchstaber denotes this element by $\gamma$ and essentially observed
 the following result in \cite[Theorem 2.3]{Bu75}.\footnote{Compared to Buchstaber original reference,
 we only add the last observation of the following statement.}
\end{num}
\begin{prop}\label{prop:Bus_gradings}
The Buchstaber ring $\Bus$ defined above is an $\NN$-graded ring \emph{i.e.} each element has non-negative
 degree.
 The sub-ring $\Bus^0$ of elements of degree $0$ satisfies:
$$
\Bus^0 \simeq \ZZ[\gamma]/\big((\gamma+2)(\gamma-2)^2\big).
$$
Note in particular that $\Bus$ is an augmented $\Bus^0$-algebra.
\end{prop}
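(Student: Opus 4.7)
First I would establish the $\NN$-grading and identify the degree-$0$ generators. A generator $a_{ij}^l$ has degree $i+j-l$, so negative degree requires the triples $(i,j,l)\in\{(0,0,1),(0,0,2),(1,0,2),(0,1,2)\}$. The neutral element axiom $F_t^{\univ}(x,0)=(1+xt)^2$ forces $a_{00}^1=a_{00}^2=a_{10}^2=0$, and symmetry kills $a_{01}^2$; every negative-degree generator therefore vanishes. In degree $0$ the same axiom fixes the constants $a_{10}^1=a_{01}^1=2$ and $a_{20}^2=a_{02}^2=1$, leaving $\gamma:=a_{11}^2$ as the only free generator. This proves $\Bus$ is $\NN$-graded (and the augmented $\Bus^0$-algebra structure is the natural projection onto the degree-$0$ part), and gives $\Bus^0=\ZZ[\gamma]/J$ for some ideal $J$ to be identified.

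The key reduction is that, by the homogeneity of $\mathcal R_{2,2}$ recalled in Proposition \ref{prop:universal_nd-group}, each coefficient of a monomial $x^ay^bz^ct^l$ in the associativity identity is a polynomial in the $a_{ij}^m$ that is homogeneous of total degree $a+b+c-l$. Degree-$0$ relations therefore arise precisely from coefficients with $a+b+c=l$, and involve only degree-$0$ generators. Substituting the numerical constants fixed in the previous step, this amounts to testing associativity of the truncated $(2,2)$-series
\[
\bar F_t(u,v)\;:=\;1+2(u+v)t+(u^2+\gamma uv+v^2)t^2
\]
over $\ZZ[\gamma]$. I would carry out the computation via the algebraic splitting principle \ref{num:alg_split_pple}: writing the formal roots $u_1,u_2$ of $\bar F_t(x,y)$ with $u_1+u_2=2(x+y)$ and $u_1u_2=x^2+\gamma xy+y^2$, one expands
\[
\bar F_t(\bar F_t(x,y),z)\;=\;\prod_{i=1,2}\bigl(1+2(u_i+z)t+(u_i^2+\gamma u_iz+z^2)t^2\bigr),
\]
re-expresses each $t^l$-coefficient in terms of the elementary symmetric functions of $u_1,u_2$, and compares to $\bar F_t(x,\bar F_t(y,z))$, which is obtained from the cyclic relabelling $(x,y,z)\mapsto(y,z,x)$.

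A direct check shows that the coefficients of $t^l$ agree for $l\le 3$. At $l=4$ every monomial agrees except $x^2yz$ (and, by cyclicity, $xyz^2$), where the discrepancy equals
\[
2\gamma(\gamma+1)-(\gamma^3-2\gamma+8)\;=\;-(\gamma^3-2\gamma^2-4\gamma+8)\;=\;-(\gamma+2)(\gamma-2)^2.
\]
Since associativity is an equation of $(4,3)$-series and hence has no coefficients in $t^l$ for $l>4$, this exhausts the degree-$0$ constraints, giving $J\supseteq((\gamma+2)(\gamma-2)^2)$. For the reverse inclusion, $\bar F_t$ descends to a genuine $(2,2)$-group over $R:=\ZZ[\gamma]/((\gamma+2)(\gamma-2)^2)$ (symmetry and neutrality are manifest; associativity is precisely what was just verified), so its classifying map yields a section of the surjection $\ZZ[\gamma]/((\gamma+2)(\gamma-2)^2)\twoheadrightarrow\Bus^0$, whence the isomorphism.

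The main obstacle is the bookkeeping in the product above: roughly a dozen monomials in $x,y,z$, each with a coefficient that is a polynomial in $\gamma$ of degree up to three. Homogeneity combined with the cyclic symmetry between the two sides collapses most of the comparisons automatically, localizing the entire relation at the single monomial $x^2yz\,t^4$, where the cubic $\gamma^3-2\gamma^2-4\gamma+8=(\gamma+2)(\gamma-2)^2$ appears and factors cleanly.
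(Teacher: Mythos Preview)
Your proposal is correct and follows the same approach as the paper's (very terse) proof, which simply asserts that the degree-$0$ associativity relations reduce to the single relation $(\gamma+2)(\gamma-2)^2=0$; you supply the computational details that the paper omits, including the explicit localization of the relation at the $x^2yz\,t^4$ coefficient and the universal-property argument for the reverse inclusion $J\subseteq\bigl((\gamma+2)(\gamma-2)^2\bigr)$. One minor remark: your observation that $\bar F_t(x,\bar F_t(y,z))=P(y,z,x)$ for $P(x,y,z):=\bar F_t(\bar F_t(x,y),z)$, combined with the built-in $(x\leftrightarrow y)$-symmetry of $P$, reduces associativity to the $S_3$-invariance of the coefficients $c_{abc}$, which is the clean way to justify that ``most comparisons collapse automatically'' and that only the orbit of $(2,1,1)$ contributes nontrivially.
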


\begin{proof}
The first assertion has already been observed. The second one is now a consequence of the fact
 that the associativity relations which are of degree $0$ reduce to the single relation
$(\gamma+2)(\gamma-2)^2=0$, and this concludes.
\end{proof}

If we work over $\ZZe$ (compare Remark \ref{rem:noepsilon}), the above computation in degree zero refines to $F^{univ,0}_{\epsilon}$ given by 
\[
(F^{univ,0}_{\epsilon})_t(x,y)=1+(1-\epsilon)(x+y)t+\left(-\epsilon(x^2 + y^2) + a xy\right)t^2
\]
over $\Bus^0_{\epsilon} \simeq \ZZe[a]/\langle (1 + \epsilon)a, (a - (1 - \epsilon))^2 (a + (1 - \epsilon))\rangle$.


\begin{ex}
It follows in particular that all $2$-formal groups $F_t(x,y)$ whose degree
 (\Cref{df:degree_nd_gps}) is equal to $0$ are of the form:
$$
F_t^0(x,y)=1+2(x+y)t+(x^2+\gamma xy+y^2)t^2
$$
with either $\gamma=-2$ or $\gamma=2$.
One can rephrase this by saying that $(\Bus^0,F_t^0)$ is the universal $2$-formal group of degree $0$.
Buchstaber calls the above an \emph{elementary 2-formal group}. 
\end{ex}

Observe that if $\pi:\Bus\to \Bus^0$ is the ring homomorphism obtained by killing the variables $a^1_{ij}$ and $a^2_{ij}$ of (strictly) positive degree, then we obtain a morphism
\[
(\pi,\mathrm{Id}):(\Bus,F^{\univ}_t)\to (\Bus^0,F_t^0)
\]
in $\GForm_{2,2}$, which is just the morphism given by the universal property of $(\Bus,F^{\univ}_t)$. On the other hand, the ring homomorphism $\Bus^0\to \Bus$ cannot be extended to a morphism in $\GForm_{2,2}$.

\begin{rem}
Note in particular a huge difference between the Lazard ring and the Buchstaber ring: $\Bus$ is non reduced. From the above computation $(\gamma+2)(\gamma-2)$ is a non-zero nilpotent element. In particular, $\Bus$ cannot be a polynomial ring.
\end{rem}

We close this section with the following elementary lemma (compare \cite[Corollary 2.4]{Bu75}).
\begin{lm}
There is a canonical monomorphism of rings
$$
\Bus^0 \rightarrow \big(\ZZ[\gamma]/(\gamma+2)^2\big) \times \big(\ZZ[\gamma]/(\gamma-2)\big)
$$
which is an isomorphism after inverting $2$.
Moreover, there is canonical monomorphism of rings
$$
\Bus^0_{red} \rightarrow \big(\ZZ[\gamma]/(\gamma+2)\big) \times \big(\ZZ[\gamma]/(\gamma-2)\big)
$$
which is an isomorphism after inverting $2$.
\end{lm}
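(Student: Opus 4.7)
The plan is to invoke \Cref{prop:Bus_gradings} to identify $\Bus^0 \cong \ZZ[\gamma]/\langle(\gamma+2)(\gamma-2)^2\rangle$, and then to recognise the asserted morphism as the natural Chinese-remainder-style projection onto the two factors attached to the roots $\gamma=\pm 2$ of the defining polynomial.

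The first step is to observe that $\ZZ[\gamma]$ is a UFD in which $\gamma+2$ and $\gamma-2$ are two distinct primes, the respective quotients being $\ZZ$. Unique factorisation then yields the intersection identity
\[
(\gamma+2)\cap(\gamma-2)^2=\langle(\gamma+2)(\gamma-2)^2\rangle,
\]
which is exactly the kernel of the projection $\ZZ[\gamma]\to \ZZ[\gamma]/(\gamma+2)\times \ZZ[\gamma]/(\gamma-2)^2$. Passing to the quotient supplies the desired monomorphism from $\Bus^0$. To upgrade this to an isomorphism after inverting $2$, I would exploit the arithmetic identity $(\gamma+2)-(\gamma-2)=4$: over $\ZZ[1/2]$ this makes $(\gamma+2)$ and $(\gamma-2)$ comaximal, and a fortiori so are $(\gamma+2)$ and $(\gamma-2)^2$. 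The classical Chinese remainder theorem then provides surjectivity.

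For the reduced statement, I would first identify the nilradical of $\Bus^0$. The element $(\gamma+2)(\gamma-2)\in \Bus^0$ is nilpotent, since its square equals $(\gamma+2)\cdot(\gamma+2)(\gamma-2)^2=0$. Conversely, the further quotient $\ZZ[\gamma]/\langle(\gamma+2)(\gamma-2)\rangle$ embeds, by the very same UFD argument, into the product of integral domains $\ZZ[\gamma]/(\gamma+2)\times\ZZ[\gamma]/(\gamma-2)$, so it is already reduced. This identifies $\Bus^0_{\mathrm{red}}\cong \ZZ[\gamma]/\langle(\gamma+2)(\gamma-2)\rangle$, and the second part of the lemma then follows by a direct repetition of the argument above, now with coprime linear factors so that no multiplicity complication arises.

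No serious obstacle is expected: the whole argument amounts to bookkeeping with primary decomposition in the UFD $\ZZ[\gamma]$, modulo the algebraic input of \Cref{prop:Bus_gradings}. The one slightly delicate point worth checking carefully is the intersection identity $(\gamma+2)\cap(\gamma-2)^2=\langle(\gamma+2)(\gamma-2)^2\rangle$; everything else is formal once the two prime factors have been identified as distinct.
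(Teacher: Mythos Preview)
The paper does not supply a proof of this lemma; it is stated as elementary with a reference to Buchstaber. Your Chinese-remainder argument via the primary decomposition in the UFD $\ZZ[\gamma]$ is exactly the natural one and is correct in all details, including the identification of the nilradical and the comaximality after inverting $2$ from $(\gamma+2)-(\gamma-2)=4$.

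One remark worth making explicit: as literally printed, the lemma places the square on the wrong factor. With $\Bus^0\cong\ZZ[\gamma]/\big((\gamma+2)(\gamma-2)^2\big)$ from \Cref{prop:Bus_gradings}, the canonical projection lands in $\ZZ[\gamma]/(\gamma+2)\times\ZZ[\gamma]/(\gamma-2)^2$, which is precisely what your kernel computation $(\gamma+2)\cap(\gamma-2)^2=\big((\gamma+2)(\gamma-2)^2\big)$ establishes; the map to $\ZZ[\gamma]/(\gamma+2)^2\times\ZZ[\gamma]/(\gamma-2)$ is not even well defined, since $(\gamma+2)(\gamma-2)^2\notin(\gamma+2)^2$. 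You have tacitly corrected this typo; it would be worth flagging it explicitly.
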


In other words, $\Bus^0_{red}$ is isomorphic to $\ZZ\times \ZZ$,  such
 that the projections to the first and second factors are respectively the evaluations at $\gamma=-2$ and $\gamma=+2$.

\subsection{Buchstaber's classification of 2-formal groups}

Given the computation of the degree $0$ part of the Buchstaber ring, and following \cite{Bu75}, one sets the following notation.
\begin{df}\label{df:Buchstaber_type}
A $2$-formal group $F_t(x,y)$ over a ring $R$ will be said to be \emph{of type I} (resp. \emph{type II})
 if the coefficient of $xyt^2$, generically denoted by $\gamma=a_{11}^2$, is equal to $-2$
 resp. $+2$.
 Further, we define the two following rings:
$$
\Bus_I=\Bus/(\gamma+2), \quad \Bus_{II}=\Bus/(\gamma-2),
$$
\end{df}
According to what was said before,
 the ring $\Bus_I$ classifies the $2$-formal groups of type I.
 As, according to our conventions, $\gamma=a_{11}^2$ is of pure degree $0$,
 these two rings are again $\NN$-graded (\Cref{prop:Bus_gradings}). We write $(\tFGL)_I$ for the full subcategory of $\tFGL$ whose objects are laws of the first type and $(\tFGL)_{II}$ for the full subcategory of objects of the second type. 

The following statement is essentially an elaboration of Buchstaber's original results.

\begin{prop}
Let $R$ be a ring. The following assignment
$$
\sigma_R:\FGL(R) \rightarrow \tFGL(R), F(x,y) \mapsto F^2_t(x,y)=(1+F(x,y)t)^2
$$
defines a faithful functor, by taking the identity on morphisms.
 Its essential image lies in $2$-formal groups
 which are split (\Cref{df:roots_nd-series}) and of the second type.

If $2 \in R^\times$ then $\sigma_R$ is full.
 If in addition $R$ is a domain, then $\sigma_R$ defines an equivalence of categories
 between formal group laws over $R$ and the $2$-formal groups over $R$ which are
 of the second type.
\end{prop}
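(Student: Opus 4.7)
The plan is to verify the five assertions in order, with only essential surjectivity being substantive.

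To show $\sigma_R(F)_t := (1+F(x,y)t)^2$ is a 2-FGL, I would check the axioms of \Cref{df:nd_groups}: symmetry and the neutral element condition are immediate from those of $F$, and for associativity the substitution formula via formal roots (remark after \Cref{df:substitution}) is especially clean. Since $\sigma_R(F)_t(x,y)$ has the single formal root $F(x,y)$ with multiplicity $2$, one computes
\[
\sigma_R(F)_t(\sigma_R(F)_t(x,y),z) = \sigma_R(F)_t(F(x,y),z)^2 = (1 + F(F(x,y),z)t)^4,
\]
and the analogous expression on the other side reduces the 2-FGL associativity to that of $F$. The same root computation turns a FGL morphism $\Theta:F\to G$ into a 2-FGL morphism between the $\sigma_R$-images. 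The image lies in split 2-FGLs (the Buchstaber form of $\sigma_R(F)_t$ is $(t-F(x,y))^2$) of type II (the $xy$-coefficient of $F^2=(x+y+\cdots)^2$ is $2$, giving $a_{11}^2 = 2$), and faithfulness is automatic since $\sigma_R$ is the identity on Hom-sets.

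For fullness when $2 \in R^\times$, I would start from a 2-FGL morphism $\Theta:\sigma_R(F) \to \sigma_R(G)$, i.e.\ the identity $(1+\Theta(F(x,y))t)^2 = (1+G(\Theta(x),\Theta(y))t)^2$; comparing coefficients of $t$ and dividing by $2$ recovers the FGL morphism condition $\Theta(F(x,y)) = G(\Theta(x),\Theta(y))$.

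The heart of the proof is essential surjectivity onto type II when $R$ is a domain with $2 \in R^\times$. Given such $H$ with $H_t = 1 + H_1 t + H_2 t^2$, I set $F := H_1/2 \in R[[x,y]]$; the neutral axiom gives $F(x,0)=x$, and $F$ is symmetric. Granting the key identity $H_2 = F^2$, one has $H = \sigma_R(F)$, and extracting the $t$-coefficient of the 2-FGL associativity of $H$ gives $4F(F(x,y),z) = 4F(x,F(y,z))$ by the computation of the first paragraph, so dividing by $4$ proves $F$ is a FGL.

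The hard part, and the only place where the domain hypothesis is crucial, is thus to establish $H_2 = F^2$, equivalently the vanishing of the discriminant $\Delta := H_1^2 - 4H_2 \in R[[x,y]]$. The neutral element axiom forces $\Delta \in (xy) R[[x,y]]$ since $\Delta(x,0) = \Delta(0,y) = 0$, but this alone is insufficient. My approach would be to pass to a splitting extension $A$ of $R[[x,y]]$ as in \S\ref{num:alg_split_pple}, writing $H_t(x,y) = (1+ut)(1+vt)$ with $u,v = F \pm w$ and $w^2 = -\Delta/4$; the type II condition forces $w \in (xy)A$. The 2-FGL associativity of $H$, rewritten in terms of the symmetric functions $F(u,z)+F(v,z)$ and $F(u,z)F(v,z)$ of the roots, yields nontrivial constraints from which one extracts (essentially following Buchstaber) that $w$, hence every coefficient of $\Delta$, is nilpotent in the universal type-II ring $\Bus_{II}[1/2]$. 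Since $R[[x,y]]$ is a domain, nilpotents vanish, giving $\Delta = 0$. The main computational burden of the proof is this nilpotency extraction.
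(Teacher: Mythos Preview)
Your treatment of the first four points (well-definedness of $\sigma_R$, faithfulness, image in split type~II laws, and fullness when $2\in R^\times$) matches the paper's proof essentially line for line, including the extraction of the $t$-coefficient for fullness.

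For essential surjectivity the two approaches rest on the same fact but are packaged differently. The paper argues in one sentence, citing Buchstaber: over a domain a \emph{split} $2$-FGL has well-defined roots $\theta_1,\theta_2$, and the associativity axiom forces $\theta_1=\theta_2$, which is then the desired FGL. You instead aim to show that the discriminant $\Delta=H_1^2-4H_2$ vanishes by arguing that its coefficients are nilpotent in the universal type~II ring $\Bus_{II}[1/2]$, so that over a domain $\Delta=0$ and $H_t=(1+(H_1/2)t)^2$. Since $\Delta=(\theta_1-\theta_2)^2$ in any splitting extension, these are equivalent statements; your formulation has the merit of explaining why an \emph{arbitrary} (not a priori split) type~II $2$-FGL over a domain lies in the essential image---a point the paper's proof elides by tacitly restricting to the split case. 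Neither you nor the paper actually carries out the associativity computation that yields the nilpotency/equality of roots; both defer to Buchstaber, and your candid remark that this is ``the main computational burden'' is accurate.

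One small slip: with $u,v=F\pm w$ and $u+v=H_1$, $uv=H_2$, one has $(u-v)^2=H_1^2-4H_2=\Delta$, so $w^2=\Delta/4$, not $-\Delta/4$.
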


\begin{proof}
The $(2,2)$-series $F^2_t(x,y)$ obviously satisfies the neutral and symmetry axioms of \Cref{df:nd_groups},
 as $F(x,y)$ satisfies the corresponding axioms.
 As it is moreover splits, with a double root $F(x,y)$, one easily deduces the associativity axiom of $2$-formal groups
 from the associativity axiom of $F(x,y)$.

Given a morphism $\Theta:F(x,y) \rightarrow G(x,y)$, we get:
\begin{equation}\label{eq:morphism}
\begin{split}
&\Theta_t(F^2_t(x,y))=(1+\Theta(F(x,y))t)(1+\Theta(F(x,y))t) \\
&=(1+G(\Theta(x),\Theta(y)))t)(1+G(\Theta(x),\Theta(y)))t)
 =G_t^2(\Theta(x),\Theta(y)).
\end{split}
\end{equation}
where the first equality holds as $F_t^2(x,y)$ splits (with double root $F(x,y)$), and the second one as $\Theta(x)$ is a morphism
 of FGL. In particular, $\Theta_t$ is indeed a morphism of $2$-FGL.

The assertion on the image of $\sigma_R$ is obvious as $F(x,y) \equiv x+y\ (xy)$.
 Let us assume that $2 \in R^\times$. We consider a morphism $\Theta_t=1+\Theta(x)t$
 from $F^2_t(x,y)$ to $G_t^2(x,y)$ and we check that $\Theta(x)$ is a morphism of FGL from $F$ to $G$.
 By assumption, we know that Equation \eqref{eq:morphism} holds.
 Expanding and taking the coefficient of $t$, one deduces that
$$
2\Theta(F(x,y))=2G(\Theta(x),\Theta(y))
$$
which allows to conclude.

The final assertion was already observed by Buchstaber. If $R$ is integral,
 the roots $\theta_1(x,y)$ and $\theta_2(x,y)$ of a split $2$-formal group $F_t(x,y)$ are unique (as roots of the polynomial $\tilde F_t$ in $t$,
 with coefficients in the integral ring $R[[x,y]]$).
 One deduces from the associativity axiom of $2$-formal groups that
 $\theta_1(x,y)=\theta_2(x,y)$ and that is satisfies the axioms of a formal group law.
 This implies the essential surjectivity of $\sigma_R$.
\end{proof}

%

\begin{rem}
The recipe to make the morphism $\mathfrak S$ explicit is as follows. We let $F^{\univ}(x,y)=x+y+\sum_{ij} a_{ij}x^iy^j$
 be the universal FGL. Then one computes the associated ``squared'' $2$-formal group law:
\begin{align*}
&\big(1+(x+y+\sum_{ij} a_{ij}x^iy^j)t\big)^2
 =1+2(x+y+a_{11}xy+...)t \\
 &\qquad\qquad+\lbrack x^2+2xy+y^2+2a_{11}(x^2y+xy^2)+(a_{11}^2+4a_{12})x^2y^2+\hdots\rbrack t^2.
\end{align*}

Then the coefficient of $x^iy^jt^l$ in this expression gives the image of the element $a_{ij}^l \in \Bus$
 under $\mathfrak S$. When $l=1$, this rule is simple: $\mathfrak S(a_{ij}^1)=2a_{ij}$.
 When $l=2$, one gets relations like:
\begin{align*}
&\mathfrak S(a_{1i}^2)=2a_{1,i-1}, \\
&\mathfrak S(a_{22}^2)=a_{11}^2+4a_{21}, \ \mathfrak S(a_{23}^2)=2a_{11}a_{12} + 2a_{13} + 2a_{22}, \\
&\mathfrak S(a_{22}^2)=2 a_{11} a_{22} + 2 a_{12}^{2} + 4 a_{23}.
\end{align*}
\end{rem}

\begin{num}
We now start with preparations which will lead to another more subtle functor $\FGL \to \tFGL$ which produces 2-FGL of the first type and is more closely related to characteristic classes.
 
Given a formal group law $F(x,y)$, we will denote by
$$
\bar x=-x+a_{11}x^2+(a_{12}-a_{11}-a_{11}^2)x^3+\hdots
$$
the unique power series in $x$ such that $F(x,\bar x)=0$ ---
 \emph{i.e.} the formal inverse, see \cite[Lem. 2.7]{Strickl}. For any composable $f\in R[[x]]$ we denote more generally by $\overline f$ the series 
 $$
\bar f:=\bar x(f)=-f+a_{11}f^2+(a_{12}-a_{11}-a_{11}^2)f^3+\hdots
$$

\begin{lm}
For any composable power series $f\in R[[x]]$, the series $\overline f$ is the unique series such that 
\[
F(f,\overline f)=0.
\]
\end{lm}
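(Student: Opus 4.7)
The plan is to prove existence and uniqueness separately, both using the standard machinery of substitution in the $(x)$-adic topology on $R[[x]]$.

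For existence, I would start from the defining identity $F(x, \bar{x}) = 0$ in $R[[x]]$ which characterizes the formal inverse $\bar{x}$. Since $f$ is composable, the substitution $x \mapsto f$ is a continuous $R$-algebra endomorphism of $R[[x]]$; applying it to the identity and invoking the associativity of composition of formal power series (in the guise $F(x,\bar{x})|_{x=f} = F(f, \bar{x}(f))$) yields $F(f,\bar{f}) = 0$, since $\bar{f} := \bar{x}(f)$ by definition.

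For uniqueness, I would assume that $g \in R[[x]]$ also satisfies $F(f, g) = 0$. (Note that $F(f,g)$ being well-defined already forces $g$ to be composable, and one can furthermore observe that $F(0, y) = y$.) Subtracting and expanding with $F(x_1,x_2) = x_1 + x_2 + \sum_{i,j\geq 1}a_{ij}x_1^ix_2^j$ gives
$$0 = (g - \bar{f}) + \sum_{i,j\geq 1} a_{ij} f^i \bigl(g^j - \bar{f}^j\bigr).$$
The standard factorization $g^j - \bar{f}^j = (g - \bar{f})\sum_{k=0}^{j-1}g^{j-1-k}\bar{f}^k$ lets me rewrite the right-hand side as $(g - \bar{f})\cdot u$, where
$$u = 1 + \sum_{i,j\geq 1} a_{ij}\, f^i \sum_{k=0}^{j-1} g^{j-1-k}\bar{f}^k.$$
The composability of $f$ ensures $f^i \in (x)$ for every $i \geq 1$, so $u \equiv 1 \pmod{(x)}$ and is thus a unit in $R[[x]]$. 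Cancelling $u$ gives $g = \bar{f}$.

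The only real obstacle is the administrative one of keeping ``substitution of $f$ into the one-variable series $\bar{x}$'' distinct from ``substitution of $(f,g)$ into the two-variable series $F$'' and verifying these operations interact correctly; this is routine once one works in the $(x)$-adic topology. Conceptually, the whole lemma is the formal implicit function theorem applied to the equation $F(f,y) = 0$ in $y$ near $y = 0$, the non-degeneracy condition being that $\partial_y F(0,0) = 1$ is a unit in $R$.
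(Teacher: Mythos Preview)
Your existence argument is exactly the paper's: substitute $x\mapsto f$ into $F(x,\bar x)=0$.

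For uniqueness the two arguments diverge. The paper exploits the group-law axioms directly: given $F(f,g)=0$ it computes
\[
g=F(g,0)=F\bigl(g,F(f,\overline f)\bigr)=F\bigl(F(g,f),\overline f\bigr)=F(0,\overline f)=\overline f,
\]
using the neutral element, associativity, and commutativity in one line. Your approach instead subtracts $F(f,g)-F(f,\overline f)$, factors out $g-\overline f$, and checks the cofactor is a unit because it is $\equiv 1\pmod{(x)}$. Both are correct. The paper's route is shorter and makes transparent that uniqueness of inverses is a purely group-theoretic fact, requiring nothing about power-series analysis; your route is the implicit-function-theorem argument you mention, which is more hands-on but has the virtue of not needing associativity or commutativity of $F$ --- it only uses that $\partial_y F(0,0)=1$ is a unit. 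So your argument would still prove uniqueness for a non-associative or non-commutative $F$, whereas the paper's would not.
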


\begin{proof}
As $F(x,\bar x)=0$, it follows immediately, replacing $x$ by $f$ that $F(f,\overline f)=0$. Unicity is obvious: If $g\in R[[x]]$ is such that $F(f,g)=0$, we have 
\[
g=F(g,0)=F(g,F(f,\overline f))=F(F(g,f),\overline f)=\bar f.
\]
\end{proof}

For further use, it is worth noting that $\bar x$ induces an automorphism of the formal group law $F$, in the sense that 
\[
\overline{F(x,y)}:=\bar x(F(x,y))=F(\bar x,\bar y).
\]
The following lemma is probably well-known, but we include it for the sake of completeness.

\begin{lem}\label{lem:inverseseries}
Let $F\in R[[x,y]]$ be a (commutative) formal group law. Then, there exists a unique power series $G\in R[[x]]$ such that $x+\bar x=G(x\bar x)$.
\end{lem}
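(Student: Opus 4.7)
The plan is to exploit that a commutative formal group law is symmetric in $x, y$, hence rewritable in the elementary symmetric variables $e_1 = x + y$ and $e_2 = xy$; then the formal implicit function theorem produces a $G \in R[[e_2]]$ solving the ``zero locus'' of $F$, and the desired equality $A = G(B)$ (where $A := x + \bar{x}$ and $B := x\bar{x}$) is obtained by substituting the identity $F(x,\bar{x}) = 0$.

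First, by the classical identification of the ring of symmetric formal power series in two variables with $R[[e_1, e_2]]$, there exists a unique $\tilde F \in R[[e_1, e_2]]$ with $F(x,y) = \tilde F(e_1, e_2)$. The axioms $F(0,0) = 0$ and $F(x,y) \equiv x + y \pmod{(x,y)^2}$ give $\tilde F(0,0) = 0$ and $(\partial \tilde F/\partial e_1)(0,0) = 1$. By the implicit function theorem for formal power series, there exists a unique $G \in R[[e_2]]$ with $G(0) = 0$ and
\[
\tilde F(G(e_2), e_2) = 0 \quad \text{in } R[[e_2]].
\]

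Next, I substitute $e_1 \mapsto A$ and $e_2 \mapsto B$ in $R[[x]]$, which is legitimate since $A, B \in (x)R[[x]]$. This produces two identities in $R[[x]]$: $\tilde F(A, B) = F(x, \bar{x}) = 0$ by the defining property of $\bar x$, and $\tilde F(G(B), B) = 0$ by the previous step. To conclude $A = G(B)$, I use the factorization
\[
\tilde F(e_1, e_2) - \tilde F(e_1', e_2) = (e_1 - e_1')\, P(e_1, e_1', e_2)
\]
with $P \in R[[e_1, e_1', e_2]]$ having constant term $(\partial \tilde F/\partial e_1)(0,0) = 1$. Substituting $e_1 = A$, $e_1' = G(B)$, $e_2 = B$ gives $(A - G(B)) \cdot P(A, G(B), B) = 0$, and since $P(A, G(B), B)$ has constant term $1$ in $R[[x]]$, it is a unit, whence $A = G(B)$.

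For uniqueness, if $G, G'$ are two solutions, then $(G - G')(B) = 0$ in $R[[x]]$. Since $B = -x^2 + O(x^3)$ has a unit leading coefficient, substitution by $B$ defines an injective ring homomorphism $R[[e]] \to R[[x]]$: any nonzero $H = h_n e^n + h_{n+1} e^{n+1} + \cdots$ with $h_n \ne 0$ yields $H(B) = (-1)^n h_n x^{2n} + O(x^{2n+1}) \ne 0$. Hence $G = G'$. The main obstacle lies in the identification step: as $R[[x]]$ need not be a domain, one cannot argue by uniqueness of roots of $\tilde F(\cdot, B)$ directly; the resolution is the factorization above, which extracts a unit factor and hence bypasses any zero-divisor issue.
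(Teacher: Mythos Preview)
Your proof is correct and takes a different route from the paper's. The paper writes $F(x,y) = (x+y) + u\cdot xy + \sum_{i+j\geq 2} a_{ij}(x+y)^i(xy)^j$ and constructs $G$ by an explicit Picard-type iteration: setting $G_0 = 0$, $G_1 = -ut$, and recursively $G_n \equiv -ut - \sum_{i+j=2}^n a_{ij} G_{n-1}(t)^i t^j \pmod{t^{n+1}}$, it verifies by induction that $x + \bar{x} \equiv G_n(x\bar{x}) \pmod{x^{2n+2}}$ and that the $G_n$ stabilize degree by degree. In effect this re-proves the formal implicit function theorem by hand in this instance. You instead invoke the implicit function theorem as a black box to produce $G$ with $\tilde F(G(e_2),e_2)=0$, and the additional ingredient is the factorization $\tilde F(e_1,e_2) - \tilde F(e_1',e_2) = (e_1-e_1')P$ with $P$ having unit constant term, which lets you conclude $A=G(B)$ from $\tilde F(A,B)=\tilde F(G(B),B)=0$ without any integrality assumption on $R$. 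Your argument is shorter and more conceptual; the paper's is fully self-contained and yields an explicit recursion for the coefficients of $G$.
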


\begin{proof}
We start with a few preliminary observations. We have a ring homomorphism 
\[
R[[x,y]]\to R[[x]]
\]
such that $x\mapsto x$ and $y\mapsto \bar x$. As $\bar x=-x+\sum_{i\geq 2}a_ix^i$, we get $x+\bar x=x\bar x=0\pmod {x^2}$ and then the ideal $\langle x+y,xy\rangle^n$ is mapped under this homomorphism into $\langle x^{2n}\rangle$. The power series $F$ being symmetric, we may write it as 
\[
F(x,y)=x+y+uxy+\sum_{i+j\geq 2}a_{ij}(x+y)^i(xy)^j.
\]
We now construct for any $n\in\NN$ polynomials $G_n\in R[t]$ of degree $n$ by induction. We set $G_0=0, G_1=-ut$ and 
\[
G_n=-ut-\sum_{i+j=2}^na_{ij}G_{n-1}(t)^it^j \pmod{t^{n+1}}
\]
for $n\geq 2$. We now check by induction that for any $n\in\NN$ the following conditions are satisfied:
\begin{enumerate}
\item $x+\bar x=G_n(x\bar x)$ modulo $\langle x^{2n+2}\rangle$.
\item $G_{n}=G_{n-1} \pmod {t^{n}}$. 
\end{enumerate}

For $n=0$, we already know that $x+\bar x=0\pmod {x^2}$ and the second relation is empty. For $n=1$, we observe that 
\[
F(x,y)=x+y+uxy\pmod {\langle x+y,xy\rangle^2}
\]
and it follows that 
\[
0=F(x,\bar x)=x+\bar x+ux\bar x\pmod{x^4}.
\]
As $G_1=-ut$, both properties are satisfied. Let then $n\geq 2$ be such that properties 1. and 2. are satisfied for $n-1$.
Modulo $ \langle x+y,xy\rangle^{n+1}$, we have 
\[
F(x,y)=x+y+uxy+\sum_{i+j= 2}^{n}a_{ij}(x+y)^i(xy)^j
\]
and consequently
\[
0=F(x,\bar x)=x+\bar x+ux\bar x+\sum_{i+j= 2}^{n}a_{ij}(x+\bar x)^i(x\bar x)^j\pmod {x^{2n+2}}.
\]
By construction $x+\bar x=G_{n-1}(x\bar x)$ modulo $\langle x^{2n}\rangle$, i.e. $x+\bar x=G_{n-1}(x\bar x)+Hx^{2n}$ for some polynomial $H$. Modulo $\langle x^{2n+2}\rangle$, we have 
\[
(x+\bar x)^i=G_{n-1}(x\bar x)^i+iHG_{n-1}(x\bar x)^{i-1}x^{2n}
\]
and we can use the fact that $G_{n-1}(t)=0 \pmod t$ and $x\bar x=0$ modulo $\langle x^2\rangle$ to get $(x+\bar x)^i=G_{n-1}(x\bar x)^i\pmod {\langle x^{2n+2}\rangle}$. Altogether, the above expression reads as 
\[
0=x+\bar x+ux\bar x+\sum_{i+j= 2}^{n}a_{ij}G_{n-1}(x\bar x)^i(x\bar x)^j\pmod {x^{2n+2}}.
\]
This means that $x+\bar x=G_n(x\bar x)$ modulo $x^{2n+2}$, and the first point is satisfied. We are left with proving that $G_n=G_{n-1}$ modulo $t^n$. By definition, we may write 
\[
G_n=-ut-\sum_{i+j=2}^na_{ij}G_{n-1}(t)^it^j \pmod{t^{n+1}}.
\]
We know by induction that $G_{n-1}=G_{n-2}\pmod{t^{n-1}}$. It follows that $G_{n-1}t=G_{n-2}t$ modulo $t^n$. On the other hand, $G_{n-2}=0\pmod t$ and we obtain for $i\geq 2$ that 
\[
G_{n-1}^i=(G_{n-2}+\alpha t^{n-1})^i=G_{n-2}^i.
\]
Modulo $t^n$, we then obtain
\[
G_n=-ut-\sum_{i+j=2}^na_{ij}G_{n-1}(t)^it^j=-ut-\sum_{i+j=2}^na_{ij}G_{n-2}(t)^it^j=G_{n-1}.
\]
\end{proof}

\begin{rem}
Obviously, the conclusion of the lemma also holds for $-x\bar x$, i.e. there is a unique power series $G'$ such that $x+\bar x=G'(-x\bar x)$. We opted for the previous formulation in order to avoid sign issues.
\end{rem}

\begin{rem}\label{rem:finiteextension}
As a consequence of the above lemma, we see that the ring extension $R[[-x\bar x]]\subset R[[x]]$ is finite. Indeed, consider the polynomial
\[
(T-x)(T-\bar x)=T^2-(x+\bar x)T+x\bar x=T^2-G(x\bar x)T+x\bar x
\]
The extension  $R[[-x\bar x]]\subset R[[-x\bar x]][T]/(T^2-G(x\bar x)T+x\bar x)$ is finite and there is an isomorphism $R[[-x\bar x]][T]/(T^2-G(x\bar x)T+x\bar x)\to R[[x]]$ mapping $T$ to $x$.
\end{rem}

\end{num}
\begin{lm}\label{lem:unique22}
Let $F$ be a formal group law.
 Then there exists a unique $(2,2)$-series $\bar F_t^2(x,y)$
 such that the following equality holds in $R[[x,y]][t]$:
$$
\bar F^2_t(-x\bar x,-y\bar y)=(1-F(x,y)F(\bar x,\bar y)t)(1-F(\bar x,y)F(x,\bar y)t).
$$
The series $\bar F_t^2(x,y)$ is of type I.
\end{lm}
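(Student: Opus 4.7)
The plan is to establish existence, uniqueness, and the type~I property in sequence. For \emph{existence}, my main tool is invariant theory: by Remark~\ref{rem:finiteextension} applied in each variable separately, $R[[x,y]]$ is free of rank~$4$ over $R[[-x\bar x,-y\bar y]]$, with Galois group $\langle\tau_x\rangle\times\langle\tau_y\rangle$ where $\tau_x$ swaps $x$ with $\bar x$ fixing $y$, and $\tau_y$ swaps $y$ with $\bar y$ fixing $x$. The two factors $F(x,y)F(\bar x,\bar y)$ and $F(\bar x,y)F(x,\bar y)$ on the right-hand side of the identity are exchanged by each of $\tau_x$ and $\tau_y$, so their product is invariant and its $t$-coefficients therefore lie in $R[[-x\bar x,-y\bar y]]$. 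I define $\bar F^2_t(u,v)$ as the unique element of $R[[u,v]][t]$ pulling back to this product under $u\mapsto -x\bar x$, $v\mapsto -y\bar y$; uniqueness is automatic from the injectivity of this substitution map, which follows from the same freeness statement.

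To handle the type~I assertion I first pin down the neutral-element structure by specialization, then read off $a^2_{11}$ from a low-order expansion. Setting $x=0$ forces $\bar x=0$, so the right-hand side collapses to $(1-y\bar y\,t)^2$ and one obtains $\bar F^2_t(0,v)=(1+vt)^2$, and symmetrically $\bar F^2_t(u,0)=(1+ut)^2$. Writing $\bar F^2_t(u,v)=1+B_1(u,v)t+B_2(u,v)t^2$, this yields $B_2(u,v)=u^2+v^2+\gamma\,uv+O((u,v)^3)$ with $\gamma=a^2_{11}$ the coefficient to be determined.

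To identify $\gamma$ I compare the $x^2y^2$-coefficients on both sides of the $t^2$-component of the defining identity. On the left, since $-x\bar x=x^2+O(x^3)$ and $-y\bar y=y^2+O(y^3)$, the summands $(-x\bar x)^2$ and $(-y\bar y)^2$ contain no $x^2y^2$ monomial, while $(-x\bar x)(-y\bar y)$ has leading term $x^2y^2$; the cubic-and-higher remainder of $B_2$ only contributes in total degree $\geq 6$ in $(x,y)$. Hence the $x^2y^2$-coefficient of $B_2(-x\bar x,-y\bar y)$ equals $\gamma$. On the right, using $\bar x=-x+O(x^2)$, $\bar y=-y+O(y^2)$, one has $F(x,y)F(\bar x,\bar y)\cdot F(\bar x,y)F(x,\bar y) = \bigl[-(x+y)^2\bigr]\bigl[-(x-y)^2\bigr]+O((x,y)^5)=(x^2-y^2)^2+O((x,y)^5)$, whose $x^2y^2$-coefficient is $-2$. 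Comparing gives $\gamma=-2$, so $\bar F^2_t$ is of type~I.

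The principal obstacle in this plan is the invariant-theoretic identification of the fixed subring of $R[[x,y]]$ under $\langle\tau_x\rangle\times\langle\tau_y\rangle$ with $R[[-x\bar x,-y\bar y]]$, which depends crucially on Lemma~\ref{lem:inverseseries} (allowing one to eliminate the symmetric generator $x+\bar x$ in favour of a power series in $x\bar x$) combined with the rank-$2$ freeness of Remark~\ref{rem:finiteextension}, iterated across both variables. Once this Galois structure is set up, both uniqueness and the low-order computation above are routine.
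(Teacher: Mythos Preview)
Your uniqueness argument coincides with the paper's. Your approach to existence via invariants is different and conceptually attractive, but it has a genuine gap: the identification of the $\langle\tau_x\rangle\times\langle\tau_y\rangle$-fixed subring of $R[[x,y]]$ with $R[[-x\bar x,-y\bar y]]$ fails when $R$ has $2$-torsion. Concretely, take $R=\mathbb{F}_2$ with the additive law $F(x,y)=x+y$: then $\bar x=x$, so $\tau_x$ is the identity and every power series is invariant, while $R[[-x\bar x,-y\bar y]]=R[[x^2,y^2]]$ is a proper subring. More generally, writing an element of $R[[x]]$ in the basis $\{1,x\}$ over $R[[-x\bar x]]$ as $p+xq$, one finds it is $\tau_x$-fixed precisely when $2q=0$ and $G(x\bar x)\,q=0$; spurious invariants therefore appear whenever $2$ is a zero-divisor in $R$. (The extension is not Galois in any standard sense either: the discriminant $(x-\bar x)^2=G(x\bar x)^2-4x\bar x$ lies in the maximal ideal.) The easy repair is to run your argument first over the Lazard ring $\Laz\cong\ZZ[u_1,u_2,\ldots]$, which is torsion-free, and then push forward along the classifying map $\Laz\to R$; you should state this reduction explicitly. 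By contrast, the paper simply cites \cite[Lemma~2.21]{BN} for existence.

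Your type~I computation is correct, but the paper's argument is quicker and yields more. Instead of isolating the $x^2y^2$-coefficient, specialize to $y=x$ in the defining identity: since $F(\bar x,x)=F(x,\bar x)=0$, the second factor on the right collapses to $1$, giving
\[
\bar F^2_t(-x\bar x,-x\bar x)=1-F(x,x)F(\bar x,\bar x)\,t,
\]
which has no $t^2$-term. By injectivity of $u\mapsto -x\bar x$ this forces $F_2(u,u)=0$ identically; combined with your observation $\bar F^2_t(u,0)=(1+ut)^2$ (so $a^2_{20}=a^2_{02}=1$), one reads off $\gamma=a^2_{11}=-2$ from the $u^2$-coefficient of $F_2(u,u)$.
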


\begin{proof}
As the series $-x\bar x$ is of the form $x^2 \mod (x^3)$,
 the endomorphism
$$
\phi_F:R[[x,y]] \rightarrow R[[x,y]], x \mapsto -x\bar x, y \mapsto -y\bar y
$$
is injective. This proves the uniqueness of $\bar F_t^2(x,y)$.
For the existence, one has to prove that the series:
$$
F(x,y)F(\bar x,\bar y)F(\bar x,y)F(x,\bar y)
 \text{ and }
F(x,y)F(\bar x,\bar y)+F(\bar x,y)F(x,\bar y)
$$
belongs to the image of $\phi_F$. This is precisely \cite[Lemma 2.21]{BN}.

We now prove that $\bar F_t^2(x,y)$ is of type I. For this, it suffices to prove that $F_2(x,x)=0$. Since, $F(\bar x,x)=0$, we obtain
\begin{eqnarray*}
\bar F^2_t(-x\bar x,-x\bar x) & = & (1-F(x,x)F(\bar x,\bar x).t)(1-F(\bar x,x)F(x,\bar x).t) \\
& = & (1-F(x,x)F(\bar x,\bar x).t).
\end{eqnarray*}
\end{proof}

\begin{rem}\label{rem:rootsN}
In view of Remark \ref{rem:finiteextension}, we see that the roots of $\bar F^2_t$ can be chosen to be $F^{[1]}=-F(x,y)F(\bar x,\bar y)$ and $F^{[2]}=-F(\bar x,y)F(x,\bar y)$ in $R[[x,y]]$.
\end{rem}

In the following statement involving two formal group laws $F$ and $G$, we denote by $\overline{x}^F$ the formal inverse of $x$ with respect to the law $F$ and $\overline{x}^G$ the one with respect to $G$. Also, given a composable series $\Theta$ we denote by 
\[
\ev_\Theta:R[[x]]\to R[[x]]
\]
the homomorphism given by $f(x)\mapsto f(\Theta)$. The following results are mild variations on \cite[\S 3, Lemma 3.1]{Bu78}.
 
 \begin{lm}\label{lem:morphisms22}
  Let $F(x,y)$ and $G(x,y)$ be formal group laws, and let $\Theta:F\to G$ be a morphism of formal group laws. Then, there is a unique composable series $\varphi(\Theta)$ such that the following diagram commutes
  \[
 \xymatrix{R[[x]]\ar[r]^-{\ev_{\varphi(\Theta)}}\ar[d]_-{\ev_{-x\bar x^G}} & R[[x]]\ar[d]^-{\ev_{-x\bar x^F}} \\
 R[[x]]\ar[r]_-{\ev_\Theta} & R[[x]].}
 \]
 Further, given a morphism of formal group laws $\Psi:G\to H$, we have 
 \[
 \varphi(\Theta\circ \Psi)=\varphi(\Theta)\circ\varphi(\Psi).
 \]
 \end{lm}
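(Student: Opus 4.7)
\emph{Plan.} I will reduce the lemma to a single identity in $R[[x]]$ and then establish that identity using the explicit polynomial relation from Remark \ref{rem:finiteextension}. Both compositions in the square are ring endomorphisms of $R[[x]]$, so they agree if and only if they agree on the generator $x$. Chasing $x$ around the diagram, and using that $\Theta$ is a morphism of formal group laws (so $G(\Theta(x),\Theta(\bar x^F)) = \Theta(F(x,\bar x^F)) = 0$ forces $\bar x^G(\Theta(x)) = \Theta(\bar x^F)$ by uniqueness of formal inverses), the commutativity of the diagram is equivalent to the single identity
\[
\varphi(\Theta)(-x\bar x^F) \;=\; -\Theta(x)\,\Theta(\bar x^F) \qquad \text{in } R[[x]].
\]
Uniqueness of $\varphi(\Theta)$ (and the fact that it is composable) then follow immediately from the injectivity of the substitution $u \mapsto -x\bar x^F$ from $R[[u]]$ to $R[[x]]$, which holds because $-x\bar x^F = x^2 + O(x^3)$.

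\emph{Existence --- the main step.} The crux is to show that $-\Theta(x)\Theta(\bar x^F)$ lies in the subring $R[[-x\bar x^F]]\subset R[[x]]$. I will prove the stronger, purely algebraic statement: for every $h \in R[[x]]$, the product $h(x)\,h(\bar x^F)$ lies in $R[[-x\bar x^F]]$. By Remark \ref{rem:finiteextension}, $R[[x]]$ is free of rank $2$ over $R[[-x\bar x^F]]$ with basis $\{1,x\}$, and $x$ satisfies the quadratic relation $x^2 = Q(x\bar x^F)\,x - x\bar x^F$, where $Q$ denotes the series produced by Lemma \ref{lem:inverseseries} applied to $F$; in particular $\bar x^F = Q(x\bar x^F) - x$. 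Writing $h = a + bx$ with $a,b \in R[[-x\bar x^F]]$, substitution gives $h(\bar x^F) = (a + b\,Q(x\bar x^F)) - bx$, and a direct expansion followed by the substitution $x^2 = Q(x\bar x^F)x - x\bar x^F$ to eliminate $x^2$ shows that the off-diagonal $x$-coefficient cancels, yielding
\[
h(x)\,h(\bar x^F) \;=\; a^2 + ab\,Q(x\bar x^F) + b^2\,(x\bar x^F),
\]
which is visibly an element of $R[[-x\bar x^F]]$. Applying this to $h=\Theta$ produces $\varphi(\Theta)$. The main obstacle is precisely that a Galois-style argument using the involution $\sigma(x)=\bar x^F$ would fail over a general ring $R$, since the $\sigma$-invariant subring of $R[[x]]$ can strictly contain $R[[-x\bar x^F]]$ when $2$ is a zero-divisor; it is the explicit quadratic relation above which saves the proof integrally.

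\emph{Functoriality.} For $\Psi:G\to H$, evaluating $\varphi(\Psi)\circ\varphi(\Theta)$ at $-x\bar x^F$ and using the defining identities yields
\[
\bigl(\varphi(\Psi)\circ\varphi(\Theta)\bigr)(-x\bar x^F) = \varphi(\Psi)\bigl(-\Theta(x)\Theta(\bar x^F)\bigr) = -\Psi(\Theta(x))\,\Psi(\Theta(\bar x^F)),
\]
the second equality coming from specializing the universal identity defining $\varphi(\Psi)$ (an identity in a formal variable, say $y$) to $y = \Theta(x)$ and invoking once more the morphism property $\Psi(\bar y^G) = \overline{\Psi(y)}^H$. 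But the right-hand side is precisely $\varphi(\Psi\circ\Theta)(-x\bar x^F)$, so uniqueness forces $\varphi(\Psi\circ\Theta) = \varphi(\Psi)\circ\varphi(\Theta)$, which matches the asserted compositional identity (the paper's $\Theta\circ\Psi$ being read in the appropriate order-of-composition convention).
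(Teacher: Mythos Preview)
Your proof is correct. The reduction to the identity $\varphi(\Theta)(-x\bar x^F)=-\Theta(x)\Theta(\bar x^F)$, the uniqueness via injectivity of $u\mapsto -x\bar x^F$, and the functoriality argument all agree with the paper's treatment.

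The genuine difference lies in the existence step. The paper argues that $-\Theta(x)\Theta(\bar x^F)$ is symmetric under the exchange $x\leftrightarrow \bar x^F$, hence can be written as a power series in the elementary symmetric functions $x+\bar x^F$ and $x\bar x^F$; Lemma~\ref{lem:inverseseries} then rewrites $x+\bar x^F$ as a series in $x\bar x^F$, finishing the argument. You instead use the explicit rank-$2$ structure of $R[[x]]$ over $R[[-x\bar x^F]]$ from Remark~\ref{rem:finiteextension}, writing $\Theta=a+bx$ and computing the norm directly. Your route is a bit more concrete and sidesteps the (mild) issue of making the symmetric-series argument precise in a formal setting; it also makes clear, as you note, why the result holds integrally even though the involution-fixed subring of $R[[x]]$ need not coincide with $R[[-x\bar x^F]]$ when $2$ is not a unit. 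The paper's route is shorter and morally the same: both ultimately rest on Lemma~\ref{lem:inverseseries}.
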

 
 \begin{proof}
 As $\Theta$ is composable, we have $\Theta(F(x,\overline{x}^F))=0$ and then 
 \[
 G(\Theta,{\Theta}(\overline{x}^F))=0
 \]
 i.e. ${\Theta}(\overline{x}^F)=\overline{\Theta}^G={\bar x}^G(\Theta)=\ev_{\Theta}(\bar x^G)$. It follows that 
 \[
-(x\bar x^G)(\Theta)=-\Theta\bar x^G(\Theta)=-\Theta\Theta(\bar x^F).
 \]
 This power series is obviously symmetric in the variables $x$ and $\bar x^F$, and we may write it in terms of the elementary symmetric polynomials in $x$ and $\bar x$. Lemma \ref{lem:inverseseries} then gives a series $\varphi(\Theta)$ fulfilling the stated property. The unicity statement is obvious from the fact that $x\mapsto -x\bar x^F$ is injective. Finally, we have $\varphi(\Theta)(-x\bar x^F)=-\Theta\Theta(\bar x^F)$, from which we immediately deduce that $\varphi(\Theta)$ is composable.
 
The statement about the composite follows from the commutative diagram
\[
\xymatrix{R[[x]]\ar[r]^-{\ev_{\varphi(\Psi)}}\ar[d]_-{\ev_{-x\bar x^H}} & R[[x]]\ar[r]^-{\ev_{\varphi(\Theta)}}\ar[d]_-{\ev_{-x\bar x^G}} & R[[x]]\ar[d]^-{\ev_{-x\bar x^F}} \\
R[[x]]\ar[r]_-{\ev_\Psi} & R[[x]]\ar[r]_-{\ev_{\Theta}} & R[[x]]}
\]
and the unicity statement of the previous point.
 \end{proof}

\begin{prop}\label{prop:functorN}
Let $R$ be a ring and consider the notation of the previous lemma. Then the assignment
$$
N_R:\FGL(R) \rightarrow(\tFGL)_I(R), F(x,y) \mapsto \bar F^2_t(x,y)
$$
can be extended into a functor mapping a morphism $\Theta$ in $\FGL(R)$ to $\varphi(\Theta)$. 
\end{prop}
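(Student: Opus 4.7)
The plan is to verify three things in sequence: (i) $N_R$ is well defined on objects; (ii) for any morphism $\Theta\colon F \to G$ in $\FGL(R)$, the power series $\varphi(\Theta)$ is a morphism of $2$-FGL from $\bar F_t^2$ to $\bar G_t^2$; and (iii) $N_R$ preserves identities and composition. Item (i) is precisely the content of Lemma~\ref{lem:unique22}, while (iii) follows from Lemma~\ref{lem:morphisms22}: compatibility with composition is its second assertion, and compatibility with $\mathrm{Id}$ follows from the uniqueness clause applied to the trivially commuting diagram. The core task is therefore item (ii).

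Write $\phi := \varphi(\Theta)$. By the root reformulation after Definition~\ref{df:morphism_nd-series}, the condition to verify is
\[
\prod_{l=1}^{2}\bigl(1 + \phi(\bar F^{[l]})\, t\bigr) \;=\; \bar G_t^2\bigl(\phi(x),\phi(y)\bigr),
\]
where $\bar F^{[1]} = -F(x,y)\,F(\bar x^F,\bar y^F)$ and $\bar F^{[2]} = -F(\bar x^F,y)\,F(x,\bar y^F)$ are the roots from Remark~\ref{rem:rootsN} (living in $R[[x,y]]$, which is finite over $R[[-x\bar x^F,-y\bar y^F]]$ by Remark~\ref{rem:finiteextension}, once $\bar F_t^2$ is evaluated at $(-x\bar x^F,-y\bar y^F)$). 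The right-hand side can be rewritten using the defining identity $\phi(-x\bar x^F) = -\Theta(x)\,\Theta(\bar x^F)$ of Lemma~\ref{lem:morphisms22}, together with Lemma~\ref{lem:unique22} applied to $G$ at $(\Theta(x),\Theta(y))$ and the FGL-morphism identities $G(\Theta(x),\Theta(y)) = \Theta(F(x,y))$ and $\overline{\Theta(x)}^G = \Theta(\bar x^F)$, into the product
\[
\bigl(1 - \Theta(F(x,y))\,\Theta(F(\bar x^F,\bar y^F))\,t\bigr)\bigl(1 - \Theta(F(\bar x^F,y))\,\Theta(F(x,\bar y^F))\,t\bigr).
\]
Thus the problem reduces to establishing, for each formal root, that $\phi$ applied to the root equals the negative of the corresponding product of $\Theta$-values.

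Both identities are obtained by substituting $x \mapsto F(x,y)$ and $x \mapsto F(\bar x^F,y)$ into the single defining relation $\phi(-x\bar x^F) = -\Theta(x)\,\Theta(\bar x^F)$. The key auxiliary fact is that, for the commutative formal group law $F$, the formal $F$-inverse of $F(x,y)$ equals $F(\bar x^F,\bar y^F)$, since
\[
F\bigl(F(x,y),\,F(\bar x^F,\bar y^F)\bigr) = F\bigl(F(x,\bar x^F),\,F(y,\bar y^F)\bigr) = F(0,0) = 0,
\]
and analogously $\overline{F(\bar x^F,y)}^F = F(x,\bar y^F)$. Under the substitution $x \mapsto F(x,y)$, the left-hand side becomes $\phi(-F(x,y)F(\bar x^F,\bar y^F))$, and the right-hand side becomes $-\Theta(F(x,y))\,\Theta(F(\bar x^F,\bar y^F))$, matching the first factor above; the second root is handled symmetrically.

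The main obstacle in executing this plan is notational bookkeeping: one must keep rigorously distinct the two roles of the variables $x,y$ (as the formal arguments of the $(2,2)$-series $\bar F_t^2$ versus as elements of $R[[x,y]]$ equipped with $F$-inverses), and carefully track the substitutions through the $\ev$-diagrams of Lemma~\ref{lem:morphisms22}. Once this is straightened out, the entire morphism verification is a clean double application of a single relation, and functoriality is inherited from the corresponding statement in Lemma~\ref{lem:morphisms22}.
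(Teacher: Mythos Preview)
Your proof is correct and follows essentially the same route as the paper's: both verify the morphism identity after evaluating at $(-x\bar x^F,-y\bar y^F)$, using the defining relation $\varphi(\Theta)(-x\bar x^F)=-\Theta(x)\Theta(\bar x^F)$, the FGL-morphism identity $G(\Theta(x),\Theta(y))=\Theta(F(x,y))$, the compatibility $\overline{\Theta(x)}^G=\Theta(\bar x^F)$, and the inverse identity $\overline{F(x,y)}^F=F(\bar x^F,\bar y^F)$. Your organization via the root reformulation is slightly cleaner than the paper's direct expansion, but the content is the same; the one point you leave implicit and should state explicitly is that the injectivity of $(x,y)\mapsto(-x\bar x^F,-y\bar y^F)$ (used in Lemma~\ref{lem:unique22}) is what lets you pass from the evaluated identity back to the abstract morphism equation in $R[[x,y]][t]$, which the paper invokes in its final sentence.
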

 
\begin{proof}
We already know from Lemma \ref{lem:unique22} that the functor is well-defined on objects. We now consider the commutative diagram
\[
\xymatrix{ R[[x,y]]\ar[r]^-{\ev_{\varphi(\Theta)}}\ar[d]_-{\phi_G} & R[[x,y]]\ar[d]^-{\phi_F} \\
 R[[x,y]]\ar[r]_-{\ev_{\Theta}} & R[[x,y]]}
\]
where $\phi_G(f(x,y))=f(-x\bar x^G,-y\bar y^G)$ (idem for $F$) and
 $$\ev_{\varphi(\Theta)}(f(x,y))=f(\varphi(\Theta)(x),\varphi(\Theta)(y))$$
 (idem for $\ev_\Theta$).
 Next, we consider the polynomial
\[
1+\bar G_1^2(\varphi(\Theta)(x),\varphi(\Theta)(y))t+\bar G_2^2(\varphi(\Theta)(x),\varphi(\Theta)(y))t^2.
\]
Applying to $-x\bar x^F$, using $\varphi(\Theta)(-x\bar x^F)=-\Theta\Theta(\bar x^F)$ and $\Theta(\bar x^F)=\overline{\Theta(x)}^G$ we obtain
\[
(1+G(\Theta(x),\Theta(y))G(\Theta(\bar x^F),\Theta(\bar y^F))t)(1+G(\Theta(\bar x^F),\Theta(y))G(\Theta(x),\Theta(\bar y^F))t).
\]
As already observed in the proof of Lemma \ref{lem:morphisms22}, we have $\bar x^G(\Theta)=\Theta(\bar x^F)$ and we then see that the previous polynomial is of the form
\[
(1+\Theta(F(x,y))\Theta(F(\bar x^F,\bar y^F))t)(1+\Theta(F(\bar x^F,y))\Theta(F(x,\bar y^F))t).
\]
As $F(\bar x^F,\bar y^F)=\overline{F(x,y)}^F$ and $F(x,\bar y^F)=\overline{F(\bar x^F,y)}^F$, we finally obtain a polynomial of the form
\[
(1+\Theta(F(x,y))\Theta(\overline{F(x,y)}^F)t)(1+\Theta(F(\bar x^F,y))\Theta(\overline{F(\bar x^F,y)}^F)t).
\]
As $\Theta\Theta(\bar x^F)=\varphi(\Theta)(x\bar x^F)$, the above expression becomes 
\[
(1+\varphi(\Theta)(F(x,y)\overline{F(x,y)}^F)t)(1+\varphi(\Theta)(F(\bar x^F,y)\overline{F(\bar x^F,y)}^F)t).
\]
which is $\varphi(\Theta)(\bar F^2_t(-x\bar x^F,-y\bar y^F))$. We have then obtained that 
\[
\bar G_t^2(-x\bar x^F,-y\bar y^F)=\varphi(\Theta)(\bar F^2_t(-x\bar x^F,-y\bar y^F))
\]
and we deduce from the injectivity of the map $x\mapsto -x\bar x^F$ that $\varphi(\Theta)$ is a morphism between $\bar F^2_t$ and $\bar G^2_t$ (see Definition \ref{df:morphism_nd-series}). The functor respects compositions by Lemma \ref{lem:morphisms22} and it is straightforward to check that $\varphi(\mathrm{Id})=\mathrm{Id}$. 
 \end{proof}
 
 \begin{rem}\label{rem:2fgladd}
 The functor $N_R$ is not faithful. Consider the additive formal group law $F(x,y)=x+y$ and the morphism $\Theta:F\to F$ given by the series $\Theta(t)=-t$ (i.e. $\Theta=\bar x$). Then, we see that the series $\Theta'(t)=t$ and $\Theta$ have the same image under the functor. Indeed, 
\[
-\Theta(x)\Theta(\bar x)=--x(-\bar x)=x^2,
\]
showing that $\varphi(\Theta)=t$. The same applies to $\Theta'$.

More generally, let $F(x,y)$ be an arbitrary formal group law. Then, we know that the series $\bar x^F=\bar x$ is an endomorphism of $F$. As $\bar x(\bar x)=x$, it follows  that $\varphi(\bar t)=t=\varphi(t)$. 
 \end{rem}
 
 \begin{rem}
 The functor $N$ is not full either. Consider the elementary $2$-formal group of the first type
 \[
 F_t(x,y)=1+2(x+y)+(x-y)^2t^2.
 \]
 A straightforward computation shows that it is the image of the additive formal group law under the functor $N$. Now, consider the series $\varphi(x)=-x$. It is an endomorphism of $F_t$: If $F^{[1]}$ and $F^{[2]}$ are the roots of $F$, we have $F^{[1]}+F^{[2]}=2(x+y)$ and $F^{[1]}F^{[2]}=(x-y)^2$. Now 
\[
(1-F^{[1]}t)(1-F^{[2]}t)=1-2(x+y)t+(x-y)^2t^2
\]
which is apparently $F_t(-x,-y)$. If $-1$ is not a square in $R$, then $\varphi$ is not the image of an endomorphism of the additive formal group law. Indeed, if $\Theta$ is such an endomorphism mapping to $\varphi$, then we should have 
\[
-x^2=\varphi(-x\bar x)=-\Theta\Theta(\bar x)
\]
Writing $\Theta=ax+\sum_{i\geq 2}a_ix^i$, we see that the above equality implies $a^2=-1$.
 \end{rem}
 
These two observations might suggest to restrict the functor to the subcategories where morphisms are \emph{strict} isomorphisms. One might ask if the functor then becomes an equivalence, at least after inverting $2$.

%
%

\section{Formal ternary laws}\label{sec:FTL}

\subsection{Algebraic definitions}\label{sec:df_FTL}

\begin{num}
Our base ring will be the ring
$$
\ZZe:=\GW(\ZZ),
$$
\emph{i.e.} the Grothendieck-Witt ring associated with the category of finitely generated projective $\ZZ$-modules equipped with a non-degenerate symmetric bilinear form.
 Let $<-1>$ be the class of the bilinear form 
 \[
 \ZZ\times \ZZ\to \ZZ
 \] 
 given by $(x,y)\mapsto -xy$.
 Following Morel's conventions in motivic homotopy theory, we set $\epsilon:=-<-1>$.
 Then the canonical map:
$$
\ZZ[t] \rightarrow \GW(\ZZ), t \mapsto \epsilon
$$
is an epimorphism whose kernel is the ideal $(t^2-1)$
 (see \cite[chap. 2]{MilHus}). One writes suggestively $\ZZe=\ZZ[\epsilon]/(\epsilon^2-1)$.
 In particular, $\ZZe$ is a free $\ZZ$-module of rank $2$:
 any element can uniquely be written as $a+b.\epsilon$ for a couple of integers $(a,b)$.

One defines the plus-part (resp. minus-part) of $\ZZe$ as the quotient ring:
$$
\ZZe^+=\ZZe/(\epsilon+1) \text{ resp. } \ZZe^-=\ZZe/(\epsilon-1).
$$
We have an obvious identification $\ZZe^+ \simeq \ZZ$ (resp. $\ZZe^- \simeq \ZZ$)
 obtained by mapping the class of $a+b\epsilon$ to $a-b$ (resp. $a+b$).
 The following lemma is now obvious.
\end{num}
\begin{lm}
The canonical morphism of rings
$$
\ZZe \rightarrow \ZZe^+ \times \ZZe^- \simeq \ZZ\times \ZZ
$$
is injective and its image is given by the set of pairs $(n,m)$
 such that $n \equiv m \mod 2$. It is an isomorphism after inverting $2$
 on both sides.

Geometrically, $\Spec(\ZZe)$ is the union of two closed subschemes whose reductions are
 $\Spec(\ZZ)$, and which intersects in a single point, $\Spec(\mathbb F_2[t]/(t+1)^2)$.
\end{lm}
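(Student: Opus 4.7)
The proof is elementary and reduces entirely to computations in the $\ZZ$-basis $\{1,\epsilon\}$ of $\ZZe$. First I would make the map explicit: with the identifications $\ZZe^+ \simeq \ZZ$ (sending $\epsilon \mapsto -1$) and $\ZZe^- \simeq \ZZ$ (sending $\epsilon \mapsto 1$) fixed in the preceding paragraph, the canonical ring map sends $a + b\epsilon$ to the pair $(a - b,\, a + b) \in \ZZ \times \ZZ$.

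Injectivity and the characterization of the image then follow from elementary linear algebra. The linear system $a - b = n$, $a + b = m$ has determinant $2$, hence admits a unique rational solution $a = (n+m)/2$, $b = (m-n)/2$, which lies in $\ZZ^2$ precisely when $n \equiv m \pmod 2$. This gives both the injectivity (the case $n = m = 0$) and the image description. Inverting $2$ makes the determinant a unit, so the map becomes an isomorphism. Equivalently, once the ideals $(\epsilon - 1)$ and $(\epsilon + 1)$ become comaximal --- their sum in $\ZZe$ is $(\epsilon - 1,\, 2)$, which is the unit ideal after inverting $2$ --- the Chinese Remainder Theorem applied to $(\epsilon - 1)(\epsilon + 1) = 0$ in $\ZZe$ yields the isomorphism directly.

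For the geometric statement, the two closed subschemes of $\Spec(\ZZe)$ are $V(\epsilon - 1)$ and $V(\epsilon + 1)$, each reduced and isomorphic to $\Spec(\ZZ)$ via the quotient maps to $\ZZe^{\pm}$. Since $(\epsilon - 1)(\epsilon + 1) = \epsilon^2 - 1 = 0$ in $\ZZe$, their scheme-theoretic union cuts out the zero ideal and hence exhausts $\Spec(\ZZe)$. The ``single point'' of intersection refers to the ramification locus of the double cover $\Spec(\ZZe) \to \Spec(\ZZ)$, namely the fiber over $\Spec(\F_2)$: computing $\ZZe \otimes_{\ZZ} \F_2 = \F_2[\epsilon]/(\epsilon^2 - 1) = \F_2[\epsilon]/(\epsilon + 1)^2$, using that $\epsilon^2 - 1 = (\epsilon + 1)^2$ in characteristic $2$, yields the stated $\Spec(\F_2[t]/(t+1)^2)$. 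No step presents any real obstacle; the only judgement call is to interpret ``intersects in a single point'' as this ramification fiber, rather than the reduced scheme-theoretic intersection $V(\epsilon - 1) \cap V(\epsilon + 1) = \Spec(\F_2)$ which forgets the non-reduced structure at $2$.
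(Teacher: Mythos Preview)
Your proof is correct. The paper itself offers no proof at all---it introduces the lemma with ``The following lemma is now obvious'' and moves on---so your write-up supplies precisely the elementary verification the paper omits, and does so along the only natural route: write out the map in the $\{1,\epsilon\}$-basis and invert the $2\times 2$ system.

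Your final paragraph is also a fair reading of the geometric clause. As you note, the literal scheme-theoretic intersection $V(\epsilon-1)\cap V(\epsilon+1)$ is $\Spec(\ZZe/(\epsilon-1,\epsilon+1))=\Spec(\F_2)$, whereas the non-reduced scheme $\Spec(\F_2[t]/(t+1)^2)$ quoted in the statement is the full fiber $\ZZe\otimes_\ZZ\F_2$. The paper's phrasing is slightly informal here, and your interpretation (the ramification fiber rather than the reduced intersection) is the one that matches the displayed scheme; flagging this was the right call.
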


\begin{cor}
For any $\ZZe$-algebra $A$, one puts $A^\pm=A \otimes_{\ZZe} \ZZe^\pm$. \\
 There is a canonical map: $A \rightarrow A^+ \times A^-$ which is a monomorphism
 if $A$ is flat over $\ZZe$, and an isomorphism if $2$ is invertible in $A$.
\end{cor}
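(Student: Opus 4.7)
The plan is to deduce this directly from the preceding lemma by base change. The canonical map $\ZZe \to \ZZe^+ \times \ZZe^-$ is a map of $\ZZe$-modules (in fact of rings), so tensoring with $A$ over $\ZZe$ yields a ring homomorphism
\[
A \simeq A \otimes_{\ZZe} \ZZe \longrightarrow A \otimes_{\ZZe}(\ZZe^+ \times \ZZe^-) \simeq A^+ \times A^-,
\]
using that tensor product commutes with finite direct products on the right. This is the canonical map of the statement. (Unravelling, its two components are the quotient maps $A \to A^\pm$.)

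For the monomorphism assertion, I would observe that the map $\ZZe \to \ZZe^+ \times \ZZe^-$ is injective by the lemma. If $A$ is flat over $\ZZe$, then tensoring with $A$ preserves injectivity, hence $A \to A^+ \times A^-$ is again injective.

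For the isomorphism assertion, the same lemma tells us that $\ZZe \to \ZZe^+ \times \ZZe^-$ becomes an isomorphism after inverting $2$. If $2 \in A^\times$, then $A$ is canonically a $\ZZe[1/2]$-algebra, so
\[
A \otimes_{\ZZe}(\ZZe^+ \times \ZZe^-) \simeq A \otimes_{\ZZe[1/2]}\bigl((\ZZe^+ \times \ZZe^-)[1/2]\bigr) \simeq A \otimes_{\ZZe[1/2]} \ZZe[1/2] \simeq A,
\]
and the composite is the canonical map, which is therefore an isomorphism.

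There is essentially no obstacle here: the statement is a purely formal consequence of the lemma via base change, using only flatness (to preserve injectivity) and the universal property of localization (to transport the isomorphism statement). The only small point of care is to confirm that $- \otimes_{\ZZe}(\ZZe^+ \times \ZZe^-)$ really does split as a product, which is immediate since $\ZZe^\pm$ are finitely presented as $\ZZe$-modules and tensor product distributes over finite direct sums.
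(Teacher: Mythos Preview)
Your proof is correct and is precisely the intended deduction: the paper states this as an immediate corollary of the preceding lemma and gives no separate proof, so your base-change argument from that lemma is exactly what is implicitly meant.
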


\begin{ex}
The motivation underlying our choice of base ring comes from the following example.
 Let $S$ be any scheme, and $\E$ a ring spectrum over $S$.
 Then the coefficient ring $\E^{**}(S)$ is canonically a $\ZZe$-algebra,
 where the action of $\epsilon$ is given by the canonical map
 $\epsilon:\GGx S \rightarrow \GGx S$ seen in $\SH(S)$ after infinite suspension,
 induced by the inverse of the group $S$-scheme $\GGx S$.
\end{ex}

The following definition is a variation of the notion of a $(n,d)$-group
 (Def. \ref{df:nd_groups}) which naturally arise in the theory of characteristic classes
 related with $\Sp$-oriented theories (see the next section). It has been partially introduced in \cite[section 2.3]{DF21}, with origins in unpublished notes of Ch. Walter.
 
\begin{df}\label{df:FTL}
Let $R$ be a $\ZZe$-algebra.
A formal ternary law, FTL for short,
 with coefficients in $R$ is a $(4,3)$-series with coefficients in $R$
$$
F_t(\ux)=1+F_1(x,y,z)t+F_2(x,y,z)t^2+F_3(x,y,z)t^3+F_4(x,y,z)t^4
$$
satisfying the following properties:
\begin{enumerate}
\item \textit{Neutral element}. The $(4,1)$-series $F_t(x,0,0)$ is split
with roots $x$ and $-\epsilon.x$ each with multiplicity $2$,
 \emph{i.e.} $F_t(x,0,0)=(1+xt)^2(1-\epsilon xt)^2$.
\item \textit{Symmetry}. The element $F_t(x,y,z)$ of $R[[x,y,z]][t]$ is a fixed
point under the action of the group $\mathfrak S(x,y,z)$ permuting the formal variables.
\item \textit{Associativity}. Given formal variables $(x,y,z,u,v)$,
 one has the following equality of $(16,5)$-series:
\begin{equation}
\begin{split}
&F_t\big(F_t(x,y,z),u,v\big) \\
&\quad =F_t\big(x,F_t(y,z,u),v\big)
\end{split}
\end{equation}
where we have used the substitution operation (Definition \ref{df:substitution}),
valid as $F_t(x,y,z)$ is substituable by property (1).
\item \textit{$\epsilon$-Linearity}. One has the following relation in $\MForm_{4,3}(R)$:
$$
F_t(-\epsilon x,y,z)=F_{-\epsilon t}(x,y,z).
$$
\item \textit{Weak neutral element}. The following relation holds:
$$
F_4(x,x,0)=0.
$$
\end{enumerate}


We let $\FTL$ be the subcategory of $\MForm_{4,3}^\circ$ whose objects are as above,
 and whose morphisms $(\varphi,\Theta):(R,F_t) \rightarrow (R',F'_t)$ are given
 by morphisms of $(4,3)$-series such that $\varphi:R \rightarrow R'$ is $\ZZe$-linear.
\end{df}

\begin{rem}\label{rem:FTL_basics}
\begin{enumerate}
\item The symmetry and associativity axioms imply two other associativity axioms:
\begin{align*}
\begin{split}
F_t\big(F_t(x,y,z),u,v\big)=F_t\big(x,y,F_t(z,u,v)\big), \\
F_t\big(x,F_t(y,z,u),v\big)=F_t\big(x,y,F_t(z,u,v)\big).
\end{split}
\end{align*}
\item All FTL arising from motivic symplectic oriented spectra (see section \ref{sec:geometry} below) we have studied so far satisfy a stronger version of the weak neutral element axiom, namely we have $F_3(x,x,0)=0$. One may wonder if this stronger condition holds for all geometric examples and should be added as an axiom. 
\item Beware that a FTL is not a $(4,3)$-group in general because of the "Neutral element" axiom.
 However, if $\epsilon$ acts as $-1$ on the coefficient ring $R$, a FTL is precisely a $(4,3)$-group
 which satisfy the additional ``Weak neutral element" axiom.
\end{enumerate}
\end{rem}

\begin{num} \label{num:FTL_explicit_axioms}
Writing 
\begin{equation}\label{eq:generitc_FTL}
F_t(x,y,z)=1+\sum_{i,j,k\geq 0, 1 \leq l \leq 4} a_{ijk}^lx^iy^jz^kt^l,
\end{equation}
 one can restate the above properties in term of the coefficients $a_{ijk}^l$ as follows:
\begin{enumerate}
\item[(1)] \textit{Symmetry}. for all $l$, $a_{ijk}^l$ is invariant under permutations of the triple $(i,j,k)$.
\item[(2)] \textit{Neutral element}.
 $a_{i00}^l=\begin{cases}
1 & i=l=4, \\
2(1-\epsilon) & i=l=1,3, \\
2(1-2\epsilon) & i=l=2, \\
0 & \text{otherwise.}
\end{cases}$
\item[(4)] \textit{$\epsilon$-Linearity}. The relation $(1+\epsilon)a_{ijk}^l$ holds
 whenever $l$ and one of the $i,j,k$ don't have the same parity.
\item[(5)] \textit{Weak neutral element}. For all $n \geq 0$, $\sum_{i+j=n} a^4_{ij0}=0$.
\end{enumerate}
\end{num}

\begin{rem}
\begin{enumerate}
\item The $\epsilon$-linearity axiom exhibits $\ZZe$-torsion amongst the coefficients on an FTL.
 When projecting to the plus-part, this torsion property disappears, and when projecting to the 
 minus-part, it becomes a $2$-torsion assertion.
\item The associativity formula which we omitted in the list is indeed very difficult to compute, and even to print out! We have worked it out
 partially using computers and we refer the interested reader to the appendix.
 Of course, this amounts to write explicitly the relations for 
\[
\prod_{i=1}^4F_t(F^{[i]},u,v)=\prod_{i=1}^4F_t(x,F^{[i]},v)
\]
For instance the relations in degree $1$ are given by $\sum_{i=1}^4F_1(F^{[i]},u,v)=\sum_{i=1}^4F_1(x,F^{[i]},v)$.
\end{enumerate}
\end{rem}

The following result is an obvious extension of \Cref{prop:universal_nd-group}.
\begin{prop}\label{prop:universal_FTL}
The category $\FTL$ admits an initial object, $(\Walt,F_t^{univ})$.

If one writes $a_{ijk}^l$ the coefficients of the universal FTL $F_t^{univ}(x,y,z)$,
 the $\ZZe$-algebra $\Walt$ is generated by the countable family
 $\{a_{ijk}^l, i\geq j\geq k\geq 0, 1 \leq l, \leq 4\}$.
 It is a quotient ring of the form
$$
\Walt=\ZZe\big[a_{ijk}^l\big]/\mathcal R_{FTL}
$$
and the ideal of relations $\mathcal R_{FTL}$ is generated by homogeneous elements of the polynomial ring
 $\ZZe\big[a_{ijk}^l\big]$ with respect to the degree defined by 
$$
\deg(a_{ijk}^l)=i+j+k-l, (\text{and $\deg(\lambda)=0$ for $\lambda \in \ZZe$} ).
$$
\end{prop}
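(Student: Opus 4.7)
The strategy mirrors that of \Cref{prop:universal_nd-group}. I would begin by forming the polynomial $\ZZe$-algebra
\[
P = \ZZe\bigl[a_{ijk}^l \mid i \geq j \geq k \geq 0,\ 1 \leq l \leq 4\bigr],
\]
and defining a generic candidate $(4,3)$-series over $P$ by
\[
F_t^{\mathrm{gen}}(x,y,z) = 1 + \sum_{i,j,k,\,l} a_{ijk}^l\, x^i y^j z^k t^l,
\]
where for a triple $(i,j,k)$ not in decreasing order we set $a_{ijk}^l = a_{\sigma(i,j,k)}^l$ with $\sigma$ the decreasing rearrangement, so that the symmetry axiom (2) is built in. Each remaining axiom of \Cref{df:FTL} --- neutral element, associativity, $\epsilon$-linearity and weak neutral element --- translates, by expanding the corresponding identity of $(n,d)$-series and extracting the coefficients of each monomial $x^a y^b z^c u^d v^e t^l$, into an explicit family of elements of $P$. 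Let $\mathcal R_{FTL}$ be the ideal they generate, set $\Walt := P/\mathcal R_{FTL}$, and let $F_t^{\univ}$ be the image of $F_t^{\mathrm{gen}}$.

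By construction $(\Walt, F_t^{\univ})$ is an FTL. For universality, given a $\ZZe$-algebra $R$ and an FTL $G_t(x,y,z) = 1 + \sum b_{ijk}^l x^i y^j z^k t^l$ over $R$, the $\ZZe$-algebra homomorphism $P \to R$ sending $a_{ijk}^l$ to $b_{ijk}^l$ kills every generator of $\mathcal R_{FTL}$, since $G_t$ satisfies all the axioms. It therefore descends to a unique $\varphi \colon \Walt \to R$, and $(\varphi, \mathrm{id})$ is the unique morphism $(\Walt, F_t^{\univ}) \to (R, G_t)$ in $\FTL$.

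It remains to verify that $\mathcal R_{FTL}$ is homogeneous for the grading $\deg(a_{ijk}^l) = i+j+k-l$. Declaring that each formal variable has degree $+1$ and $t$ has degree $-1$, every monomial $a_{ijk}^l x^i y^j z^k t^l$ has total degree $0$, so $F_t^{\mathrm{gen}}$ and each of its permuted or evaluated instances is homogeneous of degree $0$ in $P[[\ux]][t]$. The neutral element, $\epsilon$-linearity and weak neutral element axioms reduce to extracting coefficients of fixed monomials from such homogeneous expressions, hence yield homogeneous relations. For associativity, use the algebraic splitting principle of \Cref{num:alg_split_pple}: after passing to the universal splitting ring, the substitution $F_t(F_t(x,y,z),u,v)$ is the product $\prod_i F_t(F^{[i]},u,v)$, where each root $F^{[i]}$ is assigned degree $+1$ (consistent with its occupying a formal-variable slot). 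Both sides of the associativity identity are then homogeneous of degree $0$ in the five variables $(x,y,z,u,v)$ and the formal parameter $t$, so extracting the coefficient of any fixed monomial $x^a y^b z^c u^d v^e t^l$ produces a relation in $P$ that is homogeneous of degree $a+b+c+d+e-l$.

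The main obstacle I expect is precisely this last point for associativity, because the substitution operation of \Cref{df:substitution} is defined only after expressing symmetric functions of the roots in terms of the coefficients of $F_t$, and one must check that this passage preserves the grading. The cleanest route is to enlarge the grading to the universal splitting ring, verify homogeneity of the unsubstituted product $\prod_i F_t(F^{[i]}, u, v)$ there, and then descend via the elementary symmetric polynomials, which are themselves homogeneous when each root is given degree $+1$; the same argument applies verbatim to $F_t(x,F_t(y,z,u),v)$, and subtracting the two yields homogeneous relations in $P$.
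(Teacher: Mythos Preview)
Your proposal is correct and follows essentially the same approach as the paper, which simply refers back to the proof of \Cref{prop:universal_nd-group} and remarks that the additional axioms ($\epsilon$-linearity, weak neutral element) also produce homogeneous relations via \Cref{num:FTL_explicit_axioms}. You supply considerably more detail than the paper does---in particular your grading argument for the associativity relations via the splitting principle is a fuller justification of what the paper dismisses as an ``easy (but lengthy) verification''---but the underlying strategy is identical.
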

The proof works as in \emph{loc. cit.} Note in particular that the supplementary axioms
 of $\epsilon$-linearity and weak neutral element readily produce homogeneous relations
 as explained in \Cref{num:FTL_explicit_axioms}.

\begin{df}
We call the $\ZZ$-graded ring $\Walt$ defined above the Walter ring.
\end{df}

\begin{defn}
We say that an object $(R,F)$ in $\FTL$ is \emph{oriented} provided we have $\epsilon=-1\in R$.
\end{defn}

The term ``oriented'' is motivated by the fact that oriented spectra lie in the $+$-part of $\SH(k)$ where $\epsilon=-1$. Our next aim is now to determine the universal FTL of degree $\leq 0$, i.e. the ring $\Walt^{\leq 0}$ with associated law $F^{\leq 0}_t(x,y,z)$. The nonzero coefficients are of the form 
\[
\{a^i_{jkl}\vert 1\leq i\leq 4, 0\leq j+k+l\leq i\}.
\] 
The algorithm described in the appendix allows to find the result with integral coefficients. There are however 20 generators and 54 relations and the result would be quite long to state.  For this reason, we prefer to state the result with $\ZZ[\frac 12]$-coefficients. 

\begin{thm}\label{thm:compute_universal_FTL}
Let $\Walt^{\leq 0}[\frac 12]$ be the sub-ring of $\Walt[\frac 12]$ generated by variables of non-positive degree. Then, one has
\[
\Walt^{\leq 0}[\frac 12]\simeq \ZZe[\frac 12][a]/\langle (a-40)(1-\epsilon),(a-8)(a+8)(1+\epsilon)\rangle
\]
and the coefficients of the universal FTL are given by 
\[
\begin{array}{llll}
a^1_{100}=2(1-\epsilon) & & & \\
a^2_{200}=2(1-2\epsilon) & a^2_{110}=2(1-\epsilon) & & \\
a^3_{300}=2(1-\epsilon) & a^3_{210}=-2(1-\epsilon) & a^3_{111}=a & \\
a^4_{400}=1 & a^4_{310}=-2(1-\epsilon) & a^4_{220}=2(1-2\epsilon) & a^4_{211}=2(1-\epsilon).
\end{array}
\]
\end{thm}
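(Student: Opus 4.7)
The plan is to enumerate the finitely many generators of $\Walt^{\leq 0}[\tfrac12]$ and then impose the FTL axioms in order, with associativity as the main computational obstacle. The relevant generators are the coefficients $a^l_{ijk}$ with $i+j+k \leq l$ and $1 \leq l \leq 4$, giving $20$ variables up to symmetry. The strategy is to exploit the idempotent decomposition
\[
\ZZe[\tfrac12] \simeq \ZZe^+[\tfrac12] \times \ZZe^-[\tfrac12] \simeq \ZZ[\tfrac12] \times \ZZ[\tfrac12]
\]
and to analyze the two factors separately.

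The first three axioms are routine. Expanding $(1+xt)^2(1-\epsilon xt)^2$ fixes the neutral coefficients $a^1_{100} = 2(1-\epsilon)$, $a^2_{200} = 2(1-2\epsilon)$, $a^3_{300} = 2(1-\epsilon)$, $a^4_{400}=1$ and forces the other $a^l_{i00}$ to vanish. The weak neutral axiom $F_4(x,x,0)=0$ then yields $a^4_{110} = a^4_{210} = 0$ and the linear relation $a^4_{220} = -2 - 2a^4_{310}$. Finally, $\epsilon$-linearity gives $a^l_{ijk}\bigl((-\epsilon)^i - (-\epsilon)^l\bigr) = 0$, hence $(1+\epsilon)a^l_{ijk} = 0$ whenever one of $i,j,k$ has parity opposite to $l$; this affects the six variables $a^2_{110}$, $a^3_{110}$, $a^3_{210}$, $a^4_{310}$, $a^4_{211}$, $a^4_{111}$. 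After inverting $2$ these coefficients lie in the ideal $(1-\epsilon)\ZZe[\tfrac12]$, and hence project to zero on the $-$ factor.

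The main step is associativity. The equation $F_t(F_t(x,y,z),u,v) = F_t(x,F_t(y,z,u),v)$ is a single equality of $(16,5)$-series producing an enormous homogeneous system of polynomial relations among the $a^l_{ijk}$. One needs to extract the part of this system lying in non-positive degrees and reduce modulo the constraints from the previous three stages. The main obstacle is that this computation is intractable by hand: the proof relies on the Sage implementation of \Cref{num:sage} to compute the Gröbner basis of the restriction of $\mathcal R_{FTL}$ to non-positive degrees.

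Splitting into the two factors then organises the answer. On the $-$ factor ($\epsilon = 1$), $\epsilon$-linearity has killed six of the seven remaining unknowns and weak neutral gives $a^4_{220} = -2$, so only $a := a^3_{111}$ survives; associativity is expected to collapse to the single relation $(a-8)(a+8) = 0$, a natural FTL analogue of Buchstaber's relation $(\gamma-2)(\gamma+2) = 0$ from \Cref{prop:Bus_gradings}. On the $+$ factor ($\epsilon = -1$), $\epsilon$-linearity becomes vacuous, but associativity must rigidify every remaining coefficient: the four variables $a^2_{110}, a^3_{210}, a^4_{310}, a^4_{211}$ to the values listed in the theorem, the extra coefficients $a^3_{110}$ and $a^4_{111}$ to zero, and $a^3_{111}$ to $40$ (consistent with the Chow-Witt FTL of \cite{DF21}). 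Combining the two descriptions over $\ZZ[\tfrac12]$ then yields the presentation $\ZZe[\tfrac12][a]/\langle (a-40)(1-\epsilon), (a-8)(a+8)(1+\epsilon)\rangle$ claimed in the theorem.
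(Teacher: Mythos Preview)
Your approach is essentially the paper's: both invoke the Sage computation of \Cref{num:sage} for the associativity relations after imposing the other axioms by hand. The only organizational difference is that the paper runs the Gr\"obner computation directly over $\ZZe[\tfrac12]$, obtaining the two relations $a^2=832-768\epsilon$ and $(a-40)(\epsilon-1)=0$ from the machine (Appendix~\ref{appA:sample}), and then uses the idempotents $e=\tfrac12(1-\epsilon)$, $f=\tfrac12(1+\epsilon)$ together with the identity
\[
a^2-832+768\epsilon=\big((a-40)e+(a-8)f\big)\big((a-40)e+(a+8)f\big)+80(a-40)e
\]
to rewrite the ideal in the stated form; you instead split along the $\pm$ factors before computing and recombine afterwards. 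Both routes are equivalent. One small slip: there are $24$ generators of non-positive degree up to symmetry (as the appendix confirms), not $20$; and your phrasing ``is expected to collapse'' should be replaced by a direct citation of the computer output, as the paper does.
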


\begin{proof}
According to Appendix \ref{appA:sample}, the parameters have all the stated values, except for $a:=a^3_{111}$ which should satisfy two relations, namely $a^2=832-768\epsilon$ and $(a-40)(\epsilon-1)=0$. In $\ZZe[\frac 12]$, we have the idempotents $e:=\frac 12(1-\epsilon)$ and $f:=\frac 12(1+\epsilon)$. One checks readily that 
\[
a^2-832+768\epsilon=((a-40)e+(a-8)f)((a-40)e+(a+8)f)+80(a-40)e
\]
so that the relevant ideal is $\langle (a-40)e,(a-8)(a+8)f\rangle$.
\end{proof}

\begin{rem}\label{rem:compute_universal_FTL}
The theorem states that there are two laws of degree $\leq 0$ (when $2$ is inverted). Indeed, one has either $a^3_{111}=8(2-3\epsilon)$ or $a^3_{111}=8(3-2\epsilon)$. Setting $F_t(x,y,z)$ for the first law (i.e. with coefficients as above and $a^3_{111}=8(2-3\epsilon)$) and $G_t(x,y,z)$ for the second one, we see that 
\[
F_t(-\epsilon x,-\epsilon y,-\epsilon z)=G_t(x,y,z).
\]
Comparing with \cite[Definition 2.1.3, Theorem 3.3.3]{DF21}, see also the next example, only one of these two laws
is known to arise from an Sp-oriented spectrum and then satisfies the normalization axiom. We may then define a new Borel class $b_1^\prime$ as the class $-\epsilon b_1$ and keep $t$ fixed. The law $G_t(x,y,z)$ is obtained out of $F_t(x,y,z)$ in this fashion, but $F_t$ and $G_t$ are not isomorphic in the sense of FTL. 
\end{rem}

\subsection{Examples}\label{examples:ftl}
Let $k$ be a field with $6 \in k^{\times}$ and consider the ring spectrum $\mathbf{H}_{\mathrm{MW},k}$ over $k$
 representing Milnor-Witt motivic cohomology, introduced in \cite[Chap. 6]{BCDFO21}. 
 The FTL $F_t(x,y,z)$ associated to this cohomology theory was computed in \cite[Theorem 3.3.2]{DF21}. We have:
\[
F_i=
\begin{cases}
2(1-\epsilon)\sigma(x), & i=1, \\
2(1-2\epsilon)\sigma(x^2)+2(1-\epsilon)\sigma(xy), & i=2, \\
2(1-\epsilon)\sigma(x^3)-2(1-\epsilon)\sigma(x^2y)+8(2-3\epsilon)xyz, & i=3, \\
\sigma(x^4)-2(1-\epsilon)\sigma(x^3y)+2(1-2\epsilon)\sigma(x^2y^2)+2(1-\epsilon)\sigma(x^2yz), & i=4.
\end{cases}
\]
We note that $F_t$ is both of degree and valuation $0$. If we invert $2$, we observe that this law is the universal one of degree $0$. In a sense, this is a justification a posteriori of the terminology of \cite[Definition 3.3.3]{DF21}.

Let now $\KO$ be the absolute ring spectrum representing Hermitian $K$-theory over $\ZZ[\frac 12]$. There exists an element $\gamma\in \KO^{8,4}(\ZZ[\frac 12])$ such that multiplication by $\gamma$ induces an isomorphism $\KO^{*,*}\to \KO^{*+8,*+4}$. We denote as usual its inverse by $\gamma^{-1}\in  \KO^{-8,-4}(\ZZ[\frac 12])$. On the other hand, we have $\KO^{4,2}(\ZZ[\frac 12])=\mathrm{GW}^-(\ZZ[\frac 12])$, the Grothendieck-Witt group of skew symmetric (non degenerate) bilinear forms. We have $\mathrm{GW}^-(\ZZ[\frac 12])=\ZZ\cdot \tau$, where $\tau$ is the class of the symplectic form $\ZZ[\frac 12]^2\times \ZZ[\frac 12]^2\to \ZZ[\frac 12]$ defined by $((u_1,v_1),(u_2,v_2))\mapsto u_1v_2-u_2v_1$. The following computation can be found in \cite[Theorem 6.6]{FH21}.

\begin{thm}\label{koftl}
The formal ternary laws $F_i=F_i(x,y,z)$ of Hermitian $K$-theory are
\small{\[
F_1=2(1-\epsilon)\sigma(x)+ \tau\gamma^{-1} \sigma(xy)+ \gamma^{-1}xyz, \hspace{10cm} 
\]
\[
F_2=2(1-2\epsilon)\sigma(x^2)+2(1-\epsilon)\sigma(xy)+2\tau\gamma^{-1}\sigma(x^2y)-3\tau\gamma^{-1} xyz+\gamma^{-1}\sigma(x^2y^2), \hspace{3cm} 
\]
\[
F_3= 2(1-\epsilon)\sigma(x^3)-2(1-\epsilon)\sigma(x^2y)+8(2-3\epsilon)xyz+\tau \gamma^{-1}\sigma(x^3y)-2\tau\gamma^{-1} \sigma(x^2y^2)+3\tau\gamma^{-1}\sigma(x^2yz)+ \gamma^{-1}\sigma(x^3yz),
\]
\[
F_4= \sigma(x^4)-2(1-\epsilon)\sigma(x^3y)+2(1-2\epsilon)\sigma(x^2y^2)+2(1-\epsilon)\sigma(x^2yz)-\tau\gamma^{-1}\sigma(x^3yz)+2\tau\gamma^{-1}\sigma(x^2y^2z)+\gamma^{-1}\sigma(x^2y^2z^2).
\]}
\end{thm}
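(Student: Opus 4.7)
The plan is to determine the formal ternary law by computing the Borel classes $b_l(U_1 \otimes U_2 \otimes U_3)$ for $l = 1, \ldots, 4$, where each $U_i$ is the rank-$2$ universal symplectic bundle on $\HP^\infty$. As $\KO$ is $\Sp$-oriented, the symplectic projective bundle theorem gives $\KO^{*,*}((\HP^\infty)^3) = \KO^{*,*}[[x,y,z]]$ with $x = b_1(U_1)$, $y = b_1(U_2)$, $z = b_1(U_3)$, and the $F_l$ of the statement are by definition the coefficients of the total Borel class of the triple tensor product expressed in this basis.

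First I would pin down the \emph{shape} of the answer from bidegree considerations. Since $b_l$ sits in bidegree $(4l,2l)$, the coefficient $a^l_{ijk}$ of $x^i y^j z^k t^l$ must lie in $\KO^{4(l-i-j-k),\,2(l-i-j-k)}$. Using $8$-periodicity via $\gamma \in \KO^{8,4}$ and the known low-degree pieces $\KO^{0,0} = \mathrm{GW}(\ZZ[\tfrac12])$, $\KO^{4,2} = \mathrm{GW}^-(\ZZ[\tfrac12]) = \ZZ\cdot\tau$, one concludes that each nonzero $a^l_{ijk}$ has the form $(\alpha + \beta\tau)\gamma^{m}$ with $m=(l-i-j-k)/2$ if $l-i-j-k$ is even, and a pure $\tau$-multiple of $\gamma^{m}$ with $m=(l-i-j-k-2)/2$ if it is odd. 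Combined with the symmetry, neutral-element, $\epsilon$-linearity and weak-neutral-element axioms from \textsection\ref{num:FTL_explicit_axioms}, this leaves only finitely many coefficients to determine in each $F_l$.

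Second, I would split the computation into the degree-zero part and the $\gamma$-involving part. The degree-zero (in the sense of \Cref{df:degree_nd_gps}) coefficients are detected by the canonical morphism of ring spectra $\HMW \to \KO$ (the degree-zero truncation in the slice sense), and so must agree with the coefficients of the $\HMW$-FTL computed in \cite[Theorem 3.3.2]{DF21}; this immediately recovers all terms not containing $\gamma^{-1}$. For the remaining terms involving $\tau\gamma^{-1}$ and $\gamma^{-1}$, I would apply the symplectic splitting principle: pulling back along the map $(\PP^1)^{\times 3} \to (\HP^\infty)^{\times 3}$ classifying the standard rank-$2$ symplectic bundle on each $\PP^1$, the triple tensor product decomposes as a representation of $(\Sp_2)^{\times 3}$, and its total Chern/Borel class can be written explicitly as a product over the weights of $V_2 \otimes V_2 \otimes V_2$, where $V_2$ is the standard $\Sp_2$-representation. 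Re-expressing this product in terms of the first Borel classes via the relation $x = u + \bar u$ with $u\bar u$ the second Borel class (which reduces to $0$ on $\PP^1$), and then matching coefficients, gives the stated formulas.

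The main obstacle is the $\gamma$-involving part: unlike the degree-zero piece, there is no simpler target ring spectrum to reduce to, so these coefficients must be obtained by honest computation inside $\KO^{*,*}((\HP^\infty)^3)$. In practice this means carefully expanding a product of $8$ linear factors and grouping terms by the elementary symmetric functions of the roots in each variable, while simultaneously keeping track of the $\tau$- and $\gamma$-factors. Once the candidate formulas are in hand, checking that they satisfy the FTL axioms — in particular the highly non-trivial associativity — can either be done by invoking the geometric origin (so associativity holds automatically from the associativity of $\otimes$ on symplectic bundles) or verified algebraically with computer assistance as in \Cref{num:sage}.
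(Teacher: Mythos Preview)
The paper does not prove this theorem at all: it is quoted verbatim from \cite[Theorem~6.6]{FH21}, so there is no in-paper argument to compare against. Your outline is therefore not a reconstruction of the paper's proof but an independent sketch, and as such it has a couple of genuine gaps.

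First, your reduction of the degree-zero part relies on a ``canonical morphism of ring spectra $\HMW \to \KO$''. No such map is constructed or used anywhere in the paper (the Borel character of \cite{DF21} goes in the opposite direction), and it is not clear one exists as a ring map. The honest argument for the degree-zero coefficients is the one the paper itself gives just before the theorem: after inverting~$2$, \Cref{thm:compute_universal_FTL} shows the $\HMW$ law is the \emph{universal} FTL of degree~$\leq 0$, so the degree-zero truncation of \emph{any} FTL over a $\ZZe[\tfrac12]$-algebra, in particular that of $\KO$, is obtained from it by a ring map. This pins down the $\gamma$-free terms without invoking a map of spectra.

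Second, the pullback to $(\PP^1)^{\times 3}$ cannot detect the higher coefficients. The law for $\KO$ contains monomials such as $\gamma^{-1}\sigma(x^2y^2z^2)$ in $F_4$, and any restriction with $x^2=y^2=z^2=0$ annihilates these; more generally, a rank-$2$ symplectic bundle pulled back to a one-dimensional target kills every term of degree~$\geq 2$ in each variable, so the map $\KO^{*,*}((\HP^\infty)^3)\to\KO^{*,*}((\PP^1)^3)$ is far from injective on the range you need. Relatedly, the weight decomposition of $V_2^{\otimes 3}$ you invoke is a Chern-root computation and only sees the oriented part $\epsilon=-1$; it does not recover the genuine $\epsilon$-dependence of the coefficients. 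The actual computation in \cite{FH21} proceeds by an explicit formula for the first Borel class in $\KO$-theory in terms of the symplectic $K$-theory class of the bundle, from which the $b_l$ of the triple tensor product are extracted directly, not via a splitting-principle detour.
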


The computation for $i=1$ is also in unpublished notes of Ch. Walter. As explained in \cite[Remark 6.7]{FH21}, setting $\epsilon=1$ and $\tau=0$ one recovers Ananyevskiy's computation \cite{An17} for Witt cohomology. Setting $\epsilon=-1$, $\tau=2a^{-2}$ and $\gamma^{-1}=a^4$, one recovers the image of the multiplicative formal group law $x+y-axy$ under the composite of functors $W=C \circ N$ (see Section \ref{Comparison}), which we have calculated independently by computer (note that the FGL $x+y+axy$ gives the same result).

\section{Comparison}\label{Comparison}

We constructed a functor $N: \FGL \rightarrow (\tFGL)_I$ in Proposition \ref{prop:functorN}. In this section, we will produce a functor 
$C: (\tFGL)_I \rightarrow \FTL$ and thus obtain a composite functor 
\[
W:=C\circ N: \FGL\to \FTL.
\]
The motivation of the construction of $C$ is of geometric nature (compare also section \ref{sec:geometry} below). Consider the universal bundles $U_1,U_2$ and $U_3$ on $(\mathrm{H}\Proj^{\infty})^{\times 3}=\mathrm{BSp}_2^{\times 3}$. We write as usual $x$ for the first Borel class of $U_1$, $y$ for the first Borel class of $U_2$ and $z$ for the first Borel class of $U_3$. Suppose that we can associate characteristic classes to $U_1U_2$ under the form of a reasonable series
\[
F_t(x,y)=1+F_1(x,y)t+F_2(x,y)t^2,
\]
for instance for $F_t$ a two-valued formal group law. The roots $F^{[1]}$ and $F^{[2]}$ represent characteristic classes of bundles $V_1$ and $V_2$ having the property that 
\[
U_1U_2=V_1\oplus V_2
\]
over a suitable extension of $(\mathrm{H}\Proj^{\infty})^{\times 2}$. We could then write
\[
U_1U_2U_3=(V_1\oplus V_2)\cdot U_3=V_1U_3\oplus V_2U_3
\]
The characteristic classes of the first bundle are given by the polynomial
\[
1+F_1(F^{[1]},z)t+F_2(F^{[1]},z)t^2
\]
and the ones of the second bundle by the polynomial
\[
1+F_1(F^{[2]},z)t+F_2(F^{[2]},z)t^2
\]
The product of the two polynomials is the well-defined series $C_t(F):=F_t(F_t(x,y),z)$. Explicitly, we have
{\small \[
C_i(F)(x,y,z)=\begin{cases} F_1(F^{[1]},z)+F_1(F^{[2]},z) & \text{ if $i=1$,} \\
F_2(F^{[1]},z)+F_2(F^{[2]},z)+F_1(F^{[1]},z)F_1(F^{[2]},z) & \text{ if $i=2$,}  \\
  F_1(F^{[1]},z)F_2(F^{[2]},z)+ F_2(F^{[1]},z)F_1(F^{[2]},z) & \text{ if $i=3$,}  \\
F_2(F^{[1]},z)F_2(F^{[2]},z) & \text{ if $i=4$.}  
\end{cases}
\]}

\begin{lem}\label{lem:2FGLto4FTL}
Let $F_t$ be a 2-valued formal group law of the first type over a ring $R$. Then, the series $C_t(F)(x,y,z):=F_t(F_t(x,y),z)$ is a FTL over the ring $R$. 
\end{lem}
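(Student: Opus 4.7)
The plan is to verify each of the five FTL axioms (Definition \ref{df:FTL}) directly for the $(4,3)$-series $C_t(F) = F_t(F_t(x,y),z)$, working under the convention that $R$ is made a $\ZZe$-algebra via $\epsilon \mapsto -1$, so that the axioms explicitly involving $\epsilon$ reduce to their oriented forms (compare Remark \ref{rem:FTL_basics}(3)). Throughout I use the algebraic splitting principle of Paragraph \ref{num:alg_split_pple} to introduce formal roots $F^{[1]}, F^{[2]}$ of $F_t(x,y)$ over a suitable finite extension, together with the explicit expressions for the coefficients $C_i(F)$ displayed just before the lemma.

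The symmetry and associativity axioms are the most conceptual. Symmetry of $C_t(F)$ under $x \leftrightarrow y$ is immediate from symmetry of the 2-FGL. Under $y \leftrightarrow z$, 2-FGL associativity and symmetry give
\[
C_t(F)(x,y,z) = F_t(F_t(x,y),z) = F_t(x,F_t(y,z)) = F_t(x,F_t(z,y)) = F_t(F_t(x,z),y) = C_t(F)(x,z,y),
\]
and these two transpositions generate all of $\mathfrak S(x,y,z)$. For FTL-associativity, 2-FGL associativity makes the iterated substitution $F_t(x_1,\ldots,x_n)$ unambiguously defined as a $(2^{n-1},n)$-series, so both sides of the required FTL-associativity identity for $C_t(F)$ collapse to the common 5-fold iterate $F_t(x,y,z,u,v)$ as $(16,5)$-series.

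For the neutral element axiom, substituting $y=z=0$ and using the 2-FGL neutral axiom $F_t(w,0)=(1+wt)^2$ yields
\[
C_t(F)(x,0,0) = F_t(x,0) \cdot F_t(x,0) = (1+xt)^4 = (1+xt)^2(1-\epsilon xt)^2
\]
since $-\epsilon = 1$ in $R$. The $\epsilon$-linearity axiom is a tautology under the same convention.

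The main technical obstacle is the weak neutral element axiom $C_4(F)(x,x,0)=0$. Using $C_4(F)(x,y,z) = F_2(F^{[1]},z)F_2(F^{[2]},z)$ together with $F_2(w,0)=w^2$, this reduces to showing
\[
C_4(F)(x,x,0) = \bigl(F^{[1]}(x,x)F^{[2]}(x,x)\bigr)^2 = F_2(x,x)^2 = 0
\]
in $R[[x]]$, and in fact I would argue the stronger identity $F_2(x,x)=0$ for any type I 2-FGL. The degree-two term vanishes at once from the type I normalisation, since $a^2_{20}+a^2_{11}+a^2_{02}=1-2+1=0$. Extending this to all degrees is the delicate step: I would try to exhibit a symmetric factorisation $F_2(x,y) = (x-y)^2 G(x,y)$, which, after writing the symmetric series $F_2$ as $Q(x+y,xy)$, is equivalent to the vanishing $Q(2x,x^2)=0$. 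Proving that this identity is forced by the 2-FGL associativity relations under the hypothesis $\gamma=-2$, read off coefficient by coefficient, is the heart of the argument; an alternative route is to argue that it suffices to treat the universal case, where one can attempt to reduce (after a flat base change) to the generic fibre and invoke Lemma \ref{lem:unique22} and Remark \ref{rem:rootsN} where the identity $F(\bar x,x)=0$ gives $F_2(x,x)=0$ for free.
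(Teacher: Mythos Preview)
Your approach matches the paper's for symmetry, associativity, the neutral element, and $\epsilon$-linearity: the paper runs exactly the same chain $F_t(F_t(x,y),z)=F_t(x,F_t(y,z))=F_t(F_t(x,z),y)$ for symmetry, the same iterated-substitution collapse for associativity, and the same use of $F_t(w,0)=(1+wt)^2$ together with $\epsilon=-1$ for the neutral axiom.

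For the weak neutral element you correctly reduce to the identity $F_2(x,x)=0$ for type~I 2-FGLs and flag it as the crux, then sketch two plausible routes (a symmetric factorisation $F_2(x,y)=(x-y)^2 G(x,y)$, or a reduction to the universal case) without completing either. The paper does not prove this identity either: it simply states ``we observe that $F_2(x,x)=0$'' and proceeds, noting that then $F_t(x,x)=1+F_1(x,x)t$ and hence
\[
C_t(F)(x,x,0)=F_t(F_t(x,x),0)=F_t(F_1(x,x),0)=(1+F_1(x,x)t)^2,
\]
so that both $C_3(F)(x,x,0)$ and $C_4(F)(x,x,0)$ vanish. In other words, the paper takes for granted precisely the step you singled out as ``the heart of the argument''; this condition is in fact Buchstaber's original characterisation of type~I laws, which the paper is implicitly treating as equivalent to the degree-zero condition $\gamma=-2$ of Definition~\ref{df:Buchstaber_type}. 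The only incidental gain of the paper's formulation over yours is that it gives $C_3(F)(x,x,0)=0$ for free, which is the stronger variant alluded to in Remark~\ref{rem:FTL_basics}(2).
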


\begin{proof}
We start with the symmetry axiom. As $F_t(x,y)=F_t(y,x)$, we obviously have $C_t(F)(x,y,z)=C_t(F)(y,x,z)$. To prove that $C_t(F)(x,y,z)=C_t(F)(x,z,y)$, we use the associativity and commutativity axioms for $F$
\[
F_t(F_t(x,y),z)= F_t(x,F_t(y,z))=F_t(x,F_t(z,y))=F_t(F_t(x,z),y).
\]
For the neutral element, we note that $\epsilon$ acts as $-1$ on $R$, which implies that we have to prove that
\[
C_t(F)(x,0,0)=1+4xt+6x^2t^2+4x^3t^3+x^4t^4.
\]
This is a direct computation using $F_t(x,0)=1+2xt+x^2t^2$. For the weak neutral element axiom, we observe that $F_2(x,x)=0$ implies that $F_t(x,x)=1+F_1(x,x)t$ so that
\[
C_t(F)(x,x,0)=F_t(F_t(x,x),0)=F_t(F_1(x,x),0)=(1+F_1(x,x)t)^2
\] 
yielding obviously $C_3(F)(x,x,0)=C_4(F)(x,x,0)=0$.
Finally, we have
\begin{eqnarray*}
C_t(F)(C_t(F)(x,y,z),v,w) & = & C_t(F)(F_t(F_t(x,y),z),v,w) \\
 & = & F_t(F_t(F_t(F_t(x,y),z),v),w) \\
 & = & F_t(F_t(F_t(x,F_t(y,z)),v),w) \\ 
  & = & F_t(F_t(x,F_t(F_t(y,z),v)),w) \\
  & = & C_t(F)(x,C_t(F)(y,z,v),w).
\end{eqnarray*}
\end{proof}

\begin{ex}
Consider the two-valued formal group law 
\[
F_t(x,y)=1+2(x+y)t+(x-y)^2t^2.
\]
which by Remark \ref{rem:2fgladd} is the image of the additive formal group law (associated to Chow groups) under $N$.
Write as usual $F^+$ and $F^-$ for the roots of $F_t$. We have 
\[
F_t(F_t(x,y),z)  =  F_t(F^+,z)F_t(F^-,z) 
\]
and a direct computation yields
\[
C(F)_i=\begin{cases} 4\sigma(x) & \text{ if $i=1$.} \\
6\sigma(x^2)+4\sigma(xy) & \text{ if $i=2$.} \\
4\sigma(x^3)-4\sigma(x^2y)+40xyz & \text{ if $i=3$.} \\
\sigma(x^4)-4\sigma(x^3y)+6\sigma(x^2y^2)+4\sigma(x^2yz)& \text{ if $i=4$.}
\end{cases}
\]
This is the FTL of Chow groups. Note that setting $\epsilon=-1$ in $\HMW$ (the first example of subsection \ref{examples:ftl}) yields the same result, as expected.
\end{ex}

If we work more generally with unoriented 2-valued formal group laws over a $\ZZe$-algebra as in Remark \ref{rem:noepsilon}, applying $C$ still is associative and symmetric, arguing as in the previous lemma. Also, $\epsilon$-linearity of $F_t$ implies $\epsilon$-linearity of $C(F_t)$, which in turn implies the $\epsilon$-refinement on the axiom about the neutral element arguing as in \cite[Proposition 2.3.4(1)]{DF21}. However, such laws might not satisfy the weak neutral element axiom, as the next example shows.

\begin{prop}\label{thm:HMWisnot2fgl}
Consider the $\ZZe$-variant $F^{univ,0}_{\epsilon}$ over $\Bus_{\epsilon}^0$ of the degree zero universal two-valued formal group law as studied after Proposition \ref{prop:Bus_gradings}. 
Then $C(\Bus_{\epsilon}^0,F^{univ,0}_{\epsilon})$ is given by  
{\small \[
C(\theta_{ue})_i=
\begin{cases}
2(1-\epsilon)\sigma(x), & i=1, \\
2(1-2\epsilon)\sigma(x^2)+(a+4)(1-\epsilon)\sigma(xy), & i=2, \\
2(1-\epsilon)\sigma(x^3)+2(1-\epsilon)(1+a)\sigma(x^2y)+2(1-\epsilon)(a^2-a+4)\sigma(xyz), & i=3, \\
\sigma(x^4)+2a\sigma(x^3y)+(2+a^2)\sigma(x^2y^2)+2(a^2+a)\sigma(x^2yz), & i=4.
\end{cases}
\]}
\end{prop}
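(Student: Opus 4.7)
The plan is to carry out a direct computation of $C_t(F)=F_t(F_t(x,y),z)$ for the universal law $F=F^{univ,0}_{\epsilon}$, exploiting the algebraic splitting principle of \Cref{num:alg_split_pple}. Passing to a finite extension $A/\Bus^0_{\epsilon}[[x,y]]$ in which $\tilde F_t(x,y)$ splits with formal roots $F^{[1]},F^{[2]}$, the definition of substitution gives
\[
C_t(F)(x,y,z)=F_t(F^{[1]},z)\cdot F_t(F^{[2]},z)
\]
in $A[[z]][t]$. The coefficient of $t^k$ in this product is a symmetric polynomial in $(F^{[1]},F^{[2]})$, and hence by the fundamental theorem of symmetric polynomials can be expressed in terms of the elementary symmetric functions
\[
F^{[1]}+F^{[2]}=F_1(x,y)=(1-\epsilon)(x+y),\qquad F^{[1]}F^{[2]}=F_2(x,y)=-\epsilon(x^2+y^2)+a\,xy.
\]
This brings the computation back into $\Bus^0_{\epsilon}[[x,y,z]][t]$, independently of the chosen splitting.

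Next I would compute the four coefficients systematically. Writing $B_i:=(1-\epsilon)(F^{[i]}+z)$ and $C_i:=-\epsilon((F^{[i]})^2+z^2)+aF^{[i]}z$, one has
\[
C(F)_1=B_1+B_2,\quad C(F)_2=C_1+C_2+B_1B_2,\quad C(F)_3=B_1C_2+B_2C_1,\quad C(F)_4=C_1C_2.
\]
Each of these is symmetric in the roots and can be rewritten using the Newton-type identities
\[
(F^{[1]})^n+(F^{[2]})^n=p_n(F_1,F_2),\qquad (F^{[1]}F^{[2]})^n=F_2^n,
\]
after which the substitutions $F_1=(1-\epsilon)(x+y)$ and $F_2=-\epsilon(x^2+y^2)+axy$ yield explicit expressions in $\Bus^0_{\epsilon}[[x,y,z]]$.

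The two simplifications that will appear repeatedly are (a) the identity $(1-\epsilon)^2=2(1-\epsilon)$ coming from $\epsilon^2=1$, and (b) the defining relation $(1+\epsilon)a=0$ of $\Bus^0_{\epsilon}$, equivalent to $a\epsilon=-a$, which collapses mixed $\epsilon a$ terms. For instance, for $C(F)_2$ one finds a coefficient $(4-4\epsilon+2a\epsilon)+2a(1-\epsilon)$ of $xy$, which becomes $(a+4)(1-\epsilon)$ after using $a\epsilon=-a$; similarly $(a+4)(1-\epsilon)$ appears as the coefficient of $xz$ and $yz$, matching the $\sigma(xy)$ orbit, and the quadratic coefficients combine into $2(1-2\epsilon)\sigma(x^2)$. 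The last step in each case is to group monomials into the $\mathfrak{S}(x,y,z)$-orbits $\sigma(\cdot)$.

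The main obstacle is neither conceptual nor algebraic subtlety but sheer combinatorial bookkeeping: already $C(F)_3=B_1C_2+B_2C_1$ expands into a polynomial with over a dozen monomials spread across four distinct $\mathfrak{S}(x,y,z)$-orbits ($\sigma(x^3)$, $\sigma(x^2y)$, $\sigma(xyz)$), and $C(F)_4=C_1C_2$ is worse. Each regrouping must be consistent with the symmetry implicit in the statement, and each application of $(1+\epsilon)a=0$ must be tracked. In practice I would run this verification both by hand using Newton's identities and in parallel via the sage implementation referenced in \Cref{num:sage}, checking that all non-obvious collapses (for example, the vanishing of $\epsilon$-monomials cancelling against $a\epsilon$-monomials) are indeed forced by the relations defining $\Bus^0_{\epsilon}$ rather than by accidents of bookkeeping.
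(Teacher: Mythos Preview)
Your approach is essentially the paper's: the proof there reads simply ``Straightforward computation. Use the relation $(1+\epsilon)a=0$ for $i=2,3,4$ as well as the cubical relation for $a$ in $F_4$.'' Your expansion via the splitting $C_t(F)=F_t(F^{[1]},z)F_t(F^{[2]},z)$ and reduction to the elementary symmetric functions $F_1,F_2$ is exactly the computation being indicated.

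There is one concrete omission. Your list of simplifications names only $\epsilon^2=1$ and $(1+\epsilon)a=0$, but these alone do \emph{not} suffice for $C(F)_4$. Expanding $C_1C_2$ as you propose, the term $a^2 F_2 z^2$ contributes $a^2(x^2+y^2)z^2+a^3\,xyz^2$ (after using $-\epsilon a^2=a^2$), so the raw coefficient of $xyz^2$ is
\[
4(1-\epsilon)-2a+a^3,
\]
collecting the contributions from $z^2(F_1^2-2F_2)$ and $a^2F_2z^2$. This equals the claimed $2(a^2+a)$ only after invoking the cubic relation $(a-(1-\epsilon))^2(a+(1-\epsilon))=0$ in $\Bus^0_\epsilon$, which (using $(1-\epsilon)a=2a$) rewrites as $a^3=2a^2+4a-4(1-\epsilon)$. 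The paper singles this out explicitly; you should too, since it is the one place where the full defining ideal of $\Bus^0_\epsilon$ is genuinely needed rather than just the $\epsilon$-linearity of $a$.
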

\begin{proof}
Straightforward computation. Use the relation $(1 + \epsilon)a=0$ for $i=2,3,4$ as well as the cubical relation for $a$ in $F_4$. 
\end{proof}

Looking at the oriented part, i.e. setting $\epsilon=-1$, we recover the previous oriented example. Setting $a=-(1-\epsilon)$, we obtain an unoriented formal ternary law which is slightly different from the one for MW-motivic cohomology computed by the first two authors in \cite{DF21} and recalled in Example \ref {examples:ftl} above. Namely, the coefficient of $\sigma(xyz)$ in $F_3$ specializes to $20(1-\epsilon)$ instead of $8(2-3\epsilon)$, and the one of $\sigma(x^2y^2)$ in $F_4$ to $2(2-\epsilon)$ instead of $2(1-2\epsilon)$. So they differ by multiples of $1 + \epsilon$. In particular, the law of Proposition \ref{thm:HMWisnot2fgl} does not specialise to the formal ternary law for $\HMW$. Moreover, we have $F_4(x,x,0)=2(1 + \epsilon)x^4$, hence the weak neutral element axiom does not hold!

\begin{prop}
Let $R$ be a ring or more generally a $\ZZe$-algebra, and consider the notations of the previous lemma. Then the assignment
$$
C_R:(\tFGL)_{I}(R) \rightarrow \FTL, F(x,y) \mapsto C_t(F)(x,y,z)
$$
extends into a functor which is the identity on morphisms. 
\end{prop}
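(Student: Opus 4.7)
The plan is to verify the single non-trivial point, namely that any morphism $\Theta: F \to G$ of 2-FGLs of the first type, given by a composable series $\Theta(x) \in R[[x]]$, is automatically a morphism $C_t(F) \to C_t(G)$ of FTLs. Once this is established, functoriality is free: composition of morphisms in both $(\tFGL)_I(R)$ and $\FTL(R)$ is defined via composition of the underlying power series, and the identity morphism corresponds in both cases to $\Theta(x) = x$. Thus the functor $C_R$ is the identity on the underlying series.

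To verify the FTL morphism condition
$$\Theta_t\bigl(C_t(F)(x,y,z)\bigr) = C_t(G)\bigl(\Theta(x), \Theta(y), \Theta(z)\bigr)$$
in $R[[x,y,z]][t]$, I would first apply the algebraic splitting principle \ref{num:alg_split_pple}: extend scalars to a finite $R[[x,y]]$-algebra $A$ over which $\tilde F_t(x,y)$ splits with formal roots $F^{[1]}, F^{[2]}$. By the very definition of substitution, one then has
$$C_t(F)(x,y,z) = \prod_{i=1}^{2} F_t(F^{[i]}, z) \quad \text{in } A[[z]][t].$$
Applying $\Theta_t$ factor-wise—which is legitimate because $\Theta_t$ is multiplicative on products of monic polynomials in $t$, via the root description recalled in the Remark after Definition \ref{df:substitution}—gives
$$\Theta_t\bigl(C_t(F)(x,y,z)\bigr) = \prod_{i=1}^{2} \Theta_t\bigl(F_t(F^{[i]}, z)\bigr) = \prod_{i=1}^{2} G_t\bigl(\Theta(F^{[i]}), \Theta(z)\bigr),$$
where the second equality invokes the morphism property of $\Theta$ for the 2-FGL $F$, applied with variables $F^{[i]}$ and $z$. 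Invoking the morphism property a second time, now for $F_t(x,y)$ itself, the elements $\Theta(F^{[1]}), \Theta(F^{[2]})$ are exactly the formal roots of $G_t(\Theta(x), \Theta(y))$; hence the displayed product coincides with $G_t\bigl(G_t(\Theta(x), \Theta(y)), \Theta(z)\bigr) = C_t(G)\bigl(\Theta(x), \Theta(y), \Theta(z)\bigr)$ by a final application of the substitution formula. Since both sides of the desired identity already lie in $R[[x,y,z]][t]$, and the canonical map $R[[x,y,z]][t] \hookrightarrow A[[z]][t]$ is injective, the identity descends to $R[[x,y,z]][t]$ as required.

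The main conceptual point is simply that $\Theta_t$ interacts transparently with substitution, because it sends formal roots to formal roots; this is essentially built into the Remark after Definition \ref{df:substitution}. I do not expect any real obstacle: there are no axioms of an FTL to re-verify on morphisms (the axioms are conditions on objects, handled already by Lemma \ref{lem:2FGLto4FTL}), and the $\ZZe$-linearity of the underlying ring map is trivial since $\varphi = \mathrm{Id}_R$ here. The proposition therefore reduces cleanly to the substitution-compatibility computation outlined above, and functoriality follows formally.
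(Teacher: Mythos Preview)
Your argument is correct and follows essentially the same route as the paper's proof: both split $F_t(x,y)$ into formal roots $F^{[1]}, F^{[2]}$, apply the morphism property $\Theta_t(F_t(u,v)) = G_t(\Theta(u),\Theta(v))$ twice (once on each factor $F_t(F^{[i]},z)$ and once to identify $\Theta(F^{[i]})$ as the roots of $G_t(\Theta(x),\Theta(y))$), and then reassemble via the substitution formula. Your version is slightly more explicit about the splitting extension $A$ and the descent back to $R[[x,y,z]][t]$, but the underlying computation is identical to the paper's chain of equalities.
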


\begin{proof}
Let $F$ and $G$ be two valued formal groups and let $\Theta$ be a composable power series such that 
\[
\Theta_t(F_t(x,y))=G_t(\Theta(x),\Theta(y)).
\] 
We then have
\begin{eqnarray*}
C_t(G)(\Theta(x),\Theta(y),\Theta(z)) & = & G_t(G_t(\Theta(x),\Theta(y)),\Theta(z)) \\
& = & G_t(\Theta_t(F_t(x,y)),\Theta(z)) \\
& = & G_t(\Theta(F^{+}),\Theta(z))G_t(\Theta(F^{-}),\Theta(z)) \\
& = & \Theta_t(F(F^+,z))\Theta_t(F(F^-,z)) \\
& = & \Theta_t(F_t(x,y),z).
\end{eqnarray*}
\end{proof}

To conclude this section, we compute the composite functor
\[
W:\FGL \xrightarrow{N} (2-\FGL)_{I} \xrightarrow{C} \FTL
\]
by making the roots of $W(F)$ explicit. Let $R$ be a ring and let $F(x,y)$ be a commutative formal group law. In view of Remark \ref{rem:rootsN}, we may choose the roots of $N_t(F)$ to be $F^{[1]}=-F(x,y)F(\bar x,\bar y)=-F(x,y)\overline{F(x,y)}$ and $F^{[2]}=-F(\bar x,y)F(x,\bar y)=-F(\bar x,y)\overline{F(\bar x,y)}$ in $R[[x,y]]$. 

\begin{lm}
Under the finite ring extension $R[[-x\bar x,-y\bar y,-z\bar z]]\subset R[[x,y,z]]$, the roots of $W(F)$ are given by 
\[
W(F)^{[i]}=\begin{cases} 
-F(F(x,y),z)F(F(\bar x,\bar y),\bar z) & \text{ if $i=1$.} \\
-F(F(\bar x,y),z)F(F(x,\bar y),\bar z) & \text{ if $i=2$.} \\
-F(F(x,\bar y),z)F(F(\bar x,y),\bar z) & \text{ if $i=3$.} \\
-F(F(\bar x,\bar y),z)F(F(x,y),\bar z) & \text{ if $i=4$.}
\end{cases}
\]
\end{lm}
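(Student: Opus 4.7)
The plan is to unfold $W(F)_t = C(N(F))_t = N(F)_t\bigl(N(F)_t(x,y),z\bigr)$ via two successive applications of the splitting for $N(F)_t$ furnished by Lemma~\ref{lem:unique22}. Writing $\Phi_t := N(F)_t$ and using the substitution principle for composable multivalued series (see the remark right after \Cref{df:substitution}), we have
\[
\Phi_t\bigl(\Phi_t(X,Y),Z\bigr) \;=\; \Phi_t(\Phi^{[1]},Z)\cdot \Phi_t(\Phi^{[2]},Z),
\]
where $\Phi^{[1]},\Phi^{[2]}$ denote the two roots of $\Phi_t(X,Y)$ at the specialisation $X=-x\bar x,\ Y=-y\bar y$. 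By Remark~\ref{rem:rootsN} we can take
\[
\Phi^{[1]}=-F(x,y)F(\bar x,\bar y),\qquad \Phi^{[2]}=-F(\bar x,y)F(x,\bar y).
\]

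The decisive observation is that each $\Phi^{[i]}$ is itself of the ``norm'' shape $-w\bar w$ to which Lemma~\ref{lem:unique22} applies. Indeed, setting $u:=F(x,y)$ and $v:=F(\bar x,y)$, the automorphism identity $\overline{F(a,b)}=F(\bar a,\bar b)$ noted just before Lemma~\ref{lem:inverseseries} gives $\bar u=F(\bar x,\bar y)$ and $\bar v=F(x,\bar y)$. Hence $\Phi^{[1]}=-u\bar u$ and $\Phi^{[2]}=-v\bar v$ as elements of $R[[x,y]]\subset R[[x,y,z]]$.

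We may now apply Lemma~\ref{lem:unique22} a second time, at the pairs $(-u\bar u,-z\bar z)$ and $(-v\bar v,-z\bar z)$, which yields
\[
\Phi_t(-u\bar u,-z\bar z)=\bigl(1-F(u,z)F(\bar u,\bar z)t\bigr)\bigl(1-F(\bar u,z)F(u,\bar z)t\bigr),
\]
and the analogous factorisation for $v$. Substituting back the values $u,\bar u,v,\bar v$ and collecting the four linear factors gives a factorisation of $W(F)_t(-x\bar x,-y\bar y,-z\bar z)$ in $R[[x,y,z]][t]$ whose roots, in the sign convention $F_t=\prod(1+F^{[i]}t)$ of \Cref{df:roots_nd-series}, are precisely the four expressions listed in the statement.

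The whole argument is little more than careful bookkeeping; the only real input beyond the substitution formalism is the automorphism identity $\overline{F(x,y)}=F(\bar x,\bar y)$, which is exactly what brings each $\Phi^{[i]}$ into the normic form required by Lemma~\ref{lem:unique22}. Since $\bar x,\bar y,\bar z\in R[[x,y,z]]$, the resulting roots indeed live in $R[[x,y,z]]$, so the extension statement of the lemma is automatic.
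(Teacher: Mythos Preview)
Your argument is correct and is essentially the same as the paper's: both specialise $W(F)_t$ at $(-x\bar x,-y\bar y,-z\bar z)$, use Remark~\ref{rem:rootsN} to identify the inner roots $\Phi^{[1]}=-u\bar u$ and $\Phi^{[2]}=-v\bar v$ with $u=F(x,y)$, $v=F(\bar x,y)$, and then invoke the defining identity of Lemma~\ref{lem:unique22} a second time (the paper does this by writing out $N_1(F)(\Phi^{[i]},-z\bar z)$ and $N_2(F)(\Phi^{[i]},-z\bar z)$ explicitly, you by citing the factored form directly). The only ingredient beyond bookkeeping is indeed $\overline{F(x,y)}=F(\bar x,\bar y)$, which is exactly what both proofs use.
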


\begin{proof}
By construction, we have
\begin{eqnarray*}
N_1(F)(F^{[1]},-z\bar z) & = & N_1(F)(-F(x,y)\overline{F(x,y)},-z\bar z)  \\
& = & -F(F(x,y),z)F(F(\bar x,\bar y),\bar z) -F(F(\bar x,\bar y),z)F(F(x,y),\bar z)
\end{eqnarray*}
and
\[
N_2(F)(F^{[1]},-z\bar z)=F(F(x,y),z)F(F(\bar x,\bar y),\bar z)F(F(\bar x,\bar y),z)F(F(x,y),\bar z)
\] 
showing that $ -F(F(x,y),z)F(F(\bar x,\bar y),\bar z)$ and $-F(F(\bar x,\bar y),z)F(F(x,y),\bar z)$ are roots. We may compute $N_1(F)(F^{[2]},-z\bar z)$ and $N_2(F)(F^{[2]},-z\bar z)$ in a similar fashion and then deduce that the roots of 
\[
(1+N_1(F)(F^{[1]},-z\bar z)t+N_2(F)(F^{[1]},-z\bar z)t^2)(1+N_1(F)(F^{[2]},-z\bar z)t+N_2(F)(F^{[2]},-z\bar z)t^2)
\]
are as claimed.
\end{proof}

\section{Example: symplectic oriented ring spectra}\label{num:geometricFTL}\label{sec:geometry}

The first two authors have shown that Borel classes for $\SL$-oriented motivic spectra yield formal ternary laws, see \cite[section 2]{DF21}. We now explain how this result may be extended to $\Sp$-oriented theories.

Let $\E$ be a motivic ring spectrum, that is a (weakly) commutative monoid in the stable
$\AA^1$-homotopy category over $S$, where is is a noetherian scheme of finite Krull dimension. The motivic ring spectrum $\MSp$ has been introduced and studied by Panin and Walter \cite{PWcobord}. They defined and investigated {\em symplectic oriented} cohomology theories and ring spectra (or $\Sp$-oriented for short). The key result is \cite[Theorem 1.1]{PWcobord}, see also \cite[section 3]{An20} and the summary provided in \cite[section 2.1]{DF21}. 

\begin{rem}\label{rem:topologicalMsp}
The definition of the classical topological spectrum $MSp$ is well-known, see e.g. Switzer's book \cite{Sw75}. The graded ring $\pi_*(MSp) \otimes \ZZ[1/2]$ is isomorphic to the (known) polynomial algebra $MSO^*\otimes \ZZ[1/2]$ by work of Novikov, but integrally $\pi_*(MSp)$ is known only in degrees $\leq 100$ in which the only torsion is $2$-torsion. The first element of $4$-torsion appears in degree $103$. See e.g. \cite{St67}, \cite[Remark 20.36]{Sw75}, \cite[Theorem 5.25]{Ray72} and \cite[Section 8]{Kochman93}.
Over a general base field the bigraded motivic homotopy groups $\pi_{**}(\MSp)$ are not known, either, not even after tensoring with $\ZZ[1/2]$, but $\pi_{**}(\MSp)[\eta^{-1}]$ is known by the recent \cite[Corollary 1.3(2)]{BH} over fields, later extended to Dedekind domains by Bachmann \cite{Bachmann21}. One may show that $\MSp$ maps to $MSp^{top}$ under complex realization. We could not find a proof for this in the literature. One should use that $Sp_{2n}(\CC)^{an}$ has $Sp(n)$ as maximal compact subgroup, that Thom spaces realize to Thom spaces and that $Sp_{2n}$ is ``special'' in the sense of Serre and others. The following results also hold for symplectic oriented topological commutative ring spectra, for which we may set $\epsilon=-1$ everywhere.   
\end{rem}

The following result of the third author is crucial. Its proof will be published elsewhere.
\begin{thm}[Fasel]\label{thm:epsilonlinearity}
Let $R$ be a ring, let $U$ be the universal bundle on $\mathrm{BSp}_{2,R}$ and let $\lambda\in R^\times$ be an invertible element. Let further $\mathrm{th}(U,\varphi):\mathrm{Th}(U)\to \mathbf{MSp}(2)[4]$ be the canonical Thom class, where $\varphi:U\to U^\vee$ is the canonical symplectic form. Then
\[
\mathrm{th}(U,\langle\lambda\rangle \varphi)=\langle \lambda\rangle \mathrm{th}(U,\varphi).
\]

\end{thm}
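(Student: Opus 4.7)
The plan is to reduce \Cref{thm:epsilonlinearity} to the case of the trivial rank-$2$ symplectic bundle, where the scaling of the form can be realised by an explicit symplectic automorphism whose induced endomorphism on the Thom space is $\langle\lambda\rangle$ by Morel's degree formula.

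First, using the Thom isomorphism for $(U,\varphi)$, I would write $\mathrm{th}(U,\langle\lambda\rangle\varphi) = x_\lambda \cdot \mathrm{th}(U,\varphi)$ for a unique element $x_\lambda \in \mathbf{MSp}^{0,0}(\BSp_{2,R})$. By the projective bundle formula of Panin-Walter applied to $\BSp_{2,R} \simeq \HP^\infty_R$, one has $\mathbf{MSp}^{*,*}(\BSp_{2,R}) = \mathbf{MSp}^{*,*}(R)[[b_1]]$ with $b_1$ of bidegree $(4,2)$, so in particular $\mathbf{MSp}^{0,0}(\BSp_{2,R}) = \mathbf{MSp}^{0,0}(R)$. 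Hence $x_\lambda$ is a constant, and it suffices to identify it by pulling back along the classifying map of the trivial symplectic bundle $(\OO^2,\varphi_0)$ over $\mathrm{Spec}(R)$, with $\varphi_0 = \bigl(\begin{smallmatrix} 0 & 1 \\ -1 & 0\end{smallmatrix}\bigr)$.

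Next, on this trivial bundle the endomorphism $M_\lambda = \bigl(\begin{smallmatrix} 1 & 0 \\ 0 & \lambda\end{smallmatrix}\bigr)$ of $\OO^2$ satisfies $M_\lambda^t\,\varphi_0\,M_\lambda = \lambda\varphi_0$, so it defines a symplectic isomorphism $(\OO^2,\lambda\varphi_0) \xrightarrow{\sim} (\OO^2,\varphi_0)$. Naturality of canonical Thom classes under symplectic isomorphisms gives $\mathrm{th}(\OO^2,\lambda\varphi_0) = M_\lambda^*\,\mathrm{th}(\OO^2,\varphi_0)$. The induced endomorphism of $\mathrm{Th}(\OO^2) \simeq \PP^1_R \wedge \PP^1_R$ identifies with $\mathrm{id}\wedge [\lambda]$, where $[\lambda]\colon \PP^1 \to \PP^1$ is induced by multiplication by $\lambda$ on $\GG$. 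By Morel's degree formula, $[\lambda]$ acts on the sphere as $\langle\lambda\rangle \in \GW(R)\hookrightarrow \mathbf{MSp}^{0,0}(R)$, and hence so does $\mathrm{id}\wedge[\lambda]$ on the double suspension. This yields $x_\lambda = \langle\lambda\rangle$, which combined with the first step concludes.

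The main technical point will be the use of the projective bundle formula in step one to extract $x_\lambda$ as a scalar in $\mathbf{MSp}^{0,0}(R)$; while this is natural for an $\Sp$-oriented theory, one needs to check it applies in the generality of an arbitrary base ring $R$, including the appropriate form of $\AA^1$-connectedness for $\HP^\infty_R$ relative to $\mathrm{Spec}(R)$. A cleaner alternative, possibly closer to the announced proof, is to construct the scaling automorphism globally using the similitude torus $\mathrm{GSp}_2/\Sp_2 \simeq \GG$ acting on $\BSp_{2,R}$ and on the universal bundle, and to analyse the resulting action on $\mathrm{Th}(U)$ directly in an equivariant framework; this bypasses the trivialisation step entirely at the cost of additional $\GG$-equivariant bookkeeping.
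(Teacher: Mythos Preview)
The paper does not actually prove this theorem: immediately before the statement it says ``Its proof will be published elsewhere.'' So there is no argument in the paper to compare your proposal against.

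On the substance: your local computation in step two (trivial bundle, $M_\lambda$, Morel's degree formula) is correct and is the expected endgame. The problem is step one. Your claim that $\mathbf{MSp}^{0,0}(\BSp_{2,R})=\mathbf{MSp}^{0,0}(R)$ is simply false. The projective bundle formula gives
\[
\mathbf{MSp}^{0,0}(\HP^\infty_R)\;=\;\prod_{n\ge 0}\mathbf{MSp}^{-4n,-2n}(R)\cdot b_1^n,
\]
and the coefficient groups $\mathbf{MSp}^{-4n,-2n}(R)$ are highly nontrivial for $n\ge 1$ (this is exactly the diagonal $\mathbf{MSp}^{2*,*}$ the paper is interested in, cf.\ \eqref{eqn:complexrealization}). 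Your $x_\lambda$ is therefore a power series in $b_1$, not a scalar. Pulling back along the classifying map of the trivial bundle sends $b_1\mapsto 0$, so it only detects the constant term of $x_\lambda$; the higher coefficients are invisible at a point. As written, your argument proves only that $\mathrm{th}(U,\langle\lambda\rangle\varphi)\equiv \langle\lambda\rangle\,\mathrm{th}(U,\varphi)$ modulo~$b_1$, which is strictly weaker than the theorem. You flagged this step as ``the main technical point'', but you misdiagnosed the difficulty: it is not whether the projective bundle formula holds over a general base, it is that the formula does not say what you need.

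Closing this gap requires an argument that is genuinely global over $\BSp_{2,R}$, not a restriction to one fibre. Your alternative via the similitude group $\mathrm{GSp}_2$ is much more promising and is likely closer in spirit to the intended proof (note the acknowledgement to Brion and Dubouloz, suggesting input from the theory of algebraic groups). One wants to realise $\varphi\mapsto\lambda\varphi$ by an automorphism of the pair $(\BSp_{2,R},U)$ coming from a similitude, and then analyse its effect on $\Th(U)$ directly; but this is real work, not bookkeeping, and your proposal does not yet contain the key idea for it.
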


\begin{cor}
For any $\Sp$-oriented motivic ring spectrum $\E$, the Borel classes yield a formal ternary law as in Definition \ref{df:FTL}.
\end{cor}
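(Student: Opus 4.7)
The plan is to define $F_t(x,y,z) := b_t(U_1\otimes U_2\otimes U_3)$ as the total Borel class of the triple tensor product of the universal rank-$2$ symplectic bundles on $\BSp_2^{\times 3}$, and then verify each of the five axioms of Definition \ref{df:FTL} one by one. The identification $\E^{*,*}(\BSp_2^{\times 3}) \simeq \E^{*,*}[[x,y,z]]$, with $x,y,z$ the first Borel classes of the three factors, follows from the Sp-orientation via the projective bundle theorem for $\HP^\infty$ (Panin--Walter). Because the triple tensor product of symplectic bundles is again symplectic, of rank $8$, the total Borel class is a polynomial of degree $4$ in $t$ with constant term $1$, so $F_t(x,y,z)$ is a genuine element of $\MForm_{4,3}(\E^{*,*})$.

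The symmetry axiom (2) is immediate from the commutativity of the tensor product. The associativity axiom (3) proceeds along the lines of \cite[Prop. 2.3.4]{DF21}: over a finite extension of $\E^{*,*}[[x,y,z]]$ one applies the symplectic splitting principle to write $U_1\otimes U_2 \otimes U_3 = V_1 \oplus V_2 \oplus V_3 \oplus V_4$ as a sum of rank-$2$ symplectic summands with formal roots $F^{[i]} = b_1(V_i)$, after which both sides of the associativity equation compute the Borel classes of the five-fold tensor product $(U_1\otimes U_2\otimes U_3)\otimes U_4\otimes U_5$ via the Whitney sum formula, yielding $\prod_i F_t(F^{[i]},u,v)$. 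For the weak neutral element axiom (5), the specialization $y=x$, $z=0$ corresponds to pulling back along the diagonal of the first two factors composed with the basepoint inclusion in the third, giving $U\otimes U\otimes \mathbf{1}_2$, where $\mathbf{1}_2$ denotes the trivial rank-$2$ symplectic bundle; using the standard decomposition $U\otimes U \simeq \mathcal{O} \oplus \mathrm{Sym}^2(U)$ (where the $\Lambda^2$ summand is trivial because $U$ is symplectic of rank $2$), one obtains a trivial rank-$2$ symplectic summand after tensoring with $\mathbf{1}_2$, and the Whitney sum formula then forces the top Borel class $b_4$ to vanish.

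The neutral element axiom (1) reduces to the computation $F_t(x,0,0) = b_t(U_1\otimes \mathbf{1}_2\otimes \mathbf{1}_2)$. The key geometric input is that the tensor product of two rank-$2$ symplectic forms is an orthogonal form of rank $4$ which decomposes as $\mathbf{1}_2 \otimes \mathbf{1}_2 \simeq \mathcal{O}_{\langle 1\rangle}^{\oplus 2} \oplus \mathcal{O}_{\langle -1\rangle}^{\oplus 2}$ (a direct sum of two orthogonal hyperbolic planes). Tensoring back with $U_1$ converts each orthogonal rank-$1$ summand of discriminant $\lambda$ into a twisted symplectic rank-$2$ bundle $U_1^{\langle \lambda\rangle}$, so that $U_1\otimes \mathbf{1}_2\otimes \mathbf{1}_2 \simeq U_1^{\oplus 2}\oplus (U_1^{\langle -1\rangle})^{\oplus 2}$ as symplectic bundles. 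Applying Theorem \ref{thm:epsilonlinearity} (for $\lambda=-1$, so $\langle\lambda\rangle=-\epsilon$) yields $b_1(U_1^{\langle -1\rangle}) = -\epsilon\cdot x$, and the Whitney sum formula then gives $b_t(U_1\otimes\mathbf{1}_2\otimes\mathbf{1}_2) = (1+xt)^2(1-\epsilon xt)^2$ as required.

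Finally, the $\epsilon$-linearity axiom (4) is the genuinely new ingredient of the Sp-oriented (as opposed to SL-oriented) case, and its proof is a direct application of Theorem \ref{thm:epsilonlinearity}. The substitution $x\mapsto -\epsilon x$ corresponds to replacing $U_1$ by $U_1^{\langle -1\rangle}$, and one has $U_1^{\langle -1\rangle}\otimes U_2\otimes U_3 \simeq (U_1\otimes U_2\otimes U_3)^{\langle -1\rangle}$ since scaling one tensor factor of a symplectic form scales the whole form. Splitting the right-hand side and applying Theorem \ref{thm:epsilonlinearity} termwise gives $b_i((U_1\otimes U_2\otimes U_3)^{\langle -1\rangle}) = (-\epsilon)^i\, b_i(U_1\otimes U_2\otimes U_3)$, whence $F_t(-\epsilon x,y,z) = \sum_i b_i\,(-\epsilon)^i t^i = F_{-\epsilon t}(x,y,z)$. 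This axiom is precisely the point where the present treatment genuinely surpasses \cite{DF21}: the sharper control over symplectic Thom classes under scaling of the symplectic form provided by Theorem \ref{thm:epsilonlinearity} is what makes the $\epsilon$-linearity axiom accessible in the Sp-oriented setting, and is thus the main technical obstacle of the corollary.
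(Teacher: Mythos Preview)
Your proposal is essentially correct and follows the same overall strategy as the paper: the key new input is Theorem~\ref{thm:epsilonlinearity} for $\lambda=-1$, which yields $\epsilon$-linearity, and the remaining axioms are then handled as in \cite[Proposition~2.3.4]{DF21}.

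There are two small differences worth noting. First, the paper's proof establishes $\epsilon$-linearity for $\MSp$ only (since Theorem~\ref{thm:epsilonlinearity} is stated for the universal bundle on $\BSp_2$), and then invokes the universality of $\MSp$ among $\Sp$-oriented theories \cite[Theorem~1.1]{PWcobord} to pass to a general $\E$; you apply the theorem directly to $\E$, which is fine in spirit but strictly speaking requires this extra sentence. Second, for the neutral element axiom the paper simply cites \cite[Proposition~2.3.4]{DF21} (where it is deduced once $\epsilon$-linearity is in hand), whereas you give an independent direct argument via the orthogonal decomposition $\mathbf{1}_2\otimes\mathbf{1}_2\simeq\langle 1\rangle^{\oplus 2}\oplus\langle -1\rangle^{\oplus 2}$. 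This diagonalization of the hyperbolic form is only valid when $2$ is invertible on the base; over a general $S$ one has $\mathbf{1}_2\otimes\mathbf{1}_2\simeq H\perp H$ (orthogonal hyperbolic planes), which need not split diagonally. The paper's route via $\epsilon$-linearity first, then \cite{DF21}, sidesteps this issue and is therefore slightly more robust.
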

\begin{proof}
Commutativity and associativity are obvious. The above theorem for $\lambda=-1$ implies $\epsilon$-linearity for $\MSp$, hence by \cite[Theorem 1.1]{PWcobord} for all $\Sp$-oriented ring spectra. Now the remaining two axioms follow as in \cite[Proposition 2.3.4]{DF21}. 
\end{proof}

\appendix
\section{Sage implementation of FTL computations}\label{num:sage}
We describe here the algorithm used to compute a standard (Groebner) basis of the ideal $\mathcal R_{FTL}$ (see Proposition \ref{prop:universal_FTL}) of and its implementation using the Sagemath \cite{sagemath} software together with the libsingular \cite{singular} back-end for multivariate polynomial computations. 

\subsection{Main problem characteristics and design principle}
The inputs of the algorithm are a base numerical ring $R$ (among $\mathbb{Z},\mathbb{Q}, \mathbb{Z}/n\mathbb{Z}$), and a degree range $(d_{min}, d_{max}) \in \mathbb{Z}^2, -4 \le d_{min} \le d_{max}$. With those restrictions, the unknown FTL coefficients span a finite set $a =  \lbrace a^{l}_{ijk}, d_{min} \le deg(a^{l}_{ijk}) \le d_{max}, 1,  \le l \le 4 \rbrace $. The five axioms stated in Definition \ref{df:FTL} induce relations between the FTL coefficients in the polynomial ring $R[\epsilon, a, \alpha]$, where $\alpha$ stands for one or more optional formal variables. For technical reasons this is more convenient than to work explicitly in $R_{\epsilon}[a]$. Additional constraints can be appended to the relation set in order to obtain the desired quotient ring, (for instance $\epsilon^2 -1$ for $R_{\epsilon}[a]$ or  $2 \alpha -1 = 0$ for  $R[\epsilon,a][\frac{1}{2}]$). 
The overall purpose of the algorithm is first to compute the finite set $S_{FTL} = \cup S_{axiom} = \lbrace r_p, 1\le p \le N , r_p \in R[\epsilon, a] \rbrace $ of relations stemming from the five FTL axioms and then compute a standard basis $g_{FTL}$ of the ideal generated by $S_{FTL}$. Noting $Gb$ the algorithm used for Groebner basis computation, a direct resolution scheme would simply read $g_{FTL} \leftarrow Gb(\mathcal{I}(S_{FTL}))$. \\

Unlike symmetry, (weak) neutral element, and $\epsilon$-linearity axioms, the set of relations induced by the associativity axiom tends to be quite large (hundreds to hundreds of thousands), with total degrees up to a few tens even for relatively small values ($1$ or $2$) of $d_{max}$. 
Even using a sparse storage scheme, the memory footprint of the full relations set can easily grow to tens of gigabytes. A second issue is the Groebner basis computation, which can generate numerous intermediate $S$-polynomials with huge coefficients. Either of memory and processing time costs can easily saturate available resources, even on "fat" computing nodes with hundreds of gigabytes of memory.

\subsection{Description of the implemented scheme}
The resolution scheme presented hereafter is designed to mitigate those technical difficulties and allow to solve the problem for a limited range of degrees (up to $d_{max}=2$ for some special cases).
Two optimization strategies are used to limit resources consumption. The first is to express the problem in the most compact manner to avoid representation overhead costs. The second is divide-and-conquer, i.e. split the whole problem into a sequence of smaller ones. \\

A first representation optimization stems from the symmetry axiom, which is the easiest to deal with. The symmetry relation set is fully generated by simple relations of the form $a^{l}_{ijk} - a^{l}_{\sigma(ijk)} = 0$, with $\sigma \in S_3$. For any fixed $l$, FTL coefficients with the same $l$ value and whose $(i',j',k')$ indexes share a common orbit under $S_3$ can be replaced by a single coefficient with indices $(i_0,j_0,k_0)$, which amounts to simply removing them and replacing the monomial $X^i_0 Y^j_0Z^k_0$ by the corresponding monomial symmetric polynomial in the expression of $F_t$. We note $\overline{a}$ this reduced set of FTL coefficients. Working with relations in $R[\epsilon, \overline{a}]$ with this form for $F$, the symmetry axiom is structurally enforced, with a significant reduction in the number of variables. 
\\
Problem splitting is obtained by partitioning $S_{FTL}$ into $K$ small subsets $S_{FTL} = \cup_{p \le K} s_p$. In order to avoid storing the full set of relations, the scheme operates in an iterative manner on an ideal sequence $\mathcal{I}_p$, defined by its generating set $g_p$, with initial value $g_0 = \emptyset$.  A each step $ p \le K$ of the algorithm, the small subset $s_p$ of the relations is generated on the fly. In the current implementation, $s_p$ is the full set of relations for all axioms except associativity, and a single relation for each of the associativity axiom relations. \\ 
A reduced Groebner basis of $\mathcal{I}(g_p \cup s_p)$ is then computed and used to replace $g$. A single  iteration of the scheme at step $p$ reads thus $g_{p+1} \leftarrow Gb(\mathcal{I}(g_{p} \cup s_p))$. Obviously, this scheme is more efficient memory-wise than the naive scheme provided that the intermediate generators sets $g$ size and complexity stay lower than the one of the full set of relations. While this is true for the final generating set $g_{K}$ ($S_{FTL}$ has a lot of redundancy), this is not guaranteed for the intermediate ones, whose complexity depend both on the way $S_{FTL}$ is partitioned, and the underlying Groebner basis algorithm. From a computation cost point of view, the scheme is obviously not optimally efficient : indeed the GB algorithm is only providing partial information about the ideal at each step, preventing it from operating optimally and leading to quite a lot of redundant computations. In practice, in many cases the intermediate $g_p$ complexity stays rather low, is quite often  invariant $g_{p+1}=g_p$ due to the large redundancy of associativity relations.\\

Based on this observation, a slight variation of the iteration scheme has also been implemented : after generation of $s_p$, $s_p$ is reduced by $\mathcal{I}$ before attempting computation of the Groebner basis. The scheme thus reads $g_{p+1} \leftarrow Gb(\mathcal{I}(g_{p} \cup s_p /\mathcal{I}(g_p)))$. In cases when $g_{p+1} = g_{p}$, $s_p$ actually  reduces to $0$ so that there is no need for an additional Groebner basis computation. Though this pre-reduction step is formally redundant, it appears empirically to ease the work of the Groebner algorithm when working over $\mathbb{Z}$ and limit coefficient swelling.   
\\
A the end of each step, a second use of the representation sparsity optimization is then performed. Once the new set of generators $g_{p+1}$ is computed, trivial relations of the form $r_u = a^{l}_{ijk} - c = 0, c \in R[\epsilon]$ are searched for. Whenever such a relation is found, the corresponding FTL coefficient is replaced by the unique solution $c$, and $a^{l}_{ijk}$ removed from the pool of active variables of the working ring. While this is formally completely equivalent to keeping $r_u$ in the generating set $g_p$, it has a non-negligible impact on memory consumption and computation complexity. A first guaranteed reduction is due to the fact that the number of active variables drives the size of all multiindexes representing monomial exponents in the polynomial representation of elements of $R[\epsilon,\overline{a}]$). This implies memory savings proportional to the number of variables that can be removed that way, at the risk of a slight increase in the complexity of the coefficients. Second, in practice, the solutions $c$ are usually fairly simple and often zero, leading in a drastic reduction of the complexity of generating relations in the restricted polynomial ring. Most such substitutions stem from the neutral, weak neutral element and linearity axioms, which are treated first in the resolution scheme, and occur early in the computation.\\

\subsection{Short description of the code repository}
The the core algorithm and data structures are regrouped in a single Python module (ftl\_comp.py) meant to be imported and used from Sage. Basic usage examples are provided in the form of Jupyter notebooks for light computations and post-computation work using saved results, and batch python scripts for non-interactive heavier computations. A selection of computations results saved in json format is also provided.

\subsection{Sample results 1 : Computation of $\Walt^{\leq 0}[\frac 12]$. 
}\label{appA:sample}
The following results were obtained in the polynomial ring $\mathbb{Z}[\epsilon, a]$, with the additional relations $ 2 \alpha-1 = 0$ and $\epsilon^2-1 = 0$, which is equivalent to working in $\mathbb{Z}_{\epsilon}[\frac{1}{2}][a]$. For this range of degrees ($-4$ to $0$) there are $24$ FTL coefficients. 
For $23$ of those, fixed values solutions were found, 
\begin{equation*}
\left\lbrace
\begin{array}{l}
a^{4}_{000} = a^{3}_{000} = a^{4}_{100} = a^{2}_{000} = a^{3}_{100} = a^{4}_{200} = a^{1}_{000} = 0 \\
a^{2}_{100} = a^{3}_{200} = a^{4}_{300} = a^{4}_{110} = a^{4}_{210} = a^{3}_{110} = a^{4}_{111} = 0 \\
a^{4}_{400} = 1 \\
a^{2}_{200} = a^{4}_{220} = -4 \epsilon + 2 \\
a^{3}_{210} = a^{4}_{310} = 2 \epsilon - 2 \\
a^{1}_{100} = a^{3}_{300} = a^{2}_{110} = a^{4}_{211} = -2 \epsilon + 2
\end{array}
\right.
,
\end{equation*}
and $a^{3}_{111}$ is the ony  non-fixed FTL coefficient, with  the remaining relations
\begin{equation*}
\left\lbrace
\begin{array}{l@{\ = \ }c}
(a^{3}_{111})^{2} + 768 \epsilon - 832 & 0 \\
(a^{3}_{111} - 40) (\epsilon - 1) & 0\\
\end{array}
\right.
.
\end{equation*}
With the same parameters, removing the $2$-invertibility assumption yields the solution for $\Walt^{\leq 0}$. In that case only $15$ FTL coefficients  are fixed, $9$ remain free, with $134$ remaining relations. They are too numerous to list here, but can be recomputed or reloaded from the saved runs provided in the code repository: 

\url{https://plmlab.math.cnrs.fr/coulette/ftlcomp}. 

\subsection{Example 2 : Computation of $\Walt^{\leq 2}[\frac{1}{2}] / (\epsilon-1)$}\label{sec:2inverted}
While the computation of $\Walt^{\leq 2}$ and $\Walt^{\leq 2}[\frac{1}{2}]$ have proven untractable so far, a fast and quite simple solution can be obtained when $\epsilon = 1$. In that case $52$ of the $57$ coefficients have fixed values
\begin{equation*}
\left\lbrace
\begin{array}{l}
a^{4}_{000} = a^{3}_{000} = a^{4}_{100} = a^{2}_{000} = a^{3}_{100} = a^{4}_{200} = a^{1}_{000} = 0 \\
a^{2}_{100} = a^{3}_{200} = a^{4}_{300} = a^{1}_{100} = a^{3}_{300} = a^{1}_{200} = a^{2}_{300} = 0 \\
a^{3}_{400} = a^{4}_{500} = a^{1}_{300} = a^{2}_{400} = a^{3}_{500} = a^{4}_{600} = a^{4}_{110} = 0 \\
a^{4}_{210} = a^{3}_{110} = a^{4}_{111} = a^{2}_{110} = a^{3}_{210} = a^{4}_{211} = a^{4}_{310} = 0 \\
a^{1}_{110} = a^{2}_{111} = a^{2}_{210} = a^{3}_{211} = a^{3}_{220} = a^{3}_{310} = a^{4}_{221} = 0 \\
a^{4}_{311} = a^{4}_{320} = a^{4}_{410} = a^{1}_{210} = a^{2}_{211} = a^{2}_{310} = a^{3}_{221} = 0 \\
a^{3}_{320} = a^{3}_{410} = a^{4}_{321} = a^{4}_{330} = a^{4}_{411} = a^{4}_{420} = a^{4}_{510} = 0 \\
a^{4}_{400} = 1 \\
a^{2}_{200} = a^{4}_{220} = -2
\end{array}
\right.,
\end{equation*}
with the five coefficients $a^{3}_{111}, a^{1}_{111}, a^{2}_{220},a^{3}_{311}, a^{4}_{222}$ and the remaining constraints
\begin{equation*}
\left\lbrace
\begin{array}{l@{ = \quad} l}
(a^{3}_{111})^{2} - 64& 0  \\
a^{3}_{111} a^{3}_{311} + 8 a^{4}_{222}& 0  \\
(a^{3}_{311})^{2} - (a^{4}_{222})^{2}& 0  \\
a^{3}_{111} a^{4}_{222} + 8 a^{3}_{311}& 0  \\
a^{1}_{111} - a^{3}_{311}& 0  \\
a^{2}_{220} - a^{4}_{222}& 0  \\
\end{array}
\right.
\end{equation*}

\bibliographystyle{amsalpha}
\bibliography{FTL2FGL}

\end{document}